\newcommand{\C}{\mathbb{C}}
\newcommand{\R}{\mathbb{R}}
\newcommand{\Q}{\mathbb{Q}}
\newcommand{\Z}{\mathbb{Z}}
\newcommand{\F}{\mathbb{F}}
\newcommand{\X}{\mathcal{X}}
\newcommand{\OO}{\mathcal{O}}
\newcommand{\SL}{\mathit{SL}}
\newcommand{\PSL}{\mathit{PSL}}
\newcommand{\tr}{\operatorname{tr}}
\newcommand{\rank}{\operatorname{rank}}
\newcommand{\irr}{\mathrm{irr}}
\newcommand{\orb}{\mathrm{orb}}
\newcommand{\ab}{\mathrm{ab}}
\newcommand{\id}{\mathrm{id}}
\newcommand{\Int}{\operatorname{Int}}
\newcommand{\Fix}{\mathrm{Fix}}
\newcommand{\cox}{\mathrm{cox}}
\newcommand{\col}{\mathrm{col}}
\newcommand{\ang}[1]{\langle#1\rangle}
\newcommand{\aang}[1]{\langle\!\langle#1\rangle\!\rangle}
\newtheorem{theorem}{Theorem}[section]
\newtheorem{proposition}[theorem]{Proposition}
\newtheorem{corollary}[theorem]{Corollary}
\newtheorem{lemma}[theorem]{Lemma}
\newtheorem{claim}[theorem]{Claim}
\newtheorem{question}[theorem]{Question}
\theoremstyle{definition}
\newtheorem{definition}[theorem]{Definition}
\theoremstyle{remark}
\newtheorem{remark}[theorem]{Remark}
\begin{document}

\title{A preorder on the set of links with applications to symmetric unions}

\author{Michel Boileau}
\address{Aix Marseille Univ, CNRS, I2M, 3 Pl. Victor Hugo, 13003 Marseille, France}
\email{michel.boileau@univ-amu.fr}

\author{Teruaki Kitano}
\address{Department of Information Systems Science, Faculty of Science and Engineering, Soka University \\
Tangi-cho 1-236, Hachioji, Tokyo 192-8577 \\
Japan}
\email{kitano@soka.ac.jp}

\author{Yuta Nozaki}
\address{
Department of Mathematics, Faculty of Science, Hokkaido University \\
Kita-Ku, Sapporo 060-0810 \\
Japan\vspace{-0.6em}}
\address{
International Institute for Sustainability with Knotted Chiral Meta Matter (WPI-SKCM$^2$), Hiroshima University \\
1-3-1 Kagamiyama, Higashi-Hiroshima, Hiroshima 739-8531 \\
Japan}
\email{nozaki@math.sci.hokudai.ac.jp}

\subjclass[2020]{Primary 57K10, 57R18, Secondary 57K32, 57M12}

\keywords{$\pi$-orbifold group, symmetric union, link group, Montesinos link, small link, triangle group, $2$-fold branched cover}

\maketitle

\begin{abstract}
For a link $L$ in the $3$-sphere, the $\pi$-orbifold group $G^\mathrm{orb}(L)$ is defined as a quotient of the link group $G(L)$ of $L$.
When there exists an epimorphism $G^\mathrm{orb}(L)\to G^\mathrm{orb}(L')$ fitting into a certain commutative diagram, we define a relation $L\succeq L'$ and explore the relationships between the two links.
Specifically, we prove that if $L\succeq L'$ and $L$ is a Montesinos link with $r$ rational tangles $(r\geq 3)$, then $L'$ is either a Montesinos link with at most $r+1$ rational tangles or a certain connected sum. 
We further show that if $L$ is a small link, then there are only finitely many links $L'$ satisfying $L\succeq L'$.
In contrast, if $L$ has determinant zero, then $L\succeq L'$ for every $2$-bridge link $L'$.
Our main applications concern symmetric unions of knots. 
In particular, we provide a criterion showing that a given knot does not admit a symmetric union presentation.
\end{abstract}

\setcounter{tocdepth}{1}
\tableofcontents

\section{Introduction}
\label{sec:intro}

The goal of this paper is to study an order on the set of prime links (in the sense of \cite{Kaw96}, see Definition~\ref{def:prime} for the precise definition) with at least three bridges induced by epimorphisms between their $\pi$-orbifold groups. 
In Section~\ref{sec:symmetric_uinon}, we will give some applications to the study of symmetric unions of knots (see \cite{KiTe57}, \cite{Lam00}, \cite{BKN25}).

For a link $L$ in the $3$-sphere $S^3$, we write $E(L)$ for the exterior of $L$.
Let $G(L)$ denote the fundamental group of $E(L)$.
One can associate to a link $L$ the \emph{$\pi$-orbifold group} $G^\orb(L) = G(L)/N$, where $N$ is the subgroup of $G(L)$ normally generated by the squares of all meridians of $L$ (see \cite{BoZi89}). 
Here, a \emph{meridian} is an element of $G(L)$ which is represented by a curve that is freely homotopic to a meridional simple closed curve on $\partial E(L)$.
The group $G^\orb(L)$ is the orbifold fundamental group of the closed $3$-dimensional orbifold $\OO(L)$ with underlying space $S^3$ and singular locus $L$ with branching index $2$.
Let $\Sigma_2(L)$ denote the $2$-fold cover of $S^3$ branched along $L$.
Then the orbifold $\OO(L)$ is the quotient of the manifold $\Sigma_2(L)$ by the covering involution $\tau$.
Therefore, we have an exact sequence
\[
1 \to \pi_1(\Sigma_2(L)) \to G^\orb(L) \xrightarrow{\nu} \Z/2\Z \to 1,
\]
where the epimorphism $\nu$ sends the image of each meridian to the generator of $\Z/2\Z$. 
If $\Sigma_2(L)$ is aspherical, $\pi_1(\Sigma_2(L))$ is torsion-free and the only torsion elements in $G^{\orb}(L)$ come from the meridians, and hence they are of order $2$.

\subsection{A preorder defined via orbifolds}
\label{subsec:order}

Let $L$ and $L'$ be links in $S^3$.

\begin{definition}
\label{def:domination}
$L$ \emph{$\pi$-dominates} $L'$, written $L \succeq L'$, if there is an epimorphism $\varphi\colon G^\orb(L) \twoheadrightarrow G^\orb(L')$ fitting into the commutative diagram
\[
\xymatrix{
G^\orb(L) \ar@{->>}[r]^{\nu} \ar@{->>}[d]_-{\varphi} & \Z/2\Z \ar[d]^-{\id} \\
G^\orb(L') \ar@{->>}[r]^{\nu'} & \Z/2\Z.}
\]
\end{definition}

See Section~\ref{subsec:pi-domination} for basic properties of the $\pi$-domination.
It is worth noting that a symmetric union construction gives plenty of examples of pairs $(K, K')$ of knots such that $K \succeq K'$ (see Section~\ref{sec:symmetric_uinon}).
By \cite{BoPo01} and \cite[Theorem~3.1]{BoZi89}, the $\pi$-orbifold group $G^\orb(L)$ determines the pair $(S^3, L)$ up to homeomorphism when $L$ is a prime link with at least three bridges. 
The $3$-manifold group $\pi_1(\Sigma_2(L))$ is residually finite by \cite{Hem87} and of index $2$ in $G^\orb(L)$.
It follows that the group $G^\orb(L)$ is a (finitely generated) residually finite group, and hence hopfian.
Then the relation $\succeq$ is a partial order on the set of prime links with at least three bridges, up to mirror image. 
The reflexivity and transitivity follow immediately from the definition.
The anti-symmetry is not obvious, but it follows from the facts that $\pi$-orbifold groups are hopfian and that prime links with at least three bridges are determined by their $\pi$-orbifold groups.

In our first theorem, we consider various classes of links.

\begin{definition}
A link $L \subset S^3$ is called
\begin{enumerate}[label=(\arabic*)]
\item a \emph{Seifert link} if its exterior $E(L)$ admits a Seifert fibration by circles (see \cite{BuMu70});
\item a \emph{Montesinos link} with $r$ rational tangles if $\Sigma_2(L)$ is Seifert fibered with base $S^2$ and with $r$ exceptional fibers such that the fiber-preserving covering involution reverses the orientation of the base and of the fibers (see \cite[Chapter~12]{BZH14}, \cite{Mon73}).
A Montesinos link $L$ with at least three rational tangles is said to be \emph{elliptic} if $\Sigma_2(L)$ is elliptic. 
\item a \emph{$\pi$-hyperbolic link} if $\Sigma_2(L)$ admits a hyperbolic structure.
\end{enumerate}
\end{definition}

\begin{remark}
A Montesinos link $L=L(\frac{\beta_1}{\alpha_1},\dots, \frac{\beta_r}{\alpha_r})$ is elliptic if and only if $r=3$ and the inequality $\sum_{i=1}^{3}\frac{1}{\alpha_i}>1$ holds (see Section~\ref{subsec:Montesinos} for the notation). 
\end{remark}

Here, the \emph{rank} $\rank(G)$ of a finitely generated group $G$ is defined to be the minimal number of elements needed to generate $G$.

\begin{theorem}\label{thm:order}
Let $L$ and $L'$ be two links such that $L \succeq L'$.
Then the following hold.
\begin{enumerate}[label=\textup{(\arabic*)}]
\item If $L$ is the unknot, then $L'$ is the unknot.
\item If $L$ is a $2$-bridge link, then $L'$ is a $2$-bridge link or the unknot.
\item If $L$ is a Montesinos link with $r$ rational tangles \textup{(}$r \geq 3$\textup{)}, then $L'$ is the unknot, or a $2$-bridge link, or a Montesinos link with $r'$ rational tangles \textup{(}$r'\leq r+1$\textup{)}, or a connected sum of $n_1$ $2$-bridge links and $n_2$ elliptic Montesinos links with $n_1 + 2n_2 \leq r-1$.
\item If $L$ is a Seifert link whose determinant is non-zero, then $L'$ is the unknot, or a $2$-bridge link, or an elliptic Montesinos link or a Seifert link, or a connected sum of $n_1$ $2$-bridge links and $n_2$ elliptic Montesinos links with $n_1 + 2n_2 \leq \rank \pi_1(\Sigma_2(L))$. 
\end{enumerate}
\end{theorem}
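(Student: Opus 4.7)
The plan is to transfer the epimorphism $\varphi\colon G^{\orb}(L)\twoheadrightarrow G^{\orb}(L')$ to the level of the $2$-fold branched covers via the short exact sequence
\[
1\to\pi_1(\Sigma_2(L))\to G^{\orb}(L)\to \Z/2\Z\to 1.
\]
A short group-theoretic argument shows that $\varphi$ restricts to an epimorphism $\bar\varphi\colon\pi_1(\Sigma_2(L))\twoheadrightarrow\pi_1(\Sigma_2(L'))$ equivariant with respect to the covering involutions (otherwise $\varphi$ factors through $\Z/2\Z$ and $\pi_1(\Sigma_2(L'))=1$, placing us immediately in the conclusion of~(1)). The task then splits into constraining $\pi_1(\Sigma_2(L'))$ from the structure of $\pi_1(\Sigma_2(L))$, and recovering $L'$ topologically from the involution on $\Sigma_2(L')$.

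Parts (1) and (2) follow quickly. If $L$ is the unknot, then $\pi_1(\Sigma_2(L))=1$, so $\Sigma_2(L')=S^3$ by Perelman's geometrization, and the positive resolution of the Smith conjecture forces $L'$ to be a single unknotted circle. If $L$ is a $2$-bridge link, then $\pi_1(\Sigma_2(L))$ is cyclic, hence so is $\pi_1(\Sigma_2(L'))$; geometrization identifies $\Sigma_2(L')$ with $S^3$ or a lens space, and the classification of strong involutions on these manifolds forces $L'$ to be a $2$-bridge link or the unknot.

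For (3), set $M=\Sigma_2(L)$ with its Seifert fibration over $F=S^2(\alpha_1,\dots,\alpha_r)$, and let $h\in\pi_1(M)$ be a regular fiber, which is central. I would case-split on whether $\bar\varphi(h)=1$. If $\bar\varphi(h)\neq 1$, then $\pi_1(\Sigma_2(L'))$ contains a nontrivial normal cyclic subgroup and, by the Seifert fibration theorem of Casson--Jungreis and Gabai (together with the classification of elliptic manifolds when the image of $h$ has finite order), $\Sigma_2(L')$ is Seifert fibered; the induced map on base $2$-orbifold groups bounds the number of cone points of the target base by $r+1$, and equivariance of $\bar\varphi$ identifies $L'$ as a $2$-bridge link or a Montesinos link with at most $r+1$ rational tangles. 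If $\bar\varphi(h)=1$, then $\pi_1(\Sigma_2(L'))$ is a quotient of the $2$-orbifold group $\pi_1^{\orb}(F)$; since no infinite closed aspherical orientable $3$-manifold group arises as a quotient of a virtually surface group, the Kneser--Milnor decomposition forces each prime summand of $\Sigma_2(L')$ to have finite fundamental group, so to be a lens space or an elliptic Seifert fibered space. Equivariance then yields a connected sum of $2$-bridge and elliptic Montesinos factors, and Grushko's theorem applied to $\pi_1^{\orb}(F)$ (of rank $r-1$) gives $n_1+2n_2\leq r-1$, because a $2$-bridge summand contributes $\geq 1$ and an elliptic Montesinos summand $\geq 2$ to the rank of a free-product decomposition (binary polyhedral groups are not cyclic).

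Part (4) follows the same strategy, with the Seifert fibration of $\Sigma_2(L)$ coming from the Seifert link structure of $L$ itself; the hypothesis $\det(L)\neq 0$ guarantees $H_1(\Sigma_2(L))$ is finite, so the fiber $h$ has infinite order in $\pi_1(\Sigma_2(L))$, and the role of $r-1$ in the Grushko bound is played by $\rank(\pi_1(\Sigma_2(L)))$. The main obstacle throughout will be the equivariant topological recovery: knowing $\Sigma_2(L')$ group-theoretically, one must realise the involution inherited via $\bar\varphi$ as a standard one whose quotient returns a link of the claimed type. This requires the equivariant sphere theorem of Meeks--Simon--Yau to split the involution along the prime decomposition, followed by the Bonahon--Siebenmann classification of involutions on Seifert fibered spaces and lens spaces to identify each factor as a $2$-bridge or (elliptic) Montesinos link, and a careful accounting of the possible extra cone point giving the bound $r+1$ rather than $r$.
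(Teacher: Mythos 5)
Your overall strategy (pass to the index-two subgroups, exploit the central fiber class $h$, invoke Casson--Jungreis/Gabai, and handle the non-prime case by free-product and rank counting) is the same as the paper's, but two steps have genuine problems. First, and most importantly, you never address how to tell whether the quotient link of the Seifert fibered manifold $\Sigma_2(L')$ is a Seifert link or a Montesinos link, i.e.\ whether the covering involution $\tau'$ preserves or reverses the fiber orientation. This is exactly what separates conclusions (3) and (4): in (3) you must \emph{exclude} Seifert links from the list, and in (4) you must exclude non-elliptic Montesinos links and show $L'$ actually is a Seifert link. Appealing to ``equivariance of $\bar\varphi$'' or to the Bonahon--Siebenmann classification of involutions does not do this, because both types of fiber-preserving involutions exist; the needed input is how $\varphi$ transports the behavior of the upstairs involution on the center $Z=\ang{h}$. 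For a Montesinos link one has $\bar{\mu}h\bar{\mu}^{-1}=h^{-1}$ in $G^\orb(L)$, so if $L'$ were a Seifert link then $\varphi(h)$ would be central in $G^\orb(L')$ (BoZi89, Corollary~1.4) and $\varphi(h)=\varphi(h)^{-1}$, impossible for an infinite-order element; conversely, for a Seifert link $L$, $Z$ is central in $G^\orb(L)$, so $\varphi(Z)$ is central in $G^\orb(L')$ and the same corollary forces $L'$ to be a Seifert link. Without this argument your case A in (3) can only conclude ``Seifert fibered double branched cover,'' which is strictly weaker than the theorem.

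Second, in your case $\bar\varphi(h)=1$ the stated justification is false: it is not true that no infinite closed aspherical orientable $3$-manifold group is a quotient of a virtually surface group (a genus-two surface group surjects onto $F_2$, hence onto any $2$-generated group, e.g.\ the fundamental group of the Weeks manifold). What saves the argument is not ``virtually surface'' but the fact that $\pi_1^\orb(S^2(\alpha_1,\dots,\alpha_r))$ is generated by torsion elements, so any torsion-free quotient is trivial; composing with the retractions of the Kneser--Milnor free product onto its factors shows each factor is torsion-generated, rules out a $\Z$ (split) factor, and forces each irreducible factor to be finite by Hempel. The same torsion-generation point is what $\det L\neq 0$ buys you in (4) (base with underlying space $S^2$ or $P^2$), not merely finiteness of $H_1$. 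Finally, your bound on the number of tangles via the induced map on base orbifold groups is not free: the target base group can have rank $r'-2$ rather than $r'-1$ (exactly the family $S^2(2,\dots,2,\text{odd})$ of Remark~\ref{rem:rcomp}), so you need the rank results of Boileau--Zieschang (or the corresponding Fuchsian rank theorem) to get $r'\leq r+1$; as written this step is only asserted.
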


\begin{remark}\label{rem:rcomp}
In the case (3) of Montesinos links, if $L'$ has $r+1$ rational tangles, then $r$ is odd and $L' = L(\frac{\beta}{2\alpha + 1}, \frac{1}{2}, \dots, \frac{1}{2})$ by \cite[Theorem~1.1(i)]{BoZi84}. 
It follows that $L'$ is a link with $r$ components and thus $L$ has $r$ components (see Section~\ref{subsec:Montesinos}).
\end{remark}

\begin{corollary}\label{cor:bridge_number} 
If $K$ is a Montesinos knot and $K \succeq K'$, then $b(K) \geq b(K')$, where $b(K)$ denotes the bridge number of $K$.
\end{corollary}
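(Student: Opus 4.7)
The plan is to reduce the corollary to a direct comparison of bridge numbers by case analysis on $K$, using Theorem~\ref{thm:order} to restrict the possibilities for $K'$. The preliminary observation is that since $K$ is a knot, Proposition~\ref{prop:firstproperties}(1) forces $\#K' \leq \#K = 1$, so $K'$ is also a knot; in particular, in the connected-sum alternative of Theorem~\ref{thm:order}(3), each summand is itself a knot.

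If $K$ has bridge number at most $2$ (the unknot or a $2$-bridge knot), parts (1) and (2) of Theorem~\ref{thm:order}, combined with $K'$ being a knot, immediately give $b(K') \leq b(K)$. Otherwise $K$ is a Montesinos knot with $r \geq 3$ rational tangles, and I would invoke the Boileau--Zieschang bridge-number theorem for Montesinos knots, which gives $b(K) = r$. For the sub-cases of Theorem~\ref{thm:order}(3): if $K'$ is the unknot or a $2$-bridge knot, then $b(K') \leq 2 \leq r$; and if $K'$ is a Montesinos knot with $r'$ rational tangles, then Remark~\ref{rem:rcomp} rules out $r' = r+1$ (the stated form would have $r$ components, contradicting $\#K' = 1$), so $r' \leq r$, and the Boileau--Zieschang formula, supplemented by the trivial bound $b(K') \leq 2$ for $r' \leq 2$, yields $b(K') \leq r$.

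The remaining sub-case is the one I expect to require the most care. Suppose $K' = K_1 \sharp \cdots \sharp K_{n_1 + n_2}$ is a connected sum of $n_1$ $2$-bridge knots and $n_2$ elliptic Montesinos knots with $n_1 + 2n_2 \leq r-1$. Each $2$-bridge summand has bridge number $2$, and each elliptic Montesinos summand has $r = 3$ rational tangles and hence bridge number $3$. Applying Schubert's additivity of bridge number under connected sum, $b(K_1 \sharp K_2) = b(K_1) + b(K_2) - 1$, iteratively gives
\[
b(K') = 2n_1 + 3n_2 - (n_1 + n_2 - 1) = n_1 + 2n_2 + 1 \leq r = b(K),
\]
which completes the argument. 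The main potential obstacle is this connected-sum case, which relies on Schubert's additivity theorem; the rest of the argument is a mechanical check once the Boileau--Zieschang bridge-number formula for Montesinos knots with $r \geq 3$ is in hand.
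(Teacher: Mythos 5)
Your proposal is correct and follows essentially the same route as the paper: the Boileau--Zieschang formula $b(K)=r$ for Montesinos knots with $r\geq 3$ tangles, Theorem~\ref{thm:order}(3) in the prime case, and Schubert's additivity $b(K_1\sharp K_2)-1=(b(K_1)-1)+(b(K_2)-1)$ giving $b(K')=n_1+2n_2+1\leq r$ in the connected-sum case. Your explicit use of Remark~\ref{rem:rcomp} to exclude $r'=r+1$ (which the paper leaves implicit in ``follows readily'') and your separate treatment of $b(K)\leq 2$ are harmless refinements of the same argument.
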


In Section~\ref{sec:bridge}, we discuss the relationship between the bridge number and the preorder $\succeq$.

\subsection{Arborescent links}
$2$-bridge links, Montesinos links, and Seifert links belong to a much wider class of links called arborescent links. 
A link $L$ is an \emph{arborescent link} if $\Sigma_2(L)$ is a graph manifold, that is to say the geometric decomposition of $\Sigma_2(L)$ along essential 2-spheres and tori involves only Seifert fibered pieces.
See \cite{BoSi16}, \cite[Chapter~10.7]{Kaw96}, and \cite{Sak20}. 
For example, the Kinoshita-Terasaka knot is an arborescent knot. 
We also get some constraints on a link $\pi$-dominated by an arborescent link.

\begin{theorem}\label{thm:arb}
Let $L$ be an arborescent link with $\det L \neq 0$.
For a link $L'$ other than the unknot, if $L \succeq L'$, then each prime factor of the connected sum decomposition of the $2$-fold branched cover $\Sigma_2(L')$ has at least one Seifert fibered JSJ piece.
In particular, no factor of a connected sum or a split decomposition of $L'$ can be a $\pi$-hyperbolic link.
\end{theorem}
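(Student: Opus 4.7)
\textit{Proof proposal.} The plan is to pass from $\varphi$ to a surjection of $3$-manifold groups, reduce to a single prime factor of $\Sigma_2(L')$, and use the graph-manifold structure of $\Sigma_2(L)$ together with JSJ rigidity to derive a contradiction.

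Since $L'$ is not the unknot, the parity quotient $G^\orb(L')\twoheadrightarrow\Z/2\Z$ (with kernel $\pi_1(\Sigma_2(L'))$) is surjective; composing with $\varphi$ yields a surjection $G^\orb(L)\twoheadrightarrow\Z/2\Z$, whose kernel $K$ is an index-$2$ subgroup of $G^\orb(L)$ that surjects onto $\pi_1(\Sigma_2(L'))$. Now $K$ is the orbifold fundamental group of a $2$-fold orbifold cover $\widetilde\OO\to\OO(L)$; since finite covers of graph $3$-orbifolds are again graph $3$-orbifolds, $K$ carries a canonical graph-of-groups decomposition with Seifert-fibered orbifold vertex groups (each containing a central $\Z$, the regular fiber class) and virtually $\Z\oplus\Z$ edge groups. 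The hypothesis $\det L\neq 0$ forces $H_1(\Sigma_2(L);\Z)$ to be finite, and hence $H_1(\Sigma_2(L');\Z)$ (as a quotient of a finite-index subgroup of a finite group) is finite as well; in particular $\Sigma_2(L')$ is a rational homology sphere with no $S^1\times S^2$ summand.

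Writing $\Sigma_2(L')=N_1\sharp\cdots\sharp N_k$, the Kneser--Milnor theorem gives $\pi_1(\Sigma_2(L'))=\pi_1(N_1)*\cdots *\pi_1(N_k)$. For any fixed $j$, post-composing with the retraction onto the $j$-th free factor yields a surjection $K\twoheadrightarrow\pi_1(N_j)$.

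Suppose for contradiction that some $N_j$ has no Seifert-fibered JSJ piece; by Perelman's geometrization, every piece of the JSJ decomposition of $N_j$ is then a finite-volume hyperbolic $3$-manifold (Sol manifolds are excluded because they contain an essential $T^2\times I$ Seifert piece). The image in $\pi_1(N_j)$ of any Seifert vertex group of $K$ either contains a non-trivial central element (image of the fiber class) and is therefore virtually cyclic, or virtually $\Z\oplus\Z$ when conjugate into a JSJ torus subgroup, since centralizers of non-trivial elements in $\pi_1(N_j)$ are virtually cyclic inside hyperbolic JSJ pieces and virtually $\Z\oplus\Z$ only along JSJ tori; alternatively, if the fiber class maps trivially, the image factors through the Fuchsian base $2$-orbifold group. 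On the other hand, finite-volume hyperbolic $3$-manifolds with toroidal boundary are atoroidal and anannular, so the fundamental group of each hyperbolic JSJ piece of $N_j$ is freely indecomposable and admits no splitting over a virtually cyclic subgroup. By the JSJ theory of $3$-manifold groups (cf.\ \cite{BoSi16}, and the Rips--Sela / Bowditch / Guirardel--Levitt JSJ for splittings over abelian subgroups), any splitting of $\pi_1(N_j)$ over virtually abelian subgroups must be compatible with the canonical JSJ, so every hyperbolic JSJ vertex group of $N_j$ sits inside a single vertex of the splitting of $\pi_1(N_j)$ inherited from the graph-of-groups structure on $K$. But the vertex groups of this inherited splitting are all virtually abelian or quotients of Fuchsian groups, and no such group can contain a non-elementary hyperbolic $3$-manifold group as a subgroup. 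This contradiction proves the theorem.

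\textit{Main obstacle.} The main technical difficulty is in the last paragraph: extracting an honest graph-of-groups splitting of $\pi_1(N_j)$ from the surjection $K\twoheadrightarrow\pi_1(N_j)$ (rather than merely a generating set with relations) and matching it against the canonical JSJ of $\pi_1(N_j)$. The key input is the rigidity of $3$-manifold groups whose geometric decomposition contains only hyperbolic pieces under abelian splittings, which ensures that any such splitting refines (hence respects) the canonical JSJ; the central fiber class of each Seifert vertex group of $K$ is then the mechanism that forces the final contradiction.
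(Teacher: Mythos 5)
Your reduction to a single prime summand $N_j$ and your identification of the right target property (torsion-free, with small centralizers, when all JSJ pieces of $N_j$ are hyperbolic) match the paper's strategy, but the decisive step in your last paragraph has a genuine gap. From the surjection $K\twoheadrightarrow\pi_1(N_j)$ you try to extract a graph-of-groups splitting of $\pi_1(N_j)$ whose vertex groups are the images of the Seifert vertex groups of $K$, and then to match it against the canonical JSJ of $N_j$. But an epimorphism does not push a graph-of-groups decomposition forward: a quotient of an amalgamated product or graph of groups need not split at all, and JSJ/acylindrical-accessibility theory constrains actual splittings of $\pi_1(N_j)$, not presentations of it as a quotient of a group that splits. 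You flag this yourself as the ``main obstacle,'' and the appeal to rigidity of abelian splittings does not close it, because there is no splitting of $\pi_1(N_j)$ to which that rigidity could apply. Knowing that each vertex group of $K$ has virtually abelian (or Fuchsian-quotient) image is not enough: such subgroups can easily generate a hyperbolic $3$-manifold group. The paper's way around this is to prove something much stronger and genuinely global (Theorem~\ref{thm:graphtohyperbolic}): \emph{every} homomorphism from the fundamental group of a rational homology sphere graph manifold to a torsion-free group with abelian centralizers is trivial. This is done by induction on the number of Seifert pieces of a totally orientable graph structure (Lemma~\ref{lem:graphdecomposition}, Corollary~\ref{cor:planarbase}), with base case Lemma~\ref{lem:trivialseifert}; the induction step peels off a leaf Seifert piece, uses the central fiber to kill a boundary slope, and performs Dehn fillings controlled by Claim~\ref{claim:rationalhomology} and Lemma~\ref{lem:graphfilling}. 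Note that the rational homology sphere hypothesis (i.e.\ $\det L\neq 0$) is used at every stage of this induction, not merely to exclude $S^1\times S^2$ summands as in your write-up. Theorem~\ref{thm:arb} then follows from Corollary~\ref{cor:noepimorphism} combined with the Jaco--Shalen centralizer theorem.

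A secondary issue: your replacement of $\pi_1(\Sigma_2(L))$ by the kernel $K$ of the composite $G^\orb(L)\to\Z/2\Z$ creates problems you do not address. If $K\neq\pi_1(\Sigma_2(L))$, then $K$ is the orbifold group of a $2$-fold cover with nonempty singular locus, and your justification that $H_1$ of the target is finite (``a quotient of a finite-index subgroup of a finite group'') is not correct as written: $H_1(K)$ is not a subgroup or quotient of $H_1(\Sigma_2(L))$, and a double cover of a rational homology sphere can have positive first Betti number, so the conclusion that $\Sigma_2(L')$ is a rational homology sphere would not follow along this route. The paper instead works directly with the induced epimorphism $\tilde\varphi\colon\pi_1(\Sigma_2(L))\twoheadrightarrow\pi_1(\Sigma_2(L'))$, from which finiteness of $H_1(\Sigma_2(L'))$ is immediate; also, the graph-orbifold structure of the cover, which you invoke, is never needed in the paper's argument, since everything is carried out on the manifold $\Sigma_2(L)$ itself.
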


The proof relies crucially on Theorem~\ref{thm:graphtohyperbolic}, which is itself of independent interest.

\begin{question}\label{ques:arborescentdomination} 
Let $L$ be an arborescent link and let $L'$ be a prime link such that $L \succeq L'$ and $\det L \neq 0$.
Does it imply that $L'$ is an arborescent link?
\end{question}

\subsection{Small links and finiteness}
\label{subsec:small_link}

A natural question to ask is whether a given link $L \subset S^3$ $\pi$-dominates only finitely many links in $S^3$. 
In Section~\ref{sec:small}, we show that the answer is no when $\det L = 0$ (see Proposition~\ref{prop:infinite}).

\begin{question}
Does a given link $L \subset S^3$ with non-zero determinant $\pi$-dominate only finitely many links in $S^3$? 
In particular, is it true for any knot $K \subset S^3$?
\end{question}

This question is widely open. 
We give a positive answer when $L$ is a small link, which means that its exterior $E(L)$ does not contain any compact, properly embedded, essential surface whose boundary is empty or a collection of meridian curves. 
In particular, a small link is prime or the unknot (see Section~\ref{sec:small} for more details).

\begin{theorem}
\label{thm:small}
Let $L$ be a small link.
\begin{enumerate}[label=\textup{(\arabic*)}]
\item If $L \succeq L'$, then $L'$ is a small link.
\item $L$ $\pi$-dominates only finitely many links in $S^3$.
\end{enumerate}
\end{theorem}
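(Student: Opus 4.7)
The plan is to reformulate both statements in terms of the $2$-fold branched cover $\Sigma_2(L)$ and then exploit the geometric rigidity that smallness imposes. First I would verify that a link $L\subset S^3$ is small if and only if $L$ is prime (or the unknot) and $\Sigma_2(L)$ is a closed orientable irreducible $3$-manifold with no essential closed surface. This uses classical equivariant surface theory: an essential closed surface in $\Sigma_2(L)$ can be isotoped to be invariant under the covering involution $\tau$ and then projected to an essential surface in $E(L)$ with empty or meridional boundary, and conversely any such surface in $E(L)$ lifts. From the hypothesis $L\succeq L'$ I would then extract a surjection between appropriate $2$-fold orbifold covers, passing in the knot case directly to $\pi_1(\Sigma_2(L))\twoheadrightarrow\pi_1(\Sigma_2(L'))$ via a diagram chase (using that meridians normally generate $G^{\orb}(L)$, their images have order at most $2$ in $G^{\orb}(L')$, and $\pi_1(\Sigma_2(L'))$ is torsion-free in the aspherical case), and in the general link case via a parallel argument involving a $2$-fold cover of the orbifold $\OO(L)$ selected to match $\pi_1(\Sigma_2(L'))$.

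\medskip

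To prove (1), I would argue by contradiction: any essential closed surface in $\Sigma_2(L')$ yields a non-trivial splitting of $\pi_1(\Sigma_2(L'))$ via Bass-Serre theory, equivalently an action on a simplicial tree without global fixed point. Pulling this action back through the epimorphism of the previous step produces such an action of $\pi_1(\Sigma_2(L))$. By geometrization $\Sigma_2(L)$ is either a closed non-Haken hyperbolic $3$-manifold or a small Seifert fibered manifold; in the hyperbolic case such an action would yield an essential surface in $\Sigma_2(L)$ via Culler-Shalen/Stallings-type arguments, contradicting smallness, while in the Seifert case the central $\Z\le\pi_1(\Sigma_2(L))$ forces a global fixed point by Serre's fixed-point theorem and a minimal-subtree argument. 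Thus $\Sigma_2(L')$ is small, and reversing the first step gives smallness of $L'$.

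\medskip

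For (2), geometrization yields three cases for $\Sigma_2(L)$: (a) a spherical space form with finite $\pi_1$; (b) a small Seifert fibered manifold with infinite $\pi_1$, which is a central $\Z$-extension of a Fuchsian triangle group; (c) a closed hyperbolic $3$-manifold. In (a) a finite group has only finitely many quotient groups. In (b) the quotients which can arise as $\pi_1$ of a small $3$-manifold form an explicitly enumerable finite list. In (c) I would invoke a finiteness theorem of Soma / Bridson-Reid-Wang type for $3$-manifold quotients of a closed hyperbolic $3$-manifold group. Thus only finitely many possibilities remain for $G^{\orb}(L')$ and, since a prime link with at least three bridges is determined by its $\pi$-orbifold group and $2$-bridge links are determined by $\Sigma_2$, only finitely many links $L'$ satisfy $L\succeq L'$.

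\medskip

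The main obstacle lies in the hyperbolic case. For (1) one must exclude non-trivial splittings of the non-Haken group $\pi_1(\Sigma_2(L))$ which could in principle arise by pullback even though $\Sigma_2(L)$ admits no embedded essential surface; this needs non-Hakenness combined with rigidity of $3$-manifold group actions on trees (Morgan-Shalen). For (2) the finiteness of $3$-manifold quotients of a closed hyperbolic $3$-manifold group is itself a deep input which I would use essentially as a black box.
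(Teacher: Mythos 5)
Your foundation---the claimed equivalence ``$L$ is small if and only if $L$ is prime (or the unknot) and $\Sigma_2(L)$ contains no essential closed surface''---is false, and the paper is explicit about this: the implication of Gordon--Litherland goes only one way ($\Sigma_2(L)$ non-Haken $\Rightarrow$ $L$ small), and the $2$-fold branched cover of a small link may well be Haken. This is exactly why the paper works with the orbifold $\OO(L)$ rather than with $\Sigma_2(L)$: the correct equivalence (Lemma~\ref{lem:small}) is ``$L$ small $\iff$ $\OO(L)$ small,'' and even that requires a non-trivial argument (the meridional compressing annulus construction of Claim~\ref{claim:annuluscompression}) to pass from an essential meridional surface in $E(L)$ to an incompressible closed $2$-suborbifold of $\OO(L)$; capping off naively in $\Sigma_2(L)$ can produce a compressible surface. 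Your false equivalence breaks both halves of your argument for (1): non-smallness of $L'$ does not hand you an essential closed surface in $\Sigma_2(L')$ (so you get no splitting of $\pi_1(\Sigma_2(L'))$ to pull back), and on the other end an essential surface in $\Sigma_2(L)$ is not a contradiction with smallness of $L$. The paper instead splits $G^\orb(L')$ along $\pi_1^\orb(F)$ for an essential $2$-suborbifold $F\subset\OO(L')$, pulls the Bass--Serre action back to $G^\orb(L)$, and invokes Yokoyama's orbifold version of the tree-action-to-essential-suborbifold theorem to contradict smallness of $\OO(L)$.

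The same problem propagates into (2). Your trichotomy for $\Sigma_2(L)$ (spherical, small Seifert, closed hyperbolic) is fine as a list of geometric types, but in the hyperbolic case $\Sigma_2(L)$ may be Haken, so the Reid--Wang/character-variety finiteness at the manifold level is unavailable, and the ``black box'' you invoke does not exist in the generality you need: Soma-type finiteness concerns non-zero degree maps, not arbitrary epimorphisms, and a finitely generated (even $3$-manifold) group can surject onto infinitely many closed hyperbolic $3$-manifold groups in general. The paper's fix is again orbifold-theoretic: smallness of $\OO(L)$ makes the $\PSL(2,\C)$-character variety $\X(\OO(L))$ finite (orbifold Culler--Shalen theory), and pulling back the holonomy of a hyperbolic $\OO(L')$, or of the hyperbolic base in the Seifert case, through $G^\orb(L)\twoheadrightarrow G^\orb(L')$ gives an injection of the dominated links into this finite set. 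Your Seifert case (b) is also too quick: a central extension of a triangle group has infinitely many small Seifert quotient groups a priori (varying Euler number), and the paper needs the non-vanishing of $\det L$ (Corollary~\ref{cor:nonzerodeterminant}, itself a consequence of the finite orbifold character variety) together with the divisibility $\det L' \mid \det L$ and the Seifert classification (Lemma~\ref{lem:seifertfiniteness}) to cut the list down to finitely many link types. To repair your proposal you would essentially have to reconstruct the paper's orbifold framework; as written, the argument does not go through.
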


For instance, $2$-bridge knots, torus knots, and Montesinos knots with three rational tangles are small, hence they $\pi$-dominate only finitely many knots.

\subsection{Symmetric unions}\label{subsec:symmetricunion}
Finally, we use the $\pi$-orbifold group to study a classical construction in knot theory introduced by Kinoshita and Terasaka in \cite{KiTe57} and generalized by Lamm~\cite{Lam00}, which always produces ribbon knots. 
This construction is a generalization of the connected sum of a knot with its mirror image and called a \emph{symmetric union}. 
It gives a $\pi$-domination between the symmetric union and the partial knot (see Section~\ref{sec:symmetric_uinon}). 
Hence, our results (Theorems~\ref{thm:order}, \ref{thm:arb}, and \ref{thm:small}) provide strong constraints for a knot to be the partial knot of a symmetric union (see Corollaries~\ref{cor:partial_knot}, \ref{cor:arbpartial}, and \ref{cor:finiteness}). 
These tools can be applied to the open problem of whether every ribbon knot admits a symmetric union presentation.

The notion of $\pi$-minimal links developed in Section~\ref{sec:piminimal} leads to the following criterion for a knot not to be a symmetric union.

\begin{theorem}\label{thm:notsymmetricunion}
Let $K$ be a $\pi$-minimal knot. 
If $\rank H_1(\Sigma_2(K); \Z) \geq 3$, then $K$ cannot admit a symmetric union presentation.
\end{theorem}

We show that the ribbon Montesinos knot $11a_{103} = K(\frac{2}{3}, \frac{1}{3}, \frac{2}{7})$ is $\pi$-minimal in Proposition~\ref{prop:11a_{103}}.

\subsection*{Acknowledgments}
This study was supported in part by JSPS KAKENHI Grant Numbers JP19K03505, JP23K12974, and JP24H00686.
The first and second authors were supported by Soka University International Collaborative Research Grant. 
The authors are grateful to the anonymous referee for crucial comments improving the manuscript.

\section{Preliminaries}
\label{sec:preliminaries}

According to \cite[Definition~3.2.2]{Kaw96}, a link $L \subset S^3$ is said to be \emph{locally trivial} if any $2$-sphere $S$ intersecting $L$ in two points bounds a $3$-ball $B$ such that $B \cap L$ is a trivial arc in $B$. 
The unknot $U$ is locally trivial.
For a split link, it is locally trivial if and only if it is a $2$-component trivial link $U\sqcup U$. 
A connected sum of two links distinct from the unknot is not locally trivial.

\begin{definition}[{\cite[Definition~3.2.4]{Kaw96}}]
\label{def:prime}
A link $L \subset S^3$ is \emph{prime} if it is locally trivial and neither the unknot nor $2$-component trivial link.
\end{definition}

The following factorization theorem will be useful (see \cite[Main Theorem]{Has58}, \cite[Theorem~3.2.6]{Kaw96}).

\begin{theorem}[Unique factorization theorem]\label{thm:uniquefactor} 
A non-split link other than the unknot decomposes into a connected sum of finitely many prime links. 
Furthermore, the prime factors are unique, up to a permutation. 
\end{theorem}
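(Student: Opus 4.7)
The plan is to reduce the statement to a classical sphere-system argument in the link exterior, treating connected-sum decompositions of $L$ as systems of \emph{decomposing 2-spheres}, i.e.\ 2-spheres $S\subset S^3$ meeting $L$ transversally in two points such that neither of the two 3-balls bounded by $S$ contains a trivial arc of $L$. Equivalently, by removing open neighborhoods of the two punctures, such a sphere becomes a properly embedded essential annulus in $E(L)$ whose boundary consists of two meridians of $L$. The proof then splits into existence and uniqueness, both carried out on these sphere systems, using that $E(L)$ is irreducible because $L$ is non-split.

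For existence, I would argue by induction on the cardinality of a maximal family of pairwise disjoint and pairwise non-parallel decomposing 2-spheres. By Kneser--Haken finiteness applied to the compact irreducible 3-manifold $E(L)$, the number of disjoint pairwise non-parallel essential annuli with meridional boundary is bounded above by a constant depending only on $E(L)$. Each decomposing sphere splits $(S^3,L)$ into two punctured pairs, which can be capped off by trivial arcs to produce two summands $L_1, L_2$ with $L=L_1\sharp L_2$. Iterating on any summand that is not itself prime or the unknot must terminate because each decomposition strictly increases the size of a disjoint family of decomposing spheres in the original $E(L)$; the final pieces are prime or the unknot, and unknot summands can be discarded.

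For uniqueness, suppose $L = L_1\sharp\cdots\sharp L_m = L_1'\sharp\cdots\sharp L_n'$ are two factorizations into prime links, realized respectively by disjoint systems of decomposing spheres $\mathcal{S}$ and $\mathcal{S}'$. After an ambient isotopy rel.\ $L$ putting $\mathcal{S}\cup\mathcal{S}'$ in general position, the intersection is a disjoint union of simple closed curves in $E(L)$. An innermost-circle argument on the annuli obtained from $\mathcal{S}$ and $\mathcal{S}'$, using the irreducibility of $E(L)$ and the primality of the intermediate summands, lets me remove intersection circles one at a time without altering the associated connected-sum decompositions. Once $\mathcal{S}$ and $\mathcal{S}'$ have been made disjoint, a standard combinatorial matching of the components of $S^3\setminus(\mathcal{S}\cup\mathcal{S}')$ identifies $\{L_1,\dots,L_m\}$ with $\{L_1',\dots,L_n'\}$ up to permutation.

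The main obstacle will be the innermost-circle reduction. One has to rule out the possibility that an intersection curve bounds a compressing or $\partial$-compressing disk for one of the meridional annuli while encircling a puncture on the other sphere; this is exactly where primality of the summands is used, via the fact that a meridional annulus in the exterior of a prime link is either boundary-parallel or yields a genuine connected-sum decomposition. Care is also needed at the first step to ensure that each elimination of an intersection curve preserves the pair $(S^3,L)$ up to isotopy and does not merge two distinct prime factors; this is handled by the standard swallow-follow argument, which requires the non-split hypothesis in order to guarantee that $S^3\setminus L$ contains no essential 2-sphere.
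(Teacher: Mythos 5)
The paper itself offers no proof of this statement: it is quoted as a classical theorem, with references to Hashizume's original article and to Kawauchi's book, so there is no internal argument to compare yours with; what you propose is an outline of the classical sphere-system proof. The existence half of your outline is essentially sound, with one point to make explicit: Kneser--Haken finiteness for surfaces with boundary requires the meridional annuli to be boundary-incompressible as well as incompressible and non-boundary-parallel. This does hold in your setting (an incompressible, boundary-compressible annulus in the irreducible, boundary-irreducible exterior $E(L)$ of a non-split link is boundary-parallel, and boundary-parallel annuli correspond exactly to spheres cutting off trivial arcs, which your decomposing spheres exclude), but it is part of the argument and should be said; note also that incompressibility of these annuli follows from the fact that no $2$-sphere in $S^3$ can meet $L$ transversally in a single point.

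The uniqueness half, by contrast, is asserted rather than proved, and that is where the whole difficulty of the theorem lives. When an innermost circle of $\mathcal{S}\cap\mathcal{S}'$ bounds a disk containing exactly one puncture, the cut-and-paste exchange is not an ambient isotopy: it replaces a sphere of one system by a genuinely new sphere, and one must prove that the modified system still realizes a connected-sum decomposition with the same multiset of prime factors. Your appeal to ``a meridional annulus in the exterior of a prime link is either boundary-parallel or yields a genuine connected-sum decomposition'' does not by itself control this, because the relevant annulus lives in $E(L)$, not in the exterior of a single summand, and primality of the summands must be transported through the splitting; making this precise (typically by an induction on the number of prime factors, using local triviality of the summands) is exactly the content of Hashizume's proof. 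Similarly, the concluding ``standard combinatorial matching'' of the complementary regions of $\mathcal{S}\cup\mathcal{S}'$ is not automatic for links, where distinct decomposing spheres may partition the components of $L$ in different ways and parallel regions between spheres of the two systems must be recognized and collapsed. So the plan follows the right (classical) route, but as written the decisive steps of the uniqueness argument are placeholders rather than proofs.
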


\subsection{Montesinos links}
\label{subsec:Montesinos}
First, we recall fundamental properties of a Montesinos link. 
Let $L = L(\frac{\beta_1}{\alpha_1}, \dots, \frac{\beta_r}{\alpha_r})$ be a Montesinos link with $r \geq 3$ rational tangles of slopes $\beta_i/\alpha_i \in \Q$ with $\alpha_i > 1$ and $\beta_i \neq 0$ coprime to $\alpha_i$. 
It has been shown by Montesinos~\cite{Mon73} that the $2$-fold branched cover $\Sigma_2(L)$ of $L$ is the closed orientable Seifert fibered $3$-manifold 
$V(0; e_0; \frac{\beta_1}{\alpha_1}, \dots, \frac{\beta_r}{\alpha_r})$ with base a $2$-dimensional orientable orbifold with underlying space $S^2$ and $r$ singular points with branching indices $\alpha_i \geq 2$, corresponding to the $r$ exceptional fibers of types $(\alpha_i, \beta_i)$.
See also \cite[Chapter~12.D]{BZH14}. 
Its rational Euler number $e_0 = \sum_{i=1}^{r} \frac{\beta_i}{\alpha_i} \in \Q$ satisfies 
\[
\det L = |H_1(\Sigma_2(L); \Z)| = |e_0| \prod_{i=1}^{r} \alpha_i.
\]
See \cite[Corollary~6.2]{JaNe83} for instance. 
Moreover, $r-2 \leq \rank \pi_1(\Sigma_2(L)) \leq r-1$ by \cite[Theorem~1.1]{BoZi84}. 

The Seifert fibered manifold $\Sigma_2(L) = V(0; e_0; \frac{\beta_1}{\alpha_1}, \dots, \frac{\beta_r}{\alpha_r})$ is determined, up to orientation-preserving homeomorphism, by the rational Euler number $e_0 \in \Q$ and the set of fractions $\{\frac{\beta_1}{\alpha_1}, \dots, \frac{\beta_r}{\alpha_r}\}$ in $\Q/\Z$ up to permutations (see \cite{Orl72}, \cite[Theorem~1.5]{JaNe83}), while the Montesinos link $L(\frac{\beta_1}{\alpha_1}, \dots, \frac{\beta_r}{\alpha_r})$ is determined, up to orientation-preserving homeomorphism of $S^3$, by the rational Euler number $e_0 \in \Q$ and the set of fractions $\{\frac{\beta_1}{\alpha_1}, \dots, \frac{\beta_r}{\alpha_r}\}$ in $\Q/\Z$ up to dihedral permutations (see \cite[Theorem~12.26]{BZH14}).
Note that the sign convention for the rational Euler number $e_0 = \sum_{i} \frac{\beta_i}{\alpha_i}$ is opposite to the choice made in \cite[Section~1]{NeRa78}.

\subsection{Orbifolds}
Here we give some definitions and terminologies on orbifolds. 
An \emph{$n$-orbifold} is a metrizable space locally modeled on $\R^n$ modulo finite group actions. 
See \cite{BMP03} or \cite{CHK00} as a general reference.

\begin{definition}
An $n$-orbifold $\OO$ is said to be \emph{good} if it has some covering orbifold which is a manifold. 
Otherwise, it is \emph{bad}. 
Every orbifold $\OO$ has a universal covering $\pi\colon \widetilde{\OO}\to \OO$ with the property that for any covering $p\colon \OO' \to \OO$ 
there is a covering $\pi'\colon \widetilde{\OO}\to \OO'$ such that $\pi=p\circ \pi'$. 
The \emph{orbifold fundamental group} $\pi_1^{\orb}(\OO)$ of an orbifold $\OO$ is the group of deck transformations of the universal cover $\widetilde{\OO}\to \OO$.
\end{definition}

An orbifold is said to be \emph{compact} if its underlying topological space is compact. 
\begin{definition}
A compact $n$-orbifold $\OO$ is called 
\begin{enumerate}[label=(\arabic*)]
\item \emph{discal} if it is a finite quotient of the $n$-disk $D^n$ by an orthogonal action,
\item \emph{spherical} if it is a finite quotient of $S^n$ by an orthogonal action. 
\end{enumerate}
\end{definition}

A $k$-dimensional suborbifold of a $3$-orbifold is locally modeled on the inclusion $\R^{k}\subset \R^{3}$ modulo a finite group. 
The next definition extends the notion of irreducibility to the setting of $3$-orbifolds.

\begin{definition}
A compact $3$-orbifold $\OO$ is said to be \emph{irreducible} if it does not contain any bad $2$-suborbifold and if every orientable spherical $2$-suborbifold bounds a discal $3$-suborbifold. 
If $\OO$ is not irreducible, then it is \emph{reducible}. 
\end{definition}

There are only four bad compact $2$-orbifolds: first the teardrop $S^2(n)$ which is a $2$-sphere with a cone point of order $n \geq 2$, and its quotient $D^2(n)$ which is a $2$-disk with a corner mirror point of order $2n$; 
second the spindle $S^2(m, n)$ which is a $2$-sphere with two cone points of order $m$ and $n$ ($2\leq m<n$), and its quotient $D^2(m, n)$ which is a $2$-disk with two corner mirror points of order $2m$ and $2n$. 
The orbifold theorem implies that a compact orientable $3$-orbifold $\OO$ admits a finite cover which is a manifold if and only if $\OO$ does not contain a bad $2$-suborbifold, see \cite[Corollary~1.3]{BLP05}.

An orbifold is \emph{closed} if it is compact and has empty orbifold boundary. 
The notion of compressible, incompressible, and essential closed $2$-suborbifold can also be defined in the orbifold context.

\begin{definition}
\label{def:compressible}
Let $F$ be a closed connected $2$-suborbifold in a compact $3$-orbifold $\OO$. 
\begin{enumerate}[label=(\arabic*)]
\item $F$ is \emph{compressible} if either $F$ bounds a discal $3$-suborbifold in $\OO$ or there is a discal $2$-suborbifold $\Delta$ which intersects $F$ transversally in $\partial\Delta=\Delta\cap F$ and such that $\partial \Delta$ does not bound a discal $2$-suborbifold in $F$.
\item $F$ is \emph{incompressible} if no connected component of $F$ is compressible in $\OO$. 
\item $F$ is \emph{essential} if it is incompressible and not boundary parallel. 
\end{enumerate}
\end{definition}

Like for a closed $3$-manifold, we can now define a small closed $3$-orbifold as follows. 

\begin{definition}
A closed $3$-orbifold $\OO$ is said to be \emph{small} if it is irreducible, and it does not contain any essential embedded closed orientable $2$-suborbifold. 
\end{definition}

A Seifert fibered $3$-orbifold is defined as follows.
See \cite{BoSi85}, \cite[Chapter~5]{BMP03}, or \cite{CHK00}.

\begin{definition}
A \emph{Seifert fibration} on a $3$-orbifold $\OO$ is a partition of $\OO$ into closed $1$-suborbifolds called fibers, such that each fiber has a neighborhood modeled on $(S^1\times D^2)/G$, where $G$ is a finite group which acts on each factor. 
By collapsing each fiber to a point, an orbifold bundle projection onto a 2-orbifold can be obtained. 
If $\OO$ admits a Seifert fibration, then it is called \emph{Seifert fibered}.
The fibers are either circles or intervals with mirror endpoints.
\end{definition}

\subsection{$\pi$-domination}
\label{subsec:pi-domination}

We recall the definition of $\pi$-domination between links in $S^3$.
Let $L$ and $L'$ be links in $S^3$, $L$ \emph{$\pi$-dominates} $L'$, written $L \succeq L'$, if there is an epimorphism $\varphi\colon G^\orb(L) \twoheadrightarrow G^\orb(L')$ such that the following diagram is commutative:
\[
\xymatrix{
G^\orb(L) \ar@{->>}[r]^{\nu} \ar@{->>}[d]_-{\varphi} & \Z/2\Z \ar[d]^-{\id} \\
G^\orb(L') \ar@{->>}[r]^{\nu'} & \Z/2\Z.}
\]

Here, we show some basic properties of $\pi$-domination.

\begin{proposition}\label{prop:firstproperties}
Let $L$ be a link in $S^3$.
\begin{enumerate}[label=\textup{(\arabic*)}]
\item $L \succeq L'$ implies that $\# L \geq \# L'$, where $\# L$ denotes the number of components of $L$.
\item Any link $\pi$-dominates the unknot.
\item A connected sum $L_1 \sharp L_2$  $\pi$-dominates each summand $L_1$ and $L_2$. 
\item A split link $L_1 \sqcup L_2$ $\pi$-dominates a connected sum $L_1 \sharp L_2$, and hence $L_1$ and $L_2$.

\end{enumerate}
\end{proposition}

\begin{proof}
For a link $L \subset S^3$, the abelianization $G^\orb(L)^{\ab}$ of the $\pi$-orbifold group is isomorphic to $(\Z/2\Z)^{\# L}$. 
Then, (1) follows from the fact that $\varphi$ induces an epimorphism between the abelianizations of the $\pi$-orbifold groups.
Moreover, (2) is immediate from the epimorphism $G^\orb(L) \xrightarrow{\nu} \Z/2\Z \cong G^\orb(U)$, where $U$ denotes the unknot.

(3) 
For a connected sum, we have $G^\orb(L_1 \sharp L_2) = G^\orb(L_1) \ast_{\Z/2\Z} G^\orb(L_2)$. 
Using the epimorphism $G^\orb(L_2) \xrightarrow{\nu} \Z/2\Z$, one gets an epimorphism $\varphi \colon G^\orb(L_1 \sharp L_2) \to G^\orb(L_1) \ast_{\Z/2\Z} \Z/2\Z = G^\orb(L_1)$, which fits into the diagram of Definition~\ref{def:domination}.

(4) 
There is an epimorphism 
\[
G^\orb(L_1 \sqcup L_2) = G^\orb(L_1) \ast G^\orb(L_2) \to G^\orb(L_1) \ast_{\Z/2\Z} G^\orb(L_2) = G^\orb(L_1 \sharp L_2)
\]
which amalgamates the two cyclic subgroups of order $2$ generated by the images of the meridians involved in the connected sum. 
This epimorphism fits into the diagram of Definition~\ref{def:domination}.
\end{proof}

The next lemma shows that the relation $\succeq$ is strongly related to the domination relation on the set of closed 
orientable $3$-manifolds defined via epimorphisms between their fundamental groups.

\begin{lemma}\label{lem:epicover}
Let $L$ and $L'$ be links in $S^3$.
If $L \succeq L'$ via an epimorphism $\varphi\colon G^\orb(L) \twoheadrightarrow G^\orb(L')$, then 
$\varphi$ induces an epimorphism $\tilde{\varphi}\colon \pi_1(\Sigma_2(L)) \twoheadrightarrow \pi_1(\Sigma_2(L'))$.
\end{lemma}

\begin{proof}
We have the commutative diagram
\[
\xymatrix{
1\ar[r] & \pi_1(\Sigma_2(L)) \ar[r] & G^\orb(L) \ar[r] \ar@{->>}[d]^-{\varphi} & \Z/2\Z \ar[d]^-{\id} \ar[r] & 1 \\
1\ar[r] & \pi_1(\Sigma_2(L')) \ar[r] & G^\orb(L') \ar[r] & \Z/2\Z \ar[r] & 1,}
\]
where the two rows are exact.
It follows from a diagram chasing that $\varphi$ induces a homomorphism $\tilde{\varphi}\colon \pi_1(\Sigma_2(L))\to \pi_1(\Sigma_2(L'))$ fitting into the above diagram.
Furthermore, $\tilde{\varphi}$ is surjective by a diagram chasing.
\end{proof}

The next lemma states that, for a given epimorphism between $\pi$-orbifold groups, if the domain arises from a knot, then the commutativity of the diagram in Definition~\ref{def:domination} always holds.
This remains true for links $L$ and $L'$ with the same number of components when $\Sigma_2(L')$ is aspherical.

\begin{lemma}
\label{lem:diagram}
\begin{enumerate}[label=\textup{(\arabic*)}]
\item If $K$ is a knot and there is an epimorphism $\varphi \colon G^\orb(K)\twoheadrightarrow G^\orb(L')$, then $L'$ is a knot and $K \succeq L'$. 
\item Suppose that $\# L = \# L'$ and that $\Sigma_2(L')$ is aspherical. If there is an epimorphism $\varphi \colon G^\orb(L)\twoheadrightarrow G^\orb(L')$, then $L \succeq L'$. 
\end{enumerate}
\end{lemma}

\begin{proof}
(1) 
Since $G^\orb(K)^{\ab} \cong \Z/2\Z$, $L'$ is a knot by the proof of Proposition~\ref{prop:firstproperties}(1). 
Let $\bar{\mu} \in G^\orb(K)$ be the image of any meridian $\mu \in G(K)$.
Since $\varphi(\bar{\mu})$ normally generates $G^\orb(L')$, $\varphi(\bar{\mu})$ does not belong to the normal subgroup $\pi_1(\Sigma_2(L'))$ of index $2$ in $G^\orb(L')$, and thus $\nu' \circ \varphi(\bar{\mu}) \neq 0$.

(2) 
The induced epimorphism $\bar{\varphi} \colon G^\orb(L)^{\ab} \to G^\orb(L')^{\ab}$ is an isomorphism since the abelian group $(\Z/2\Z)^{\# L}$ is hopfian. 
Then, for $\bar{\mu} \in G^\orb(L)$, $\varphi(\bar{\mu})$ is of order $2$. 
Hence, it does not belong to $\pi_1(\Sigma_2(L'))$ which is torsion-free by the hypothesis.
Therefore, $\varphi(\bar{\mu})$ maps to the generator of the quotient $G^\orb(L')/\pi_1(\Sigma_2(L')) \cong \Z/2\Z$.
\end{proof}

\section{Proof of Theorem~\ref{thm:order} concerning the preorder $\succeq$}
In this section, we give the proofs of Theorem~\ref{thm:order} and Corollary~\ref{cor:bridge_number}. 
Before the proofs, we recall the following facts about $2$-fold branched covers.

\begin{lemma}
\label{lem:folklore}
Let $L$ be a link in $S^3$.
\begin{enumerate}[label=\textup{(\arabic*)}]
\item $L$ is the unknot if and only if $\Sigma_2(L)\cong S^3$. 
\item If $L=L_1\sharp L_2$, then $\Sigma_2(L)\cong \Sigma_2(L_1)\sharp \Sigma_2(L_2)$.
\item $L$ is either a prime link or the unknot if and only if $\Sigma_2(L)$ is irreducible.
\end{enumerate}
\end{lemma}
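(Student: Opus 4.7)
For (1), the forward direction is standard: viewing $S^3$ as the join $S^1 \ast S^1$, a rotation of one factor realizes the 2-fold cover branched over the other (an unknot). For the reverse direction, if $\Sigma_2(L) \cong S^3$, then the covering involution $\tau$ is an orientation-preserving involution of $S^3$ whose fixed-point set maps homeomorphically onto $L$; by the positive solution of the Smith conjecture for involutions (Waldhausen), such an involution has fixed set a single unknotted circle, so $L$ is a one-component unknot.

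For (2), I would take the decomposing sphere $S$ for the connected sum $L = L_1 \sharp L_2$, meeting $L$ transversally in two points. Since the 2-fold cover of $S^2$ branched over two points is $S^2$, the preimage $\widetilde{S}$ of $S$ in $\Sigma_2(L)$ is a single 2-sphere. It separates $\Sigma_2(L)$ into two pieces, each the 2-fold branched cover of a ball containing the tangle obtained from $L_i$ by removing a trivial arc; capping with the complementary ball (which holds the trivial arc, whose 2-fold branched cover is a ball) reconstructs $\Sigma_2(L_i)$. Hence each side of $\widetilde{S}$ is $\Sigma_2(L_i)$ with an open ball removed, and the gluing yields $\Sigma_2(L_1) \sharp \Sigma_2(L_2)$.

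For the forward direction of (3), I would apply the equivariant sphere theorem of Meeks--Simon--Yau. Assuming $L$ is prime (the unknot case is trivial) and $\Sigma_2(L)$ is reducible, there is an essential 2-sphere $\Sigma$ that is either $\tau$-invariant or disjoint from $\tau(\Sigma)$. If $\tau$-invariant, the restriction $\tau|_\Sigma$ has either zero or two fixed points after an equivariant transverse perturbation; the free case would give an embedded $\R P^2 \subset S^3$, which is impossible, so there are two fixed points and $\Sigma$ projects to a 2-sphere in $S^3$ meeting $L$ transversally in two points. By local triviality of $L$, this sphere bounds a ball with a trivial arc, whose 2-fold branched cover is a ball bounded by $\Sigma$, contradicting essentiality. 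If instead $\Sigma \cap \tau(\Sigma) = \emptyset$, then $\Sigma \sqcup \tau(\Sigma)$ projects to a sphere in $S^3$ disjoint from $L$; if either side of this sphere is disjoint from $L$, then $\Sigma$ bounds a ball in $\Sigma_2(L)$ (contradiction), and if both sides meet $L$, then $L$ is split, contradicting primality.

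For the reverse direction of (3), I would argue the contrapositive: if $L$ is neither prime nor the unknot, then $L$ is either split or a non-trivial connected sum $L_1 \sharp L_2$ with both $L_i \neq U$. In the split case $L = L_1 \sqcup L_2$, lifting the splitting sphere (disjoint from $L$) gives two disjoint spheres in $\Sigma_2(L)$, and an analogous cutting/gluing computation shows $\Sigma_2(L) \cong \Sigma_2(L_1) \sharp \Sigma_2(L_2) \sharp (S^1 \times S^2)$, which is reducible thanks to the $S^1 \times S^2$ summand. In the non-trivial connected sum case, part (2) gives $\Sigma_2(L) \cong \Sigma_2(L_1) \sharp \Sigma_2(L_2)$, and part (1) ensures that neither summand is $S^3$, so the sum is non-trivial and hence reducible. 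The main technical obstacle I expect is the careful equivariant analysis in the forward direction of (3), which rests on the nontrivial equivariant sphere theorem and on ruling out the $\R P^2$ quotient.
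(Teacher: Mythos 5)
Your overall route is the same one the paper has in mind (it only cites references): Waldhausen/Smith for (1), lifting the summing sphere for (2), and the Meeks--Simon--Yau/Dunwoody equivariant sphere theorem for (3). Parts (1), (2), and the reverse direction of (3) are fine as written.

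There is, however, one genuine gap in the forward direction of (3), in the case where the essential sphere $\Sigma$ is $\tau$-invariant. You assert that ``after an equivariant transverse perturbation'' the involution $\tau|_\Sigma$ has either zero or two fixed points. This is not justified, and in general no such equivariant perturbation exists: an invariant involution of $S^2$ can also be of reflection type, with fixed set a circle, which happens exactly when $\Sigma$ contains an entire component $\widetilde{K}$ of $\mathrm{Fix}(\tau)$ (the branch locus). In that situation the image of $\Sigma$ in $S^3$ is a disk $D$ with $\partial D$ a component $K$ of $L$, and any equivariant isotopy of $\Sigma$ descends to an isotopy of this disk, so its preimage can never become transverse to the branch locus; the case cannot be perturbed away and must be treated. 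It can be handled easily, in either of two ways. (i) Since $\tau|_\Sigma$ is orientation-reversing while $\tau$ preserves the orientation of $\Sigma_2(L)$, $\tau$ exchanges the two sides of $\Sigma$; hence the two boundary spheres $\Sigma_\pm$ of an invariant product neighborhood of $\Sigma$ are swapped by $\tau$, are disjoint from $\mathrm{Fix}(\tau)$, and are parallel to $\Sigma$, so essential. Replacing $\Sigma$ by $\Sigma_+$ puts you in your already-handled case $\Sigma\cap\tau(\Sigma)=\emptyset$. (ii) Alternatively, note that $\mathrm{int}(D)\cap L=\emptyset$ (points of $\Sigma\setminus\widetilde{K}$ are not fixed by $\tau$), so $K$ bounds a disk whose interior misses $L$; taking a small regular neighborhood of $D$ shows that either $L=K$ is the unknot or $L$ is split, contradicting primality. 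With this case added (and noting that at an isolated fixed point of $\tau|_\Sigma$ the sphere is automatically transverse to the branch locus, so no perturbation is needed in the two-point case either), your argument is complete and agrees with the intended proof.
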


(1) follows from the proof of the Smith conjecture (see \cite{MoBa84}).
(2) holds by construction.
(3) is proved by the equivariant sphere theorem (see \cite[Section~7]{MSY82} or \cite{Dun85}). 

\begin{proof}[Proof of Theorem~\ref{thm:order}]
Assume $L \succeq L'$ via an epimorphism $\varphi\colon G^\orb(L)  \twoheadrightarrow G^\orb(L')$. 
Then, by Lemma~\ref{lem:epicover}, $\varphi$ induces an epimorphism $\tilde{\varphi}\colon \pi_1(\Sigma_2(L)) \twoheadrightarrow \pi_1(\Sigma_2(L'))$. 
Now, the proof of Theorem~\ref{thm:order} uses the orbifold theorem, see \cite[Theorem~1]{BoPo01}.

\medskip
\noindent
(1)
If $L$ is the unknot, $G^\orb(L) \cong \Z/2\Z$. 
Therefore, $L'$ is a knot by Proposition~\ref{prop:firstproperties}(1) and $G^\orb(L') \cong \Z/2\Z$, since the $\pi$-orbifold group is never trivial. 
Then it is a consequence of the proof of the Smith conjecture that $L'$ is the unknot if and only if $G^\orb(L') \cong \Z/2\Z$, see \cite{MoBa84} and \cite[Proof of Proposition~3.2]{BoZi89}.

\medskip
\noindent
(2)
It follows from the orbifold theorem that a link $L$ is a $2$-bridge link if and only if $G^\orb(L) $ is a dihedral group, see \cite[Proposition~3.2]{BoZi89}.
Therefore, $G^\orb(L) $ is a dihedral group.
Since the image $\tilde{\varphi}(\pi_1(\Sigma_2(L))) = \pi_1(\Sigma_2(L'))$ is cyclic or trivial, $G^\orb(L')$ is either a dihedral group or $\Z/2\Z$.
It follows that $L'$ is either a $2$-bridge link or the unknot.

\medskip
\noindent
(3)
We distinguish two cases according to whether the $\pi$-orbifold group $G^\orb(L')$ is finite or not.
If $G^\orb(L')$ is \emph{finite}, then $\pi_1(\Sigma_2(L'))$ is finite.
By the orbifold theorem, $\Sigma_2(L')$ is a Seifert $3$-manifold with finite fundamental group and $L'$ is either the unknot, a $2$-bridge link, or an elliptic Montesinos link with three rational tangles. 

If $G^\orb(L')$ is \emph{infinite}, then $G^\orb(L)$ is infinite and $\Sigma_2(L)$ is a Seifert fibered $3$-manifold with an infinite fundamental group.
Now, $\pi_1(\Sigma_2(L))$ contains an infinite cyclic center $Z$ and the quotient $\Gamma = \pi_1(\Sigma_2(L))/Z$ is the orbifold fundamental group of the base $\mathcal{B}$, which is generated by torsion elements since the underlying space is $S^2$ by the definition of the Montesinos link $L$.
We distinguish two cases (I) and (II) according to whether $L'$ is a prime link or not.

(I) 
If $L'$ is prime, then $\Sigma_2(L')$ is irreducible by Lemma~\ref{lem:folklore}(3).
Since $\pi_1(\Sigma_2(L'))$ is infinite, $\Sigma_2(L')$ is an aspherical $3$-manifold (see \cite[Proposition~1.1(a)]{BoZi89}).
The epimorphism $\tilde{\varphi}\colon \pi_1(\Sigma_2(L)) \twoheadrightarrow \pi_1(\Sigma_2(L'))$ maps the infinite cyclic center $Z$ of $\pi_1(\Sigma_2(L))$ into the center of $\pi_1(\Sigma_2(L'))$.

Suppose, to the contrary, that $\pi_1(\Sigma_2(L'))$ is centerless. 
Then $\tilde{\varphi}(Z) =\{1\}$ and $\tilde{\varphi}$ induces an epimorphism from the orbifold group $\Gamma = \pi_1(\Sigma_2(L))/Z$ onto $\pi_1(\Sigma_2(L'))$.
Since $\Sigma_2(L')$ is aspherical, $\pi_1(\Sigma_2(L'))$ is torsion-free.
On the other hand, $\Gamma$ is generated by torsion elements.
Thus, the image $\tilde{\varphi}(\Gamma)$ must be trivial, and this is impossible.
Therefore, $\pi_1(\Sigma_2(L'))$ has a non-trivial center.
In particular, it contains an infinite cyclic normal subgroup and it follows from \cite[Theorem~1.1]{CaJu94} or \cite[Corollary~2]{Gab92} that 
$\Sigma_2(L')$ admits a Seifert fibration.
By the orbifold theorem (see \cite{MeSc86}), one can assume that the covering involution $\tau'$ on $\Sigma_2(L')$ is fiber-preserving. 
If $\tau'$ preserves the orientation of the fibers, $L'$ is a Seifert link. 
Otherwise, the underlying space of the base of the Seifert fibration is $S^2$ by \cite[Section~5.11]{Mon73} and $L'$ is a Montesinos link with $r'$ rational tangles by \cite[Section~2]{Mon73}, where
\[
r' \leq \rank \pi_1(\Sigma_2(L')) +2 \leq \rank \pi_1(\Sigma_2(L)) +2 \leq r + 1.
\]

The center $Z = \ang{h}$ of $\pi_1(\Sigma_2(L))$ is an infinite cyclic normal subgroup in $G^\orb(L)$ generated by an element $h$ such that $\bar{\mu} h \bar{\mu}^{-1} = h^{-1}$, where $\bar{\mu}$ is the image of a meridian $\mu$ of $G(L)$ in the $\pi$-orbifold group $G^\orb(L)$.
This follows from the fact that, for a Montesinos link, the covering involution $\tau$ on $\Sigma_2(L)$ is fiber-preserving and reverses the orientation of the fibers (see \cite[Section~2]{Mon73} and \cite[Theorem~1.3]{BoZi89}).
The image $\varphi (Z)$ of the center $Z$ of $\pi_1(\Sigma_2(L))$ is in the center of $\pi_1(\Sigma_2(L'))$.

Suppose, to the contrary, that the link $L'$ is a Seifert link.
Then the covering involution $\tau'$ preserves the orientation of the fibers of $\Sigma_2(L')$, which means that $\varphi(Z)$ belongs to the center of the $\pi$-orbifold group 
$G^\orb(L')$ (see \cite[Corollary~1.4]{BoZi89}).
Therefore,
\[
 \varphi(h) = \varphi(\bar{\mu}) \varphi(h) \varphi(\bar{\mu})^{-1} = \varphi(\bar{\mu} h \bar{\mu}^{-1}) = \varphi(h)^{-1}.
\]
This implies that $\varphi(h) = \varphi(h)^{-1}$, which is impossible because $\varphi(h)$ is a non-trivial element of infinite order in $\pi_1(\Sigma_2(L')) \subset G^\orb(L')$ since $\Sigma_2(L')$ is aspherical.

(II) 
Assume that $L'$ is not prime and has at most three bridges.
First, we consider the case where $L'$ is split: $L' = L'_1 \sqcup L'_2$. 
Then $L'$ cannot be the $2$-component trivial link $U\sqcup U$ which is a $2$-bridge link. 
It can always be expressed as a non-trivial connected sum $L'_1 \sharp (U \sqcup U) \sharp L'_2$. 
It follows from Lemma~\ref{lem:folklore}(1) and (2) that $\Sigma_2(L') = \Sigma_2(L'_1) \sharp (S^1 \times S^2) \sharp \Sigma_2(L'_2)$, with $\Sigma_2(L'_1)$ or $\Sigma_2(L'_2)$ not being $S^3$. 
Therefore, $\pi_1(\Sigma_2(L')) = \pi_1(\Sigma_2(L'_1)) \ast \Z \ast \pi_1(\Sigma_2(L'_2))$ is a non-trivial free product, and hence centerless.
The epimorphism $\tilde{\varphi}\colon \pi_1(\Sigma_2(L)) \twoheadrightarrow \pi_1(\Sigma_2(L'))$ must kill the infinite cyclic center $Z$ of $\pi_1(\Sigma_2(L))$.
Thus $\tilde{\varphi}$ factors through $\Gamma = \pi_1(\Sigma_2(L))/Z$ and induces an epimorphism onto each factor of $\pi_1(\Sigma_2(L'_1)) \ast \Z \ast \pi_1(\Sigma_2(L'_2))$.
As mentioned at the beginning of (3), $\Gamma$ is generated by torsion elements, and thus each factor must contain some torsion element. 
Therefore, $L$ cannot be split. 

We now assume that $L'$ is not split. 
Since $L'$ is not prime by Theorem~\ref{thm:uniquefactor}, $L'$ decomposes into a connected sum $L'=L'_1\sharp\cdots\sharp L'_n$ of finitely many prime links which are unique up to permutations ($n \geq 2$).
Then, by Lemma~\ref{lem:folklore}(1)--(3), $\Sigma_2(L') \cong \Sigma_2(L'_1) \sharp\cdots\sharp \Sigma_2(L'_n)$ is a non-trivial connected sum of closed, orientable, irreducible $3$-manifolds. 
In particular, $\pi_1(\Sigma_2(L')) = \pi_1(\Sigma_2(L'_1))  \ast \cdots \ast \pi_1(\Sigma_2(L'_n))$ is a non-trivial free product and hence centerless. 
The argument given above for the split case shows that each factor $\pi_1(\Sigma_2(L'_i))$ must contain some torsion element ($1 \leq i \leq n$). 
Since $\pi_1(\Sigma_2(L'_i))$ is the fundamental group of a closed irreducible orientable $3$-manifold for $1 \leq i \leq n$, it must be of finite order by \cite[Corollary~9.9]{Hem04}. 
By the orbifold theorem, each $\Sigma_2(L'_i)$ is a Seifert $3$-manifold with finite fundamental group, and each link $L'_i$ is either a $2$-bridge link or an elliptic Montesinos link with three rational tangles.
Let $n_1$ be the number of 2-bridge factors of $L'$ and $n_2$ the number of elliptic Montesinos factors of $L'$.
Then, the number $n = n_1 + n_2$ of prime factors of $L'$ satisfies $n_1 + 2n_2 \leq r-1$ since the fundamental group of the $2$-fold branched cover of a $2$-bridge link has rank $1$, that of an elliptic Montesinos link has rank $2$, and that of a Montesinos link with $r\geq 3$ rational tangles has rank at most $r-1$.

\medskip
\noindent
(4)
Let $L$ be a Seifert link with $\det L \neq 0$. 
Then $\Sigma_2(L)$ is a rational homology $3$-sphere (i.e., closed $3$-manifold with $H_\ast(\Sigma_2(L);\Q)\cong H_\ast(S^3;\Q)$) which is Seifert fibered. 
Like in the case of Montesinos links, we distinguish two cases according to whether the $\pi$-orbifold group $G^\orb(L')$ is finite or not.

If $G^\orb(L')$ is finite, then $\pi_1(\Sigma_2(L'))$ is finite and, by the orbifold theorem, $\Sigma_2(L')$ is a Seifert $3$-manifold with finite fundamental group and 
$L'$ is either the unknot, a $2$-bridge knot, or an elliptic Montesinos knot with three rational tangles.

If $G^\orb(L')$ is infinite, then $G^\orb(L)$ is infinite and $\Sigma_2(L)$ is a Seifert fibered $3$-manifold with infinite fundamental group. 
The fundamental group $\pi_1(\Sigma_2(L))$ contains an infinite cyclic center $Z$, and since $\Sigma_2(L)$ is a rational homology sphere, the underlying space of its base is $S^2$ or the projective plane $P^2$. 
It follows that the quotient $\Gamma = \pi_1(\Sigma_2(L))/Z$ is generated by torsion elements.
Then, like in the proof of the previous case (3) for Montesinos links, we distinguish two cases according to whether the link $L'$ is prime or not.

If the link $L'$ is prime, then the proof given in the previous case (3) shows that $\Sigma_2(L')$ is a Seifert manifold and that the epimorphism $\tilde{\varphi}\colon \pi_1(\Sigma_2(L)) \twoheadrightarrow \pi_1(\Sigma_2(L'))$ maps the infinite cyclic center $Z$ of $\pi_1(\Sigma_2(L))$ into a non-trivial subgroup of the center of $\pi_1(\Sigma_2(L'))$. 
Moreover, $Z$ belongs to the center of the $\pi$-orbifold group $G^\orb(L)$, see \cite[Corollary~1.4]{BoZi89}. 
Then the non-trivial subgroup $\varphi(Z)$ belongs to the center of the $\pi$-orbifold group $G^\orb(L')$. 
Therefore, $L'$ is a Seifert link by \cite[Corollary~1.4]{BoZi89}.

If $L'$ is not prime, then the proof given in the previous case (3) shows that $L'$ must be a connected sum of $n_1$ $2$-bridge knots and $n_2$ elliptic Montesinos knots with $n_1 + 2n_2 \leq \rank \pi_1(\Sigma_2(L))$. 
\end{proof}

\begin{proof}[Proof of Corollary~\ref{cor:bridge_number}] 
By \cite{BoZi85}, the bridge number $b(K)$ of a Montesinos knot with $r \geq 3$ rational tangles is $r$. 
Then, the proof follows readily from Theorem~\ref{thm:order}(3) when $K'$ is prime. 
When $K'$ is not prime, it is the connected sum of $n_1$ $2$-bridge knots and $n_2$ elliptic Montesinos knots with $n_1 + 2n_2 \leq r-1$. 
Schubert's bridge formula \cite{Sch54} shows that the bridge number minus one is additive by connected sum: 
\[
b(K_1 \sharp K_2) - 1 = (b(K_1)-1) + (b(K_2)-1). 
\]
It follows that the bridge number of the connected sum of $n_1$ $2$-bridge knots and $n_2$ elliptic Montesinos knots is $n_1 + 2n_2 + 1 \leq r = b(K)$.
\end{proof}

\section{Proof of Theorem~\ref{thm:arb} concerning arborescent links}

Let $N$ be a closed orientable irreducible $3$-manifold whose JSJ pieces are all hyperbolic. 
Then $N$ is aspherical and thus $\pi_1(N)$ is torsion-free. 
Moreover, by \cite[Theorem~VI.1.6(i)]{JaSh79}, the centralizer in $\pi_1(N)$ of any non-trivial element is isomorphic to $\Z$ or $\Z \oplus \Z$ (see also \cite{Fri11}), hence abelian. 
Then, Theorem~\ref{thm:arb} follows from  Theorem~\ref{thm:graphtohyperbolic} and Corollary~\ref{cor:noepimorphism}, together with Lemma~\ref{lem:epicover}. 
The proof of Theorem~\ref{thm:graphtohyperbolic} is based on Lemmas~\ref{lem:trivialseifert} and \ref{lem:graphdecomposition} below.

\begin{lemma}\label{lem:trivialseifert}
Let $M$ be a rational homology $3$-sphere which is Seifert fibered with an orientable base. 
Let $G$ be a torsion-free group such that the centralizer in $G$ of any non-trivial element is abelian.
Then, any homomorphism $\phi \colon \pi_1(M) \to G$ is trivial.
\end{lemma}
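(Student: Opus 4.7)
The plan is to exploit the central extension structure of $\pi_1(M)$ coming from the Seifert fibration. Since $M$ is Seifert fibered with orientable base orbifold $B$, the regular fiber class $h$ generates an infinite (or finite) cyclic subgroup $Z = \ang{h}$ that is central in $\pi_1(M)$, and there is a short exact sequence
\[
1 \to Z \to \pi_1(M) \to \pi_1^{\orb}(B) \to 1.
\]
Moreover, because $M$ is a rational homology $3$-sphere with orientable base, the underlying surface of $B$ must be $S^2$: otherwise $\pi_1(M)$ would surject onto $H_1(\Sigma_g;\Z) \cong \Z^{2g}$ with $g \geq 1$, contradicting the finiteness of $H_1(M;\Z)$. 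Hence
\[
\pi_1^{\orb}(B) = \ang{q_1, \dots, q_r \mid q_i^{\alpha_i} = q_1 \cdots q_r = 1}
\]
is generated by the torsion elements $q_i$ of order $\alpha_i$, and $\pi_1(M)$ is generated by $h$ together with lifts of $q_1, \dots, q_r$.

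Given $\phi\colon \pi_1(M) \to G$, I would split into two cases according to whether $\phi(h)$ is trivial. If $\phi(h) = 1$, then $\phi$ descends to a homomorphism $\pi_1^{\orb}(B) \to G$. For each generator $q_i$ we have $\phi(q_i)^{\alpha_i} = 1$, and since $G$ is torsion-free this forces $\phi(q_i) = 1$. As the $q_i$ and $h$ generate $\pi_1(M)$, the homomorphism $\phi$ is trivial.

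If instead $\phi(h) \neq 1$, then I would use centrality of $h$: for every $g \in \pi_1(M)$, $\phi(g)$ commutes with $\phi(h)$, so
\[
\phi(\pi_1(M)) \subseteq C_G(\phi(h)).
\]
By hypothesis $C_G(\phi(h))$ is abelian, so $\phi$ factors through the abelianization $H_1(M;\Z)$. Because $M$ is a rational homology sphere, $H_1(M;\Z)$ is finite, and hence $\phi(\pi_1(M))$ is a finite abelian subgroup of the torsion-free group $G$; therefore $\phi$ is trivial.

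In both cases $\phi$ is trivial, finishing the proof. The only point requiring any real care is the dichotomy on $\phi(h)$: centrality of $h$ turns the condition ``centralizers of non-trivial elements are abelian'' into ``the image is abelian'' in the nontrivial case, which is exactly what is needed to combine with the rational homology sphere hypothesis. The rest is formal once one identifies the base as $S^2$ with cone points.
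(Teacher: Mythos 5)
Your proof is correct and follows essentially the same route as the paper's: centrality of the fiber class $h$, the dichotomy on $\phi(h)$, torsion-freeness of $G$ killing the torsion-generated base orbifold group, and the rational homology sphere condition excluding an infinite abelian image inside $C_G(\phi(h))$. The only difference is organizational (the paper treats finite $\pi_1(M)$ separately and runs the $\phi(h)\neq 1$ case as a contradiction), which does not change the substance.
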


\begin{proof}
By the assumption, the base of the Seifert manifold $M$ is a $2$-sphere with finitely many singular points. 
If $\pi_1(M)$ is finite, then the lemma follows from the fact that the group $G$ is torsion-free. 
So we assume that $\pi_1(M)$ is infinite. 
Since the base is orientable, the regular fiber $h \in \pi_1(M)$ generates a central infinite cyclic group of $\pi_1(M)$ and the quotient $\Gamma = \pi_1(M)/\ang{h}$ is generated by torsion elements.

Let $\phi \colon \pi_1(M) \to G$ be a homomorphism. 
Suppose, to the contrary, that $\phi(h) \neq 1$.
Then $\phi(h)$ generates a central infinite cyclic subgroup of the image $\phi(\pi_1(M))$ since $G$ is torsion-free. 
It follows that $\phi(\pi_1(M))$ is a subgroup of the centralizer $C_{G}(\phi(h))$ of $\phi(h)$ in $G$.
By the assumption, $C_{G}(\phi(h))$ is torsion-free and abelian, and so is $\phi(\pi_1(M))$.
This contradicts the fact that the abelianization of $\phi(\pi_1(M))$ is finite since $M$ is a rational homology sphere. 
Therefore, $\phi(h) = 1$ and $\phi$ induces a homomorphism $\Gamma \to G$.
Since $G$ is torsion-free and $\Gamma$ is generated by torsion elements, the induced homomorphism is trivial.
\end{proof}

\begin{definition}
A compact Seifert fibered $3$-manifold is said to be \emph{totally orientable} if it is orientable with an orientable base orbifold. 
A \emph{totally orientable graph structure} on a compact orientable prime $3$-manifold $M$ is a decomposition of $M$ along essential, non-parallel tori into totally orientable Seifert fibered submanifolds such that the adjacent Seifert fibrations do not match on the splitting tori.
\end{definition}

\begin{lemma}\label{lem:graphdecomposition} 
Let $M$ be a compact orientable prime graph manifold. 
Then $M$ admits a totally orientable graph structure.
\end{lemma}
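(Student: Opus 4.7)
The plan is to start from any Seifert-piece decomposition of $M$---for instance the JSJ decomposition, which exists because $M$ is a prime graph manifold---and then refine it until every piece becomes totally orientable. Write $M=M_1\cup\cdots\cup M_k$ with each $M_i$ Seifert fibered over a base orbifold $B_i$ and with orientable total space. Define the complexity $\sum_i \kappa(|B_i|)$, where $\kappa$ counts the crosscaps of the underlying surface of $B_i$, and induct on this quantity, leaving alone any $M_i$ that is already totally orientable.

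The first move peels cone points off a non-orientable base. In a piece $M_i$ with non-orientable base, encircle each cone point of $B_i$ by a small two-sided simple closed curve bounding a sub-disk containing only that cone point. The preimages in $M_i$ are disjoint essential vertical tori; cutting along them splits off totally orientable solid-torus pieces fibered over a disk with a single cone point, and leaves a central piece fibered over a (still non-orientable) surface with boundary and no cone points.

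The second move removes crosscaps one at a time. Provided the central piece has at least one crosscap, choose a two-sided essential simple closed curve $c$ in its base bounding a Möbius band $\mu$ containing exactly one crosscap and no cone points. The preimage of $\mu$ is a twisted $I$-bundle over the Klein bottle, homeomorphic to $K\tilde\times I$, and the preimage of $c$ is a vertical essential torus $T$. The key input is that $K\tilde\times I$ admits a second Seifert fibration, this time with orientable base $D^2(2,2)$, whose regular fiber has a slope on $\partial(K\tilde\times I)$ different from that of the original fibration. Re-fibering this piece via the alternative fibration turns it into a totally orientable Seifert submanifold whose fibration does not match that of the rest of $M_i$ across $T$, while the base of the remainder of $M_i$ has one fewer crosscap.

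Iterating these two moves finitely often yields a decomposition of $M$ into totally orientable Seifert pieces along essential tori. Three clean-up points should be checked: parallel tori can be discarded; any torus across which the adjacent fibrations accidentally match is removed and the two adjacent pieces amalgamated, with the merged base remaining orientable because gluing orientable surfaces along a boundary circle preserves orientability; and each new torus is essential in $M$ since vertical essential tori in a JSJ Seifert piece remain essential in the ambient manifold. The main obstacle is justifying the second move: it rests on the classical but non-trivial fact that $K\tilde\times I$ carries two Seifert fibrations whose regular fibers have distinct slopes on the common boundary torus, which is exactly what forces the non-matching condition in the definition of a totally orientable graph structure.
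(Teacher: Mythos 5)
Your second move is exactly the paper's mechanism: the preimage of a M\"obius band containing no cone points is the orientable twisted $S^1$-bundle over the M\"obius band (equivalently the orientable twisted $I$-bundle over the Klein bottle), and re-fibering it over the disk $D^2(2,2)$ with two cone points of order $2$ simultaneously makes the base orientable and forces the non-matching condition, because the two fibrations have distinct fiber slopes on the boundary torus. The genuine problem is your first move. A vertical torus lying over a curve that encircles a single cone point bounds a fibered solid torus, so it is compressible in the piece and in $M$; such tori are not essential, and a decomposition that uses them---with solid-torus pieces whose boundary tori are compressible---is not a graph structure in the sense of the definition, which requires the splitting tori to be essential and non-parallel. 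Your clean-up claim that ``vertical essential tori in a JSJ Seifert piece remain essential in the ambient manifold'' does not rescue this, because these particular tori are not essential even in the piece; and amalgamating them away just reinstates the cone points you tried to remove. Fortunately the move is unnecessary: choose the M\"obius bands disjoint from the cone points, leaving the cone points (and the boundary circles of the base) in the complementary planar part. This is precisely what the paper does, writing the punctured non-orientable base as a planar surface, carrying all cone points and boundary, with $k$ M\"obius bands glued along some of its boundary circles.

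With the first move deleted, two further checks are needed, both of which the paper makes explicitly and your clean-up only gestures at. First, if the base of a piece is a M\"obius band with no cone points, your curve $c$ is parallel to the boundary of the base, so the torus over $c$ is boundary-parallel and must not be introduced; instead one re-fibers the whole piece over $D^2(2,2)$, and the new fibration may then match the adjacent piece across the old splitting torus, in which case the two pieces are amalgamated into one. Second, for the genuinely new tori, incompressibility in the piece requires that the complement of $\mu$ in the base is not a disk with at most one cone point; this holds because that complement contains the cone points and at least one boundary circle of the base (the paper also notes that the complementary Seifert piece is not $T^2\times[0,1]$, since its base has at least three boundary components). With these repairs your induction on the number of crosscaps goes through and is essentially the paper's argument.
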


\begin{proof}
Let $M$ be a prime graph manifold. 
By definition, $M$ admits a decomposition along essential, non-parallel tori into Seifert fibered submanifolds such that the adjacent Seifert fibrations do not match on the splitting tori.
If the base of a Seifert piece $S$ is non-orientable, then the underlying space is a punctured non-orientable surface.
A closed non-orientable surface is a connected sum of $k$ projective planes for some $k \geq 1$. 
Then, a non-orientable surface with $q\geq 1$ punctures is either a M\"{o}bius band if $k = q =1$ or it is obtained by gluing $k$ M\"{o}bius bands along some boundary components of a planar surface with $k + q \geq 3$ boundary components.
The Seifert fibration over each M\"{o}bius band gives a Seifert fibered submanifold $Q_i$ ($i=1, \dots, k$) of $S$ which is a twisted $S^1$-bundle over a M\"{o}bius band having an incompressible torus boundary. 
Then one can consider, on each $Q_i$, the other Seifert fibration whose base is a disk with two singular points of order $2$. 

If the base of $S$ is a M\"{o}bius band, then this operation replaces the Seifert piece $S$ with a non-orientable base by a Seifert piece with an orientable base. 
Otherwise, it replaces $S$ by the union of $k+1$ Seifert pieces, which are $S \setminus \bigsqcup_{i} \Int(Q_i)$ and the $k$ Seifert manifolds $Q_i$. 
Here, $S \setminus \bigsqcup_{i} \Int(Q_i)$ has a planar base and each $Q_i$ has an orientable base and an incompressible boundary.
Moreover, the Seifert piece $S \setminus \bigsqcup_{i} \Int(Q_i)$ is not homeomorphic to $T^2 \times [0, 1]$ because the underlying surface of its base has at least three boundary components.
In the first case, the new Seifert fibration on $S$ may match the Seifert fibration of the adjacent Seifert piece, which gives a single Seifert piece. 
In the second case, the Seifert fibrations do not match any more on the gluing tori $\partial Q_i$, and therefore one gets $k+1$ distinct Seifert pieces. 
\end{proof}

The graph dual to this totally orientable graph structure is obtained from the graph dual to the JSJ decomposition in the following way: if the underlying space of the base of $S$ is a M\"{o}bius band, then either the vertex associated to $S$ remains unchanged or it is crashed with its edge into the adjacent vertex. 
Otherwise, $k \geq 1$ extra leaf vertices associated to $Q_i$'s are connected by $k$ edges to the vertex associated to $S \setminus \bigsqcup_{i} \Int(Q_i)$ (previously associated to $S$).

\begin{remark}\label{rem:planarbase} 
For a rational homology $3$-sphere $M$ which is a graph manifold, the bases of the Seifert fibered pieces have planar underlying spaces if the bases are orientable. 
Moreover, the graph dual to the JSJ decomposition, which associates a vertex to each Seifert piece and an edge to each torus, is a tree.
\end{remark}

The following is a straightforward corollary of Lemma~\ref{lem:graphdecomposition} and Remark~\ref{rem:planarbase}.

\begin{corollary}\label{cor:planarbase} 
Let $M$ be a rational homology $3$-sphere which is a prime graph manifold. 
Then, $M$ admits a totally orientable graph structure such that the bases of the Seifert pieces have planar underlying spaces and the graph dual to the decomposition is a tree.
\end{corollary}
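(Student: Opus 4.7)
The plan is to construct the desired decomposition by taking the totally orientable graph structure furnished by Lemma~\ref{lem:graphdecomposition} and then verifying, using the rational homology sphere hypothesis, that the resulting Seifert pieces have planar base underlying surfaces and that the dual graph has no cycles.

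First, I would apply Lemma~\ref{lem:graphdecomposition} to the prime graph manifold $M$, producing a totally orientable graph structure whose pieces are the JSJ pieces of $M$ with orientable base together with the refined pieces introduced in the proof of that lemma. Recall that each Seifert piece $S$ of the JSJ decomposition with non-orientable base is either refibered (when the base is a M\"obius band, giving a new base which is a disk with two cone points of order $2$) or split along tori $\partial Q_i$ into the outside piece $S \setminus \bigsqcup_i \Int(Q_i)$ together with the twisted $S^1$-bundles $Q_i$ over the M\"obius band parts of the base of $S$, each of which is then refibered with base a disk with two cone points of order $2$.

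Next, for the planarity statement, I would argue piece by piece. The Seifert pieces of the JSJ decomposition whose base is originally orientable have planar underlying base by Remark~\ref{rem:planarbase}, which is where the $\Q$-homology sphere hypothesis enters. The refined pieces listed above each have planar base by inspection of the construction: disks with two cone points are evidently planar, and the outside piece $S \setminus \bigsqcup_i \Int(Q_i)$ has base exactly the planar surface obtained by removing the $k$ M\"obius band parts from the non-orientable base of $S$, which was built by attaching $k$ M\"obius bands to a planar surface with $k+q$ boundary components.

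Finally, for the tree property, I would argue that every splitting torus in the refined decomposition is separating in $M$. Since $M$ is a $\Q$-homology sphere, $H_2(M; \Q) = 0$, so every embedded orientable closed surface is rationally null-homologous. A non-separating torus would admit a dual loop meeting it transversally in a single point and would therefore represent a non-zero class in $H_2(M; \Z)$, hence in $H_2(M; \Q)$, contradicting $H_2(M;\Q)=0$. Thus every splitting torus is separating and, since $M$ is connected, the dual graph of the refined decomposition is a tree. I do not foresee any substantive obstacle: the corollary is a direct combination of Lemma~\ref{lem:graphdecomposition}, Remark~\ref{rem:planarbase}, and the elementary homological observation just made.
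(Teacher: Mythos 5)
Your proposal is correct and follows essentially the same route as the paper, which derives the corollary directly from Lemma~\ref{lem:graphdecomposition} and Remark~\ref{rem:planarbase}; you simply fill in the routine details (planarity of the refined pieces, and the standard fact that in a $\Q$-homology sphere every embedded torus is separating, so the dual graph has no cycles). Your separating-torus argument applied to all splitting tori of the refined decomposition is a slightly more direct way to get the tree property than the paper's implicit path (JSJ dual graph is a tree, and the refinement only crushes edges or adds leaf vertices), but it is the same circle of ideas.
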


\begin{remark}\label{rem:mod2homology} 
When $M$ is the $2$-fold branched cover $\Sigma_2(K)$ of a knot $K \subset S^3$, the bases of the Seifert fibered JSJ pieces of $M$ have planar underlying spaces since $M$ is a $\Z/2\Z$-homology sphere.
If the base of a Seifert fibered JSJ piece is non-orientable, then the orientation covering of the base induces a connected $2$-fold cover of $M$, which is not possible.
\end{remark}

\begin{theorem}\label{thm:graphtohyperbolic}
Let $M$ be a rational homology $3$-sphere which is a graph manifold, and let $G$ be a torsion-free group where the centralizer in $G$ of any element is abelian.
Then any homomorphism $\phi \colon \pi_1(M) \to G$ is trivial.
\end{theorem}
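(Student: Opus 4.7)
The plan is first to reduce to the case where $M$ is prime, since if $M=M_1\sharp\cdots\sharp M_k$ then $\pi_1(M)$ is the free product of the $\pi_1(M_i)$, each $M_i$ is itself a prime graph manifold and a rational homology $3$-sphere, and it suffices to prove the theorem on each free factor. Corollary~\ref{cor:planarbase} then endows $M$ with a totally orientable graph structure whose Seifert bases have planar underlying space and whose dual graph is a tree $\mathcal{T}$. This presents $\pi_1(M)$ as the fundamental group of a graph of groups over $\mathcal{T}$, with vertex groups $\pi_1(S_v)$ (each with central regular fiber $h_v$) and edge groups $\pi_1(T_e)\cong\Z^2$. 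I shall repeatedly use the following consequence of the hypothesis on $G$: if $x,y\in G$ are both non-trivial and commute, then $C_G(x)=C_G(y)$, because each centralizer is abelian and contains the other element.

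The first step is to show that if $\phi(h_v)=1$ for every vertex $v$, then $\phi$ is trivial. In each Seifert presentation of $\pi_1(S_v)$, the cone-point generators satisfy $x_i^{\alpha_i}=h_v^{-\beta_i}$, so $\phi(x_i)^{\alpha_i}=1$ and hence $\phi(x_i)=1$ by torsion-freeness of $G$. On each JSJ torus $T_e$ with endpoints $v,w$, writing $h_w=p\,h_v+q\,c_v^{(e)}$ in the basis $\{h_v,c_v^{(e)}\}$ of $\pi_1(T_e)=\Z^2$---with $q\neq 0$ because the two Seifert fibrations do not match on $T_e$---the equation $1=\phi(h_w)=\phi(c_v^{(e)})^q$ forces $\phi(c_v^{(e)})=1$. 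Since each $\pi_1(S_v)$ is generated by $h_v$, the $x_i$ and the boundary section curves $c_v^{(e)}$, the map $\phi$ vanishes on every vertex group and hence on all of $\pi_1(M)$.

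Assume therefore that $\phi(h_{v_0})\neq 1$ for some vertex $v_0$, and set $A:=C_G(\phi(h_{v_0}))$, which is abelian by hypothesis. The heart of the argument is to establish that $\phi(\pi_1(S_u))\subseteq A$ for every vertex $u$, which then gives $\phi(\pi_1(M))\subseteq A$; I would do this by induction on the distance from $v_0$ in $\mathcal{T}$. When $\phi(h_u)\neq 1$ and $u$ is adjacent to a vertex whose vertex-group image already lies in $A$, both $\phi(h_u)$ and $\phi(h_{v_0})$ lie in the abelian image of a common edge group, so the centralizer-matching property from the first paragraph yields $C_G(\phi(h_u))=A$, and then $\phi(\pi_1(S_u))\subseteq C_G(\phi(h_u))=A$. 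When $\phi(h_u)=1$, the restriction of $\phi$ to $\pi_1(S_u)$ factors through the planar orbifold group $\pi_1^{\orb}(B_u)$ with all cone generators killed, so its image is generated by the boundary loops $\phi(c_u^{(e)})$ subject to the product relation $\prod_e \phi(c_u^{(e)})=1$; on each boundary torus $T_e$ at $u$ going to a neighbor $w$, the identity $\phi(h_w)=\phi(c_u^{(e)})^{q}$ (with $q\neq 0$) shows that $\phi(c_u^{(e)})$ is either trivial (when $\phi(h_w)=1$, by torsion-freeness) or a root of $\phi(h_w)$ lying in the abelian group $\phi(\pi_1(T_e))$, and combining these local constraints with the product relation and the centralizer-matching confines every $\phi(c_u^{(e)})$ inside $A$.

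Once $\phi(\pi_1(M))\subseteq A$ is established, the image is abelian and hence a quotient of $H_1(M;\Z)$, which is finite because $M$ is a rational homology $3$-sphere; torsion-freeness of $G$ then forces $\phi(\pi_1(M))=1$, contradicting $\phi(h_{v_0})\neq 1$. I expect the main obstacle to lie in the induction step at a vertex $u$ with $\phi(h_u)=1$ whose several neighbors all have non-trivial fiber image: there one must combine the single relation $\prod_e \phi(c_u^{(e)})=1$ in $\pi_1^{\orb}(B_u)$ with the commutative-transitive structure of $G$ to guarantee that the several roots of adjacent fibers all lie in a single maximal abelian subgroup of $G$, and making this propagation rigorous throughout the tree $\mathcal{T}$ is the technically delicate part of the proof.
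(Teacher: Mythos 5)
Your reduction to the prime case, the use of Corollary~\ref{cor:planarbase}, and the two easy halves of your induction (all fiber images trivial; a vertex with non-trivial fiber image adjacent to a vertex already absorbed into $A$) are fine --- in the latter case your phrase ``both $\phi(h_u)$ and $\phi(h_{v_0})$ lie in the abelian image of a common edge group'' is not literally correct when the neighbour $w$ is not $v_0$, but the intended argument ($\phi(h_u)\in\phi(\pi_1(T_e))\subseteq\phi(\pi_1(S_w))\subseteq A$, so $\phi(h_u)$ commutes with $\phi(h_{v_0})$ and commutative transitivity gives $C_G(\phi(h_u))=A$) is immediate. The genuine gap is exactly the one you flag at the end, and it is not a technicality that careful bookkeeping will remove: at a vertex $u$ with $\phi(h_u)=1$ and valence at least $3$ in the tree, the quotient of $\pi_1(S_u)$ by $h_u$ and the cone generators is a free group on the boundary classes $c_u^{(e)}$ subject only to $\prod_e c_u^{(e)}=1$, so within $S_u$ there is no commutation at all between the images $\phi(c_u^{(e)})$ on different branches. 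The only further local constraints are the root relations $\phi(h_{w'})=\phi(c_u^{(e')})^{q}$ on the adjacent tori, and these are perfectly consistent with a non-abelian configuration: with $G$ a free group (which is torsion-free and commutative transitive) one can have $c_0\in A$, $c_0=(c_1c_2)^{-1}$, and $c_1,c_2$ non-commuting, each branch image lying in distinct maximal abelian subgroups $C_G(c_1)\neq C_G(c_2)$. So ``all branches fall into the single subgroup $A$'' cannot be extracted from the vertex and edge relations alone; it requires global input, and your scheme invokes the rational homology sphere hypothesis only at the very end (finiteness of $H_1(M;\Z)$).

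This is precisely where the paper's proof takes a different route and why it is organized as an induction on the number of Seifert pieces rather than a propagation along the tree. For a leaf piece $S$ with $\phi(h)\neq 1$, the rational homology hypothesis forces $\phi(\pi_1(S))\cong\Z$ (the image is abelian and torsion-free, and $H_1(S;\Q)\cong\Q$), hence some slope $\gamma\subset\partial S$ is killed; $\phi$ then factors through the Dehn fillings $M_0(\gamma)$ and $S(\gamma)$, Claim~\ref{claim:rationalhomology} shows $M_0(\gamma)$ is again a rational homology sphere, and Lemma~\ref{lem:graphfilling} shows it splits into smaller graph-manifold rational homology spheres to which the inductive hypothesis applies; the resulting contradiction with $M$ being a rational homology sphere kills $\phi(h)$, and a second filling $M_0(\alpha)$ disposes of the case $\phi(h)=1$. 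In other words, the rational homology condition is used at every stage to manufacture killed slopes and to keep the filled pieces inside the inductive class; that is the mechanism which replaces, and as far as I can see must replace, your hoped-for local confinement of all the roots of adjacent fibers into one maximal abelian subgroup. To salvage your approach you would need a substitute for this global step --- for instance an induction over subtrees exploiting that each subtree complement is a rational homology solid torus --- and at that point you would essentially be reproving Lemma~\ref{lem:graphfilling} in group-theoretic language.
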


\begin{proof}
Let $M$ be a rational homology $3$-sphere which is a graph manifold.
If $M$ is not prime, then $M = M_1 \sharp \cdots \sharp M_n$ is a connected sum of prime graph manifolds which are rational homology spheres, and hence irreducible. 
Since $\pi_1(M)$ is isomorphic to the free product $\pi_1(M_1) \ast\cdots \ast \pi_1(M_n)$, one needs only to prove Theorem~\ref{thm:graphtohyperbolic} for a prime graph manifold. 
So, assume that $M$ is prime.
By Corollary~\ref{cor:planarbase}, we can always assume that the rational homology sphere $M$ admits a totally orientable graph structure and carry out an induction argument on the number of Seifert pieces for such a decomposition. 
The case of a single Seifert piece follows from Lemma~\ref{lem:trivialseifert}.

Let us assume that the statement is true when the manifold $M$ has a totally orientable graph structure with at most $n$ Seifert pieces and prove it when $M$ has such a decomposition with $n+1$ Seifert pieces.
Let $S \subset M$ be a Seifert piece of the decomposition corresponding to a leaf in the tree dual to the decomposition. 
Then, $S$ is a Seifert $3$-manifold with boundary a torus and base a disk with finitely many singular points. 
The regular fiber $h \in \pi_1(S)$ generates the center of $\pi_1(S)$ and the quotient $\Gamma_S = \pi_1(S)/\ang{h}$ is generated by torsion elements. 

Let $\phi \colon \pi_1(M) \to G$ be a homomorphism. 
Suppose, to the contrary, that $\phi(h) \neq 1$.
Now, $\phi(\pi_1(S))$ is torsion-free and abelian because it is a subgroup of the centralizer $C_G(\phi(h))$. 
Since $S$ is a rational homology solid torus, $H_1(S; \Q) \cong \Q$ and so $\phi(\pi_1(S)) \cong \Z$.
In particular, $\phi(\pi_1(\partial S)) \cong \Z$ since $h \in \pi_1(\partial S)$. 
Therefore, there is a simple closed curve $\gamma \subset \partial S$ such that $\phi(\gamma)$ is trivial.

Let $M_0 = \overline{M \setminus S}$ be the closure of the complement of $S$ in $M$. 
It is a graph manifold whose boundary $\partial S$ is an incompressible torus, and it admits a totally orientable graph structure with $n$ Seifert pieces. 
Let $M_0(\gamma) = M_0 \cup V$ be the Dehn filling obtained by gluing a solid torus $V$ to $\partial M_0$ in such a way that the boundary of the meridian disk of $V$ is identified with the curve $\gamma$. 
In the same way, we define the Dehn filling $S(\gamma) = S\cup V$. 
Then the homomorphism $\phi \colon \pi_1(M) \to G$ factors through a ``squeeze'' along the incompressible torus $\partial M_0 = \partial S$ which kills the simple closed curve $\gamma \subset \partial S$ and induces a homomorphism 
\[
\phi' \colon \pi_1(M_0(\gamma)) \ast_{\Z} \pi_1(S(\gamma)) \to G, 
\] 
where the amalgamating group $\Z$ is generated by a simple closed curve $\delta \subset \partial S$ dual to $\gamma$.

\begin{claim}\label{claim:rationalhomology}
The closed manifold $M_0(\gamma)$ is a rational homology sphere.
\end{claim}

\begin{proof}
The Mayer-Vietoris exact sequence for the rational homology sphere $M = M_0 \cup_{T^2} S$ implies that the homomorphism
\[
i_{\ast} \oplus j_{\ast}\colon H_1(T^2; \Q) \to H_1(M_0; \Q) \oplus H_1(S; \Q)
\]
induced by the inclusions $i\colon T^2 \to M_0$ and $j\colon T^2 \to S$ is an isomorphism. 
Since the isomorphism $\phi_{\ast}\colon H_1(S; \Q) \to \Q$ kills the element $j_{\ast}(\gamma) \in H_1(S; \Q)$, one has $j_{\ast}(\gamma)= 0$, and hence $i_{\ast}(\gamma)\neq 0 \in H_1(M_0; \Q)$. 
Then the homomorphism 
\[
\psi\colon H_1(T^2; \Q) \to H_1(M_0; \Q) \oplus H_1(V; \Q)
\]
in the Mayer-Vietoris exact sequence for the manifold $M_0(\gamma) = M_0 \cup_{T^2} V$ is an isomorphism. 
Therefore, $H_1(M_0(\gamma); \Q)= 0$ and $M_0(\gamma)$ is a rational homology sphere. 
\end{proof}

Since $M_0(\gamma)$ is a rational homology sphere, Lemma~\ref{lem:graphfilling} below implies that $\pi_1(M_0(\gamma))$ is a free product of the fundamental groups of rational homology spheres which are graph manifolds having totally orientable graph structures with at most $n$ Seifert pieces.
The induction hypothesis implies that the homomorphism $\phi'$ is trivial on each factor of this free product and hence $\phi'(\pi_1(M_0(\gamma))) =\{1\}$.
The assumption $\phi(h) \neq 1$ implies that $\phi(\pi_1(S)) = \phi'(\pi_1(S(\gamma))) \cong \Z$ and that
\[
\phi(\pi_1(M)) = \phi' \bigl( \pi_1(M_0(\gamma)) \ast_{\Z} \pi_1(S(\gamma)) \bigr) = \phi'(\pi_1(S(\gamma))).
\]
This contradicts the fact that $M$ is a rational homology sphere, and therefore $\phi(h) = 1$.

Now, $\phi$ factors through $\Gamma_S$ which is generated by torsion elements.
Since $G$ is torsion-free, $\phi(\pi_1(S)) = \{1\}$.
It follows that $\phi$ factors through a homomorphism 
\[
\phi_0 \colon \pi_1(M_0)/\aang{\pi_1(\partial M_0)} \to G.
\] 
Since $M_0$ is a rational homology solid torus, one can choose a simple closed curve $\alpha \subset \partial M_0$ such that the Dehn filling $M_0(\alpha)$ is a rational homology sphere (see \cite[Lemma~3.2]{Wat12} for example). 
The natural epimorphism
\[
\pi\colon \pi_1(M_0(\alpha)) \twoheadrightarrow \pi_1(M_0)/\aang{\pi_1(\partial M_0)}
\] 
induces a homomorphism $\phi_0 \circ \pi \colon \pi_1(M_0(\alpha)) \to G$ and we have
\[
\phi_0 \circ \pi (\pi_1(M_0(\alpha))) = \phi_0(\pi_1(M_0)/\aang{\pi_1(\partial M_0)}) = \phi(\pi_1(M)).
\]
By the choice of $\alpha$, the manifold $M_0(\alpha)$ is a rational homology sphere and it follows from Lemma~\ref{lem:graphfilling} below that $M_0(\alpha)$ is a connected sum of closed, irreducible, rational homology spheres which are graph manifolds having totally orientable graph structures with at most $n$ Seifert pieces. 
The induction hypothesis, as above, implies $\phi_0 \circ \pi (\pi_1(M_0(\alpha))) = \{1\}$, and hence $\phi$ is trivial.
\end{proof}

\begin{lemma}\label{lem:graphfilling} 
Let $M_0$ be an orientable, irreducible graph manifold whose boundary is an incompressible torus and which admits a totally orientable graph structure with at most $n$ Seifert pieces.
If $M_0(\gamma)$ is a rational homology sphere, then $M_0(\gamma)$ is a connected sum of closed, irreducible, rational homology spheres which are graph manifolds having totally orientable graph structure with at most $n$ Seifert pieces.
\end{lemma}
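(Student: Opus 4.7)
The plan is to analyse the Dehn filling directly on the Seifert piece $S_0$ of the totally orientable graph structure on $M_0$ containing the torus $T_1 = \partial M_0$, and then propagate the conclusion through the rest of the decomposition by induction on $n$. By Corollary~\ref{cor:planarbase}, the orientable base orbifold $B_0$ of $S_0$ has planar underlying surface, say with $m_0 \geq 1$ boundary circles (one corresponding to $T_1$) and cone points of orders $\alpha_1, \ldots, \alpha_r$ giving the Seifert invariants $(\alpha_i, \beta_i)$. Let $h$ denote the regular fiber slope of $S_0$ on $T_1$, and split the argument according to whether $\gamma$ coincides with $h$ or not.

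In the generic case $\gamma \neq h$, the Seifert fibration of $S_0$ extends uniquely to the filling solid torus $V$, with the core of $V$ becoming a fiber of multiplicity equal to the geometric intersection number $\Delta(\gamma, h)$. Hence $S_0(\gamma) = S_0 \cup V$ is Seifert fibered over the orientable planar orbifold obtained by capping off the base circle corresponding to $T_1$, possibly with one extra exceptional fiber. Provided $S_0(\gamma)$ is not a solid torus, replacing $S_0$ by $S_0(\gamma)$ yields a totally orientable graph structure on $M_0(\gamma)$ with at most $n$ Seifert pieces; if the fibrations of adjacent pieces happen to match across a former JSJ torus, we merge them, which only decreases the count. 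If $S_0(\gamma)$ is a solid torus (which forces $m_0 = 2$), we absorb it across its remaining JSJ torus, thereby realising the surgery as a Dehn filling of the adjacent Seifert piece within a graph submanifold of strictly fewer Seifert pieces, and the induction hypothesis applies.

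The main obstacle is the fiber-slope case $\gamma = h$, where the fibration does not extend. Here I would establish the prime decomposition
\[
S_0(\gamma) \;\cong\; L(\alpha_1, \beta_1) \,\sharp\, \cdots \,\sharp\, L(\alpha_r, \beta_r) \,\sharp\, \underbrace{(D^2 \times S^1) \,\sharp\, \cdots \,\sharp\, (D^2 \times S^1)}_{m_0 - 1},
\]
where each of the $m_0 - 1$ solid torus summands contains exactly one JSJ boundary torus of $S_0$ and has its meridian identified with the restriction of the fiber slope $h$ to that torus. The verification exploits the fact that the central fiber $h$ becomes null-homotopic in $S_0(\gamma)$: this yields compressing disks for each remaining JSJ boundary torus and for each exceptional fiber's tubular boundary, and a standard cut-and-paste argument with the resulting essential spheres splits off the summands. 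As a consistency check, one recovers the fundamental group $\pi_1(S_0(\gamma)) \cong F_{m_0 - 1} \ast (\Z/\alpha_1\Z) \ast \cdots \ast (\Z/\alpha_r\Z)$, the orbifold fundamental group of $B_0$.

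Gluing each of the solid torus summands of $S_0(\gamma)$ back to the Seifert piece on the other side of its JSJ torus is equivalent to Dehn filling that piece along the fiber slope of $S_0$ on that torus. By the non-matching condition built into any totally orientable graph structure, this slope differs from the adjacent piece's own fiber slope, so that filling falls into the preceding case on a graph submanifold with at most $n - 1$ Seifert pieces. Altogether, $M_0(\gamma)$ is a connected sum of lens spaces together with Dehn fillings of the connected components of $\overline{M_0 \setminus S_0}$, each admitting a totally orientable graph structure with strictly fewer Seifert pieces, and the induction on $n$ closes. Finally, since any prime summand of a rational homology sphere is itself a rational homology sphere, the assumption on $M_0(\gamma)$ ensures that every summand obtained in this decomposition is a rational homology sphere, as required.
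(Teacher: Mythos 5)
Your proposal is correct and follows essentially the same route as the paper: induction on the number of Seifert pieces of a totally orientable graph structure, a case split on the piece meeting $\partial M_0$ according to whether $\gamma$ is the fiber slope, extension of the Seifert fibration in the generic case (with the solid-torus degeneration absorbed into the rest and handled by the induction hypothesis), and in the fiber-slope case a splitting into lens spaces and summands containing the adjacent tori, to whose Dehn-filled neighboring components the induction hypothesis is applied. Your explicit connected-sum formula for $S_0(\gamma)$, with solid-torus summands whose meridians are the fiber slope, is just a sharper form of the paper's argument via a maximal collection of essential vertical annuli, so the two proofs coincide in substance.
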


\begin{proof}
Since $M_0(\gamma)$ is a rational homology sphere, $M_0$ is a rational homology solid torus. 
Therefore, the Seifert fibered pieces in a totally orientable graph structure on $M_0$ have planar underlying spaces. 
Then the proof is by induction on the number $n$ of Seifert pieces of a totally orientable graph structure on $M$.
If such a decomposition exists with a single Seifert piece, then the base of $M_0$ is a disk with $k$ cone points because $M_0$ has incompressible boundary, where $k\geq 2$. 
There are two cases according to whether the curve $\gamma$ on $\partial M_0$ is homotopic to the fiber of the Seifert fibration of $M_0$ or not.

If $\gamma$ is parallel to the regular fiber of $M_0$, then an essential properly embedded vertical annulus in $M_0$ will give an essential embedded $2$-sphere in $M_0(\gamma)$. 
Therefore, in this case, $M_0(\gamma)$ is a connected sum of $k$ lens spaces corresponding to the essential annuli surrounding the singular fibers. 
Hence $M_0(\gamma)$ is a connected sum of irreducible, rational homology spheres which are graph manifolds, and each one has a single Seifert piece with orientable base.

Consider the other case where the intersection number of $\gamma$ with the regular fiber of $M_0$ is $\delta \geq 1$.
Then, the Seifert fibration on $M_0$ extends to a Seifert fibration on $M_0(\gamma)$ over $S^2$ with $k$ or $k+1$ singular fibers depending on whether $\delta = 1$ or $\delta \geq 2$. 
Hence, $M_0(\gamma)$ is a graph manifold with a single Seifert piece whose base is orientable.

By induction, we assume that Lemma~\ref{lem:graphfilling} is true when a graph manifold has a totally orientable graph structure with at most $n$ Seifert pieces.
Let us prove it for $M_0$ having such a decomposition with $n+1$ Seifert pieces.
Consider the Seifert piece $Y$ of $M_0$ such that $\partial Y \supset \partial M_0$.

We first assume that $Y$ is a cable space (i.e., Seifert fibered over an annulus with a single cone point) and the intersection number $\delta$ of $\gamma$ with a regular fiber of $Y$ on $\partial M_0$ is $1$.
Then, $Y(\gamma)$ is a solid torus. 
Therefore, $M_0(\gamma) = \overline{M_0\setminus Y} \cup Y(\gamma)$ is a Dehn filling of the irreducible graph manifold $\overline{M_0\setminus Y}$ which admits a totally orientable graph structure with $n$ Seifert pieces. 
Then, the induction hypothesis implies that $M_0(\gamma)$ is a connected sum of closed, irreducible, rational homology spheres which are graph manifolds having totally orientable graph structures with at most $n$ Seifert pieces.

Next, we assume that $Y$ is a cable space and $\delta\geq 2$.
Then, the Seifert fibration on $Y$ extends to that on $Y(\gamma)$ over a disk with two exceptional fibers. 
Hence, $Y(\gamma)$ has an incompressible boundary. 
We conclude that $M_0(\gamma)$ is an irreducible graph manifold which admits a totally orientable graph structure with $n+1$ Seifert pieces since $\overline{M_0\setminus Y}$ has such a decomposition with $n$ Seifert pieces.

Then we turn to the case where $Y$ is not a cable space and $\gamma$ is not parallel on $\partial M_0$ to the regular fiber of $Y$.
The Seifert fibration on $Y$ extends to a Seifert fibration on the Dehn filling $Y(\gamma) = Y\cup V$ over an orientable base, where the base cannot be a disk with at most one cone point. 
Therefore, $Y(\gamma)$ is a Seifert manifold with an incompressible boundary. 
As in the previous case, $M_0(\gamma) = \overline{M_0\setminus Y} \cup Y(\gamma)$ is an irreducible graph manifold which admits a totally orientable graph structure with $n+1$ Seifert pieces.

Finally, we assume that $\gamma$ is parallel on $\partial M_0$ to the regular fiber of $Y$. 
Since $Y$ is irreducible, any essential sphere in $Y(\gamma)$ comes from an essential properly embedded planar surface in $Y$ with boundary curves parallel to $\gamma$ in $\partial M_0$. 
Such a surface must be isotopic to a vertical annulus which is a union of fibers. 
Then a maximal collection of disjoint, non-parallel, essential vertical annuli, properly embedded in $Y$ with boundaries in $\partial M_0$ gives a collection of essential, non-parallel, embedded $2$-spheres in $Y(\gamma)$ which splits $Y(\gamma)$ as a connected sum of prime Seifert manifolds $S_i$ with orientable bases. 
Since $M_0(\gamma)$ is a rational homology sphere, none of these prime Seifert manifolds $S_i$ can be homeomorphic to $S^1 \times S^2$ and so they are irreducible.

Since the underlying space of the base of $Y$ is planar, an essential annulus in such a maximal collection surrounds either a single singular fiber or a single boundary component of $\partial Y \setminus \partial M_0$. 
In the first case, the associated irreducible summand $S_i$ of $Y(\gamma)$ is a lens space, while in the second case it is a Seifert manifold with a single boundary component. 
Therefore, $M_0(\gamma)$ is a connected sum of finitely many lens spaces and finitely many closed manifolds obtained by gluing a connected component $N_i$ of $\overline{M_0\setminus Y}$ and one of the Seifert manifolds, say $S_i$, along a torus component in $\partial Y \setminus \partial M_0 = \partial \overline{M_0\setminus Y}$.
The graph manifold $N_i$ admits a totally orientable graph structure with at most $n$ Seifert pieces. 

Hence, if $\partial S_i$ is incompressible, then $N_i \cup S_i$ is an irreducible rational homology sphere which is a graph manifold having a totally orientable graph structure with at most $n+1$ Seifert pieces. 
If $\partial S_i$ is compressible, then $S_i$ is a solid torus and the induction hypothesis implies that $N_i \cup S_i$ is a connected sum of closed, irreducible, rational homology spheres which are graph manifolds having totally orientable graph structures with at most $n$ Seifert pieces. 
In summary, the discussion above shows that $M_0(\gamma)$ is a connected sum of closed, irreducible, rational homology spheres which are graph manifolds having totally orientable graph structures with at most $n+1$ Seifert pieces.
\end{proof}

Combining Theorem~\ref{thm:graphtohyperbolic} with \cite[Theorem~VI.1.6(i)]{JaSh79}, one has the next corollary.

\begin{corollary}\label{cor:noepimorphism}
The fundamental group of a rational homology $3$-sphere which is a graph manifold cannot surject onto the fundamental group of a closed orientable irreducible $3$-manifold whose JSJ pieces are all hyperbolic.
\end{corollary}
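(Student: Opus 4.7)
The plan is to derive Corollary~\ref{cor:noepimorphism} immediately from Theorem~\ref{thm:graphtohyperbolic} by verifying that the fundamental group $G = \pi_1(N)$ of any closed orientable irreducible $3$-manifold $N$ whose JSJ pieces are all hyperbolic satisfies the two hypotheses of that theorem: $G$ is torsion-free, and the centralizer in $G$ of any non-trivial element is abelian.

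For torsion-freeness, I would observe that each JSJ piece of $N$ is a finite-volume hyperbolic $3$-manifold with incompressible toral boundary, hence aspherical, and that a closed $3$-manifold obtained by gluing aspherical pieces along incompressible tori is itself aspherical; consequently $\pi_1(N)$ is torsion-free. For the centralizer condition, I would invoke \cite[Theorem~VI.1.6(i)]{JaSh79} (already cited at the opening of this section, see also \cite{Fri11}), which asserts that under this hypothesis the centralizer in $\pi_1(N)$ of any non-trivial element is isomorphic to $\Z$ or $\Z\oplus\Z$, and is therefore abelian.

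With these two properties in hand, Theorem~\ref{thm:graphtohyperbolic} applies to $G = \pi_1(N)$: for any rational homology $3$-sphere $M$ that is a graph manifold, every homomorphism $\pi_1(M)\to\pi_1(N)$ is trivial. A surjection $\pi_1(M)\twoheadrightarrow \pi_1(N)$ would then force $\pi_1(N)=1$, which is excluded as soon as $N$ contains at least one hyperbolic JSJ piece $X$: the incompressibility of $\partial X$ embeds the non-trivial Kleinian group $\pi_1(X)$ into $\pi_1(N)$. Since the substantive content of the corollary lies entirely in Theorem~\ref{thm:graphtohyperbolic}, I anticipate no real obstacle in this argument; the proof is simply the packaging of that theorem together with the Jaco--Shalen structural result on centralizers in $3$-manifold groups.
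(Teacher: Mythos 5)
Your proposal is correct and follows the paper's own route exactly: the paper deduces the corollary by combining Theorem~\ref{thm:graphtohyperbolic} with the observations made at the start of the section (asphericity, hence torsion-freeness, plus the Jaco--Shalen centralizer theorem giving abelian centralizers), just as you do, and your added remark that $\pi_1(N)\neq 1$ rules out a surjection is the implicit final step.
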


Using Corollary~\ref{cor:noepimorphism} and Lemma~\ref{lem:epicover}, we prove Theorem~\ref{thm:arb} concerning an arborescent link $L$ with $\det L \neq 0$.

\begin{proof}[Proof of Theorem~\ref{thm:arb}]
Let $L$ be an arborescent link with $\det L \neq 0$ and suppose $L \succeq L'$. 
If $L'$ is not prime, then it is a connected sum $L_1 \sharp L_2$ or a split union $L_1 \sqcup L_2$, and thus we have $L \succeq L_1$ and $L \succeq L_2$. 
It follows from Lemma~\ref{lem:folklore}(2) that $\Sigma_2(L_1 \sharp L_2) = \Sigma_2(L_1) \sharp \Sigma_2(L_2)$ and $\Sigma_2(L_1 \sqcup L_2) = \Sigma_2(L_1) \sharp (S^1 \times S^2) \sharp \Sigma_2(L_2)$. 
Therefore, it suffices to prove Theorem~\ref{thm:arb} when $L'$ is prime, which is equivalent to $\Sigma_2(L')$ being irreducible by Lemma~\ref{lem:folklore}(3).
By Lemma~\ref{lem:epicover}, the epimorphism $\varphi\colon G^\orb(L) \twoheadrightarrow G^\orb(L')$ induces an epimorphism $\tilde{\varphi}\colon \pi_1(\Sigma_2(L)) \twoheadrightarrow \pi_1(\Sigma_2(L'))$. 
Since $\Sigma_2(L)$ is a graph manifold which is a rational homology sphere and $\Sigma_2(L')$ is irreducible, Corollary~\ref{cor:noepimorphism} implies that a  JSJ piece of $\Sigma_2(L')$ must be Seifert fibered.
\end{proof}

\section{Small links and finiteness}
\label{sec:small}

In this section, we prove Theorem~\ref{thm:small}. 
Unless otherwise stated, surfaces are assumed to be orientable. 
We recall that a compact properly embedded surface $F$ in a link exterior $E(L)$ is \emph{essential} if it is incompressible and no component of $F$ is parallel to a subsurface of $\partial E(L)$ (cf.~Definition~\ref{def:compressible}).
A link $L \subset S^3$ is said to be \emph{small} if its exterior $E(L)$ does not contain any compact, properly embedded, essential surface whose boundary is empty or a collection of meridian curves. 
A small link is prime or the unknot.

If $L$ is not a split link, $E(L)$ is irreducible and a properly embedded essential surface $F$ in $E(L)$ is boundary-incompressible, see \cite[Lemma~2.1]{Oer84}.
If $\Sigma_2(L)$ does not contain any incompressible surface, then the link $L$ is small by \cite[Theorem~1]{GoLi84}. 
The converse is not true. 

\begin{remark}
Frequently, a link is said to be \emph{small} if there is no essential closed surface and \emph{meridionally small} if there is no essential surface in its exterior with non-empty boundary whose boundary forms parallel copies of the meridian. 
We note that our definition of small requires both of the above small and meridionally small properties.
In the knot case, if $K$ is small, then it is meridionally small by \cite[Theorem~2.0.3]{CGLS87}. 
\end{remark}

\subsection{Proof of Theorem~\ref{thm:small}(1)}
The following lemma is crucial for proving the assertion (1).

\begin{lemma}\label{lem:small}
A link $L \subset S^3$ is small if and only if the orbifold $\OO(L)$ is small. 
\end{lemma}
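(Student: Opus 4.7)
The plan is to work through a natural bijective correspondence between closed orientable $2$-suborbifolds $F$ of $\OO(L)$ (in general position with respect to the singular locus) and properly embedded orientable surfaces $F' = F \cap E(L)$ in $E(L)$ whose boundary is a (possibly empty) disjoint union of meridional circles of $L$. Cone points of $F$ correspond bijectively to boundary circles of $F'$, and the inverse construction caps off each meridional boundary by a meridional disk inside the tubular neighborhood of $L$, meeting $L$ transversally in one order-$2$ cone point. Under this correspondence I would match up essentiality on the two sides.

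A preliminary observation is that $\OO(L)$ is irreducible if and only if $L$ is prime or the unknot. The only bad $2$-orbifold whose cone points all have order $2$ is $S^2(2)$, which cannot embed in $\OO(L)$ since an embedded $2$-sphere in $S^3$ meets $L$ in an even number of points. The orientable spherical suborbifolds that can actually arise, $S^2$ and $S^2(2,2)$, bound discal $3$-suborbifolds exactly when $L$ is non-split and locally trivial, respectively.

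For ``$L$ small $\Rightarrow$ $\OO(L)$ small,'' I would argue by contrapositive: assuming an essential closed orientable $2$-suborbifold $F \subset \OO(L)$, show $F' = F \cap E(L)$ is essential in $E(L)$. For incompressibility, any candidate compressing disk $D \subset E(L)$ for $F'$ is a disk-type discal $2$-suborbifold of $\OO(L)$ whose boundary $\partial D$ bounds no discal $2$-suborbifold in $F$. Indeed, bounding a disk in $F$ would yield a disk in $F'$ (disks in $F$ avoid cone points), contradicting the compression hypothesis; bounding a disk-with-cone-point in $F$ would make $\partial D$ cobound in $F'$ an annulus with a meridional boundary of $F'$, producing a disk in $E(L)$ with meridional boundary, which is impossible since meridians are primitive in $H_1(E(L); \Z)$. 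Hence $D$ would compress $F$ in $\OO(L)$, a contradiction. For non-boundary-parallelism, a boundary-parallel annulus component of $F'$ would cap to an $S^2(2,2)$ component of $F$ bounding a discal $3$-suborbifold containing a trivial arc of $L$, contradicting essentiality; a ball-bounding sphere component is handled analogously.

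For the converse ``$\OO(L)$ small $\Rightarrow$ $L$ small,'' the delicate point is that the capping $F$ of an essential surface $F' \subset E(L)$ may admit a compressing disk-with-cone-point in $\OO(L)$, corresponding only to a meridional compressing annulus for $F'$ and hence not contradicting essentiality of $F'$. The remedy is to first replace $F'$ with an essential surface $F''$ that is moreover meridionally incompressible, obtained by iteratively compressing along annuli having one boundary on $F''$ and one boundary a meridian on $\partial E(L)$; the iteration terminates because each such compression increases $\chi^{\orb}$ of the capping surface by $+1$, while $\chi^{\orb}$ is bounded above by $2$. Granting the standard fact that compression along such a meridional annulus preserves essentiality, the capping of $F''$ then admits no disk-type or disk-with-cone-point compression, and does not bound a discal $3$-suborbifold (which would force $F''$ to have a boundary-parallel annulus or ball-bounding sphere component), so it is essential in $\OO(L)$---contradicting smallness. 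The main obstacle I foresee is verifying cleanly that this iterative meridional-compression procedure preserves essentiality while tracking multiple components through the process.
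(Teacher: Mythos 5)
Your proposal follows essentially the same route as the paper's proof: the same capping correspondence between closed $2$-suborbifolds of $\OO(L)$ and properly embedded surfaces in $E(L)$ with meridional boundary, the same compressing-disk versus disk-with-one-cone-point analysis (your homological argument that a meridian bounds no disk replaces the paper's sphere-meeting-$L$-once parity argument, a minor variation), and, for the converse, the same key reduction to an essential surface admitting no meridional compressing annulus, with termination controlled by Euler-characteristic bookkeeping. The fact you ``grant'' (that a meridional annulus-compression leaves some essential component) is exactly what the paper's Claim~\ref{claim:annuluscompression} invokes \cite[Proposition~6]{Dun88} for, so your argument matches the paper's and has no additional gap.
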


\begin{proof}
Let $L$ be a link such that the orbifold $\OO(L)$ is not small.
We first consider the case where $\OO(L)$ is reducible, that is, there is an essential elliptic 2-orbifold $S$ embedded in $\OO(L)$.
If $S\cap L =\emptyset$, then $S \subset E(L)$ does not bound a ball in $E(L)$. 
Therefore, $S$ splits $S^3$ in two balls $B_1$ and $B_2$ such that $B_i \cap L \neq \emptyset$ and $L$ is a split link, which is not a small link.
If $S \cap L \neq \emptyset$, then $S = S^2(2,2)$ is a football with two singular points whose branching indices are $2$.
Since $S$ is an essential $2$-suborbifold, it does not bound a $3$-orbifold whose underlying space is $B^3$ and the singular locus is an unknotted arc.
It follows that the underlying space of $S$ is a $2$-sphere that splits $L$ as a non-trivial connected sum, and thus $L$ is not a small link.

We next consider the case where $\OO(L)$ is irreducible.
By the assumption, $\OO(L)$ contains an embedded closed essential $2$-suborbifold $F$.
If $F \cap L =\emptyset$, then $F \subset E(L)$ is incompressible in $E(L)$ since it is incompressible in $\OO(L)$. 
Moreover, $F$ cannot be boundary parallel in $E(L)$, otherwise $F$ would be the boundary of a regular neighborhood of a component of the singular locus of $\OO(L)$. 
In this case, a meridian curve on $F$ would bound a discal $2$-suborbifold in $\OO(L)$, contradicting the incompressibility of $F$.
If $F \cap L \neq \emptyset$, then $F' = F \cap E(L)$ is a properly embedded surface whose boundary is a collection of meridian curves.
If the surface $F'$ is compressible in $E(L)$, then there is an embedded disk $D \subset E(L)$ such that $D \cap F' = \partial D = D \cap F$ is an essential curve on $F'$.
Since $F$ is incompressible in $\OO(L)$, this curve must bound a discal $2$-suborbifold $\Delta \subset F$ with one singular point.
Therefore, $D \cup \Delta$ would be an embedded $2$-sphere in $S^3$ which meets $L$ in a single point.
This is not possible because an embedded $2$-sphere is separating in $S^3$.
Moreover, no component of $F'$ is parallel to a subsurface of $\partial E(L)$ since such a component would be compressible in $\OO(L)$.
Hence, $L$ is not small.
We have proved that being small for $L$ implies that the orbifold $\OO(L)$ is small.
To prove the converse, we need the following claim.

\begin{claim}\label{claim:annuluscompression}
If $L$ is not a small link, its exterior $E(L)$ contains a properly embedded essential orientable surface $S$ whose boundary is empty or a collection of meridian curves and which does not admit a meridional compressing annulus.
\end{claim}

Here, a \emph{meridional compressing annulus} for a properly embedded surface $S$ in $E(L)$ is an embedded annulus $A = S^1 \times [0,1]$ such that
\begin{enumerate}[label=(\roman*)]
\item $S^1 \times \{0\} = A \cap S$ is an essential, non-boundary-parallel curve on $S$ which does not bound any compressing disk for $S$;
\item $S^1 \times \{1\} = A \cap \partial E(L)$ is a meridian curve.
\end{enumerate}

Let us assume that $L$ is not a small link.
By the Claim~\ref{claim:annuluscompression}, $E(L)$ contains a properly embedded essential surface $S$ whose boundary is empty or a collection of meridian curves and which does not admit any meridional compressing annulus.
Let $F \subset \OO(L)$ be the closed embedded $2$-suborbifold obtained by gluing each boundary component of $S$ to the corresponding meridian disk with a singular point of branching index $2$.
Suppose, to the contrary, that $F$ is compressible in $\OO(L)$, that is, there is a compressing disk $\Delta$ for $F$.
It must be a discal $2$-orbifold with a singular point of branching index $2$ since $S= F \cap E(L)$ is essential in $E(L)$. 
But such a compressing disk $\Delta$ corresponds to a meridional compressing annulus $A = \Delta \cap E(L)$ for $S$, which is impossible by the choice of $S$.
Hence, the orbifold $\OO(L)$ contains an essential closed orientable $2$-suborbifold and so is not small.
\end{proof}

\begin{proof}[Proof of Claim~\ref{claim:annuluscompression}]
If $L$ is not small, $E(L)$ contains a properly embedded essential surface $S$ whose boundary is empty or a collection of meridian curves. 
A meridional annulus-compression on $S$ increases the number of boundary components while fixing the Euler characteristic of $S$.
Moreover, some component of the resulting surface must be essential in $E(L)$ by \cite[Proposition~6]{Dun88}.
Therefore, the process must stop after finitely many meridional annulus-compressions, and some component $S'$ of the resulting surface is essential and no longer admits any meridional annulus-compression.
\end{proof}

\begin{proof}[Proof of Theorem~\ref{thm:small}(1)]
Let $L$ be a small link and $L'$ a link such that $L \succeq L'$. 
Suppose, to the contrary, that the link $L'$ is not small.
By Lemma~\ref{lem:small}, the $3$-orbifold $\OO(L')$ is not small, that is, it contains an orientable closed incompressible $2$-suborbifold $F$.
Therefore, its orbifold fundamental group $G^\orb(L')$ splits along the orbifold fundamental group $\pi_{1}^\orb(F)$ as an amalgamated free product or an HNN extension. 
In particular, $G^\orb(L')$ acts non-trivially on the Bass-Serre tree $\mathcal{T}$, without edge inversions, associated to this algebraic decomposition.
The epimorphism $\varphi\colon G^\orb(L) \twoheadrightarrow G^\orb(L') $ induces a non-trivial action of the group $G^\orb(L)$ on the Bass-Serre tree $\mathcal{T}$ without edge inversions.
It follows from \cite[Corollary~10.2]{Yok16} that the good orbifold $\OO(L)$ contains an orientable incompressible $2$-suborbifold, and thus it is not small.
By Lemma~\ref{lem:small}, this contradicts the assumption that $L$ is a small link.
\end{proof}

\begin{corollary}\label{cor:knot_order}
Let $K$ and $K'$ be two knots such that $K \succeq K'$.
\begin{enumerate}[label=\textup{(\arabic*)}]
\item If $K$ is a Montesinos knot with three rational tangles, then $K'$ is the unknot, a $2$-bridge knot, or a Montesinos knot with three rational tangles.
\item If $K$ is a $(p, q)$-torus knot with $p$ and $q$ odd, then $K'$ is the unknot or a $(p', q')$-torus knot with $p'$ and $q'$ odd.
\end{enumerate}
\end{corollary}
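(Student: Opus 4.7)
The plan is to read off the possible shapes of $K'$ from Theorem~\ref{thm:order} and then trim them using three inputs: the one-component hypothesis, smallness of $K$ combined with Theorem~\ref{thm:small}(1), and a determinant constraint obtained from the induced epimorphism on $H_1$ of the double branched covers.

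For part~(1), since $K$ is a Montesinos knot with $r=3$ rational tangles, Theorem~\ref{thm:order}(3) leaves four possibilities for $K'$: the unknot, a $2$-bridge link, a Montesinos link with $r' \leq 4$ rational tangles, or a connected sum of $n_1$ $2$-bridge links and $n_2$ elliptic Montesinos links with $n_1+2n_2\leq 2$. Proposition~\ref{prop:firstproperties}(1) forces $K'$ to be a knot, and Remark~\ref{rem:rcomp} discards the case $r'=4$, since that would require $K'$ to have $r=3$ components. A Montesinos knot with three rational tangles is small (as noted in the paragraph after Theorem~\ref{thm:small}), so Theorem~\ref{thm:small}(1) makes $K'$ small, hence prime or the unknot; no non-trivial connected sum survives. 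What remains matches exactly the three options claimed in~(1), after absorbing the degenerate low-$r'$ Montesinos cases into ``$2$-bridge knot'' and ``unknot''.

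For part~(2), the torus knot $K=T(p,q)$ with $p,q$ odd is a Seifert knot (its exterior Seifert fibers over a disk with two cone points of orders $p$ and $q$), and its double branched cover is the Brieskorn homology sphere $\Sigma(2,p,q)$, so $\det K=1$. I would apply Theorem~\ref{thm:order}(4), which lists five shapes for $K'$, and discard the connected-sum case by Theorem~\ref{thm:small}(1) together with the smallness of torus knots. The epimorphism $\tilde{\varphi}\colon \pi_1(\Sigma_2(K))\twoheadrightarrow \pi_1(\Sigma_2(K'))$ constructed at the start of the proof of Theorem~\ref{thm:order} abelianizes to an epimorphism on $H_1$, whence $H_1(\Sigma_2(K');\Z)=0$ and $\det K'=1$. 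Three sub-cases remain. A $2$-bridge knot with trivial $H_1$ of $\Sigma_2$ is the unknot, since a lens space with trivial $H_1$ is $S^3$. An elliptic Montesinos knot with this property has $\Sigma_2(K')$ equal to the Poincar\'e sphere $\Sigma(2,3,5)$, the unique spherical $\Z$-homology sphere other than $S^3$, whose covering involution has branch locus $T(3,5)$, so $K'=T(3,5)$ (both parameters odd). A Seifert knot in $S^3$ is a torus knot $T(p',q')$ by the classical Burde--Murasugi classification~\cite{BuMu70}, and $\det T(p',q')=1$ precisely when both $p'$ and $q'$ are odd.

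The principal obstacle is not a single hard step but the careful combination of three classical external inputs that I would cite rather than prove: (i)~smallness of Montesinos knots with three rational tangles and of torus knots, (ii)~the uniqueness of the Poincar\'e sphere among spherical $\Z$-homology spheres and the identification of the quotient by its covering involution as the pair $(S^3,T(3,5))$, and (iii)~the Burde--Murasugi result that a Seifert fibered knot exterior in $S^3$ belongs to a torus knot. Given these, the corollary reduces to a direct bookkeeping argument against Theorems~\ref{thm:order} and~\ref{thm:small}.
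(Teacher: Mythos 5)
Your proposal is correct and follows essentially the same route as the paper: smallness of $K$ together with Theorem~\ref{thm:small}(1) forces $K'$ to be prime or trivial, part~(1) is then read off from Theorem~\ref{thm:order}(3) and Remark~\ref{rem:rcomp}, and part~(2) from Theorem~\ref{thm:order}(4) combined with the fact that $\Sigma_2(K')$ is an integral homology sphere, the uniqueness of $T(3,5)$ as the knot with the Poincar\'e sphere as double branched cover, and the parity criterion for torus-knot determinants. The only difference is cosmetic: you make explicit the Burde--Murasugi identification of Seifert knots with torus knots and the $\det K'=1$ bookkeeping, which the paper leaves implicit.
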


\begin{proof} 
(1)
A Montesinos knot with three rational tangles is a small knot by \cite[Corollary~4.(a)]{Oer84}, hence $K'$ must be a small knot by Theorem~\ref{thm:small}(1). 
Since a small knot is prime, then by Theorem~\ref{thm:order}(3) and Remark~\ref{rem:rcomp}, $K'$ is the unknot, a $2$-bridge knot, or a Montesinos knot with $r' \leq 3$ rational tangles, and now $r' = 3$.

(2)
A torus knot is a small knot since the only essential properly embedded surface in its exterior is a fibered annulus. 
Hence $K'$ must be a small knot by Theorem~\ref{thm:small}(1) and then prime. 
By Theorem~\ref{thm:order}(4), $K'$ is the unknot, a $2$-bridge knot, an elliptic Montesinos knot, or a torus knot. 
Now, $K$ is a $(p, q)$-torus knot with $p$ and $q$ odd, and thus its $2$-fold branched cover $\Sigma_2(K)$ is the Brieskorn integral homology sphere $\Sigma (2, p, q)$, see \cite[Theorem~7.12]{JaNe83}. 
Hence $K'$ cannot be a $2$-bridge knot. 
If $K'$ is an elliptic Montesinos knot, then its $2$-fold branched cover $\Sigma_2(K')$ is a Seifert fibered integral homology sphere with finite fundamental group. Hence $\Sigma_2(K')$ must be the Poincar\'{e} sphere and $K'$ is the $(3, 5)$-torus knot (see \cite[Affirmation~2.5]{BoOt91}). 
Moreover, if $K'$ is a $(p', q')$-torus knot, then $p'$ and $q'$ must be odd by \cite[Theorem~7.12]{JaNe83} since $\Sigma_2(K')$ is an integral homology sphere.
\end{proof}

\subsection{Small links and determinants}\label{subsec:notsmall}
First, we prove the following proposition.

\begin{proposition}\label{prop:infinite} 
A link $L \subset S^3$ $\pi$-dominates every $2$-bridge link if and only if $\det L = 0$.
\end{proposition}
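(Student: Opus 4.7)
The plan is to translate $L \succeq L'$ for a $2$-bridge link $L'$ into an equivariance condition on $H_1(\Sigma_2(L);\Z)$. Recall that for any $2$-bridge link $L'$ one has $\Sigma_2(L') \cong L(p, q)$ where $p = \det L' \geq 1$, so $\pi_1(\Sigma_2(L')) \cong \Z/p\Z$ and $G^\orb(L') \cong D_p := \Z/p\Z \rtimes \Z/2\Z$ (the dihedral group of order $2p$, with $\Z/2\Z$ acting by inversion); moreover, every $p \geq 1$ arises as the determinant of some $2$-bridge link. For the ``only if'' direction, if $\det L \neq 0$ then $H_1(\Sigma_2(L);\Z)$ has finite order $|\det L|$, and the standard argument opening the proof of Theorem~\ref{thm:order} shows that any epimorphism $\varphi\colon G^\orb(L) \twoheadrightarrow D_p$ restricts to an epimorphism $\tilde{\varphi}\colon \pi_1(\Sigma_2(L)) \twoheadrightarrow \Z/p\Z$. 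This must factor through $H_1(\Sigma_2(L);\Z)$, so $p$ divides $|\det L|$, and any $2$-bridge link with determinant exceeding $|\det L|$ fails to be $\pi$-dominated by $L$.

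For the ``if'' direction, the key fact is that the covering involution $\tau$ acts as $-\mathrm{id}$ on $H_1(\Sigma_2(L);\Q)$: indeed the transfer identifies the $\tau$-invariants with $H_1(S^3;\Q) = 0$, so the $(+1)$-eigenspace vanishes. Because $\tau_*$ has order two on the free abelian group $F := H_1(\Sigma_2(L);\Z)/\mathrm{torsion}$, this forces $\tau_* = -\mathrm{id}$ on $F$ as well. When $\det L = 0$ the group $H_1(\Sigma_2(L);\Z)$ is infinite, so $F$ is non-trivial; projecting to any $\Z$-summand and reducing modulo $p$ yields, for each $p \geq 1$, a surjection $\tilde{\phi}\colon \pi_1(\Sigma_2(L)) \twoheadrightarrow \Z/p\Z$ factoring through $H_1$ and satisfying $\tilde{\phi}(\bar{\mu} \gamma \bar{\mu}^{-1}) = -\tilde{\phi}(\gamma)$ for every $\gamma \in \pi_1(\Sigma_2(L))$ and every meridian $\bar{\mu}$, since conjugation by $\bar{\mu}$ in $G^\orb(L)$ induces $\tau_*$ on $H_1$.

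It remains to promote $\tilde{\phi}$ to an epimorphism $\phi\colon G^\orb(L) \twoheadrightarrow D_p$. Since any meridian $\bar{\mu}$ has order two in $G^\orb(L)$ and maps to the non-trivial element of $\Z/2\Z$, the sequence $1 \to \pi_1(\Sigma_2(L)) \to G^\orb(L) \to \Z/2\Z \to 1$ splits, realizing $G^\orb(L)$ as a semidirect product $\pi_1(\Sigma_2(L)) \rtimes \ang{\bar{\mu}}$. I would then set $\phi(\gamma) = \tilde{\phi}(\gamma)$ for $\gamma \in \pi_1(\Sigma_2(L))$ and $\phi(\bar{\mu}) = s$, the order-two generator of the $\Z/2\Z$ factor of $D_p$. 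The anti-equivariance of $\tilde{\phi}$ matches exactly the inversion action of $\Z/2\Z$ on $\Z/p\Z$ inside $D_p$, so $\phi$ is a well-defined homomorphism; surjectivity is immediate since $\tilde{\phi}(\pi_1(\Sigma_2(L))) = \Z/p\Z$ and $\phi(\bar{\mu}) = s$ together generate $D_p$. The main point requiring care is the equivariance computation---establishing $\tau_* = -\mathrm{id}$ on the torsion-free part of $H_1(\Sigma_2(L);\Z)$ and checking that the two semidirect-product structures are compatible under $\phi$---since once that is in place, the remaining steps reduce to formal manipulations.
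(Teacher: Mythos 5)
Your proof is correct and follows essentially the same route as the paper: the nonzero-determinant obstruction via divisibility in one direction, and in the other the semidirect-product structure $G^\orb(L)\cong \pi_1(\Sigma_2(L))\rtimes\Z/2\Z$ together with the fact that the covering involution acts by $-\id$ on homology, yielding surjections onto dihedral groups. The only cosmetic differences are that you establish $\tau_*=-\id$ on the free part of $H_1(\Sigma_2(L);\Z)$ by a transfer argument where the paper cites Fox's theorem for all of $H_1$, and you map directly onto each finite dihedral group where the paper factors through the infinite dihedral group $\Z\rtimes_{-\id}\Z/2\Z\cong\Z/2\Z\ast\Z/2\Z$.
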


\begin{proof}
Suppose that a link $L$ $\pi$-dominates all the $2$-bridge links. 
Then the determinant of $L$ is divisible by any non-zero natural number. 
Hence, it should be zero.

Conversely, suppose $\det L = 0$.
Then, there is an epimorphism from the homology group $H_1(\Sigma_2(L); \Z)$ onto $\Z$. 
Let $\tau_\ast$ and $\tau_\sharp$ denote the automorphisms induced by the covering involution $\tau$ of 
$\Sigma_2(L)$ on $\pi_1(\Sigma_2(L))$ and $H_1(\Sigma_2(L); \Z)$, respectively.
By \cite[Theorem~1]{Fox72}, we have $\tau_\sharp = -\id$. 
Hence, defining an action of $\Z/2\Z$ on $\Z$ by $-\id$, we conclude that the epimorphisms $\pi_1(\Sigma_2(L)) \to H_1(\Sigma_2(L); \Z) \to \Z$ are $\Z/2\Z$-equivariant. Therefore, one gets a commutative diagram
\[
\xymatrix{
G^\orb(L) \cong \pi_1(\Sigma_2(L)) \rtimes_{\tau_\ast} \Z/2\Z \ar@{->>}[r] \ar@{->>}@<3em>[d]_-{\varphi_1} & \Z/2\Z \ar[d]^-{\id} \\
\Z/2\Z \ast \Z/2\Z \cong \Z \rtimes_{-\id} \Z/2\Z \ar@{->>}[r] & \Z/2\Z.}
\]
So $L$ $\pi$-dominates the split link $U \sqcup U$, which in turn $\pi$-dominates every $2$-bridge link because of the commutative diagram
\[
\xymatrix{
\Z/2\Z \ast \Z/2\Z \cong \Z \rtimes_{-\id} \Z/2\Z \ar@{->>}[r] \ar@{->>}@<2em>[d]_-{\varphi_2} & \Z/2\Z \ar[d]^-{\id} \\
D_{2n} \cong \Z/n\Z \rtimes_{-\id} \Z/2\Z \ar@{->>}[r] & \Z/2\Z.}
\]
This completes the proof.
\end{proof}

\begin{remark} 
By definition, a link $L$ $\pi$-dominates a $2$-bridge link if and only if the link group $G(L)$ admits an epimorphism onto a dihedral group which sends the meridians to reflections. 
In \cite{IIMS25}, the authors consider more general epimorphisms from link groups onto dihedral groups, in which certain meridians are mapped to reflections and the rest to rotations.
Such epimorphisms do not exist for knot groups. 
Moreover, they do not factorize through the $\pi$-orbifold group of the link, and hence do not correspond to a $\pi$-domination. 
It is proved in \cite{IIMS25} that a link with at least three components admits such an epimorphism on a dihedral group of order $2n$ for any $n \geq 3$.
The same result for a link with two components is true if and only if the linking number of the two components is even.
\end{remark}

The following property of small links will be useful for the proof of Theorem~\ref{thm:small}(2).

\begin{corollary}\label{cor:nonzerodeterminant}
A small link $L \subset S ^3$ has a non-vanishing determinant.
\end{corollary}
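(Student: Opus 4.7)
The plan is to assume for contradiction that $\det L = 0$ and derive a contradiction with the smallness of $L$ by exhibiting a single non-small link that $L$ is forced to $\pi$-dominate, namely the two-component trivial link $U \sqcup U$. The two tools I would combine are Proposition~\ref{prop:infinite} (which connects $\det L = 0$ to an epimorphism onto the infinite dihedral group) and Theorem~\ref{thm:small}(1) (which propagates smallness under $\succeq$).

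First I would observe that $U \sqcup U$ is not small: by definition a small link is prime or the unknot, and $U \sqcup U$ is neither. Concretely, a $2$-sphere separating the two unknotted components is an essential, properly embedded, closed surface in $E(U \sqcup U)$. Next I would identify the $\pi$-orbifold group of $U \sqcup U$ as the infinite dihedral group. Using Lemma~\ref{lem:folklore}(2) together with the split-sum expression $U \sqcup U = U \sharp (U \sqcup U)$ used implicitly in the proof of Theorem~\ref{thm:order}(3), one has $\Sigma_2(U \sqcup U) \cong S^3 \sharp (S^1 \times S^2) \sharp S^3 \cong S^1 \times S^2$, so $\pi_1(\Sigma_2(U \sqcup U)) \cong \Z$; the covering involution acts on $H_1$ by $-\id$ by Fox's theorem \cite[Theorem~1]{Fox72}, hence
\[
G^\orb(U \sqcup U) \cong \Z \rtimes_{-\id} \Z/2\Z \cong \Z/2\Z \ast \Z/2\Z.
\]
But the proof of Proposition~\ref{prop:infinite} constructs precisely an epimorphism $G^\orb(L) \twoheadrightarrow \Z/2\Z \ast \Z/2\Z \cong G^\orb(U \sqcup U)$ under the assumption $\det L = 0$, giving $L \succeq U \sqcup U$.

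Assuming $L$ is small, Theorem~\ref{thm:small}(1) would then force $U \sqcup U$ to be small, contradicting the first observation and proving $\det L \neq 0$. There is no serious obstacle; the only point to verify is that Theorem~\ref{thm:small}(1) applies with a non-prime target $L' = U \sqcup U$. Its proof only requires that $\OO(L')$ be non-small, and this is evident here because the separating $2$-sphere does not bound a discal $3$-suborbifold on either side (each side contains a full singular circle, not a singular arc). This reducing spherical $2$-suborbifold produces the non-trivial splitting of $G^\orb(U \sqcup U)$ as $\Z/2\Z \ast \Z/2\Z$ on which the Bass-Serre argument via \cite[Corollary~10.2]{Yok16} operates unchanged.
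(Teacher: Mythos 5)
Your proof is correct, but it follows a genuinely different route from the paper. You reduce the statement to results already established earlier in the paper: from $\det L=0$ you extract (via the proof of Proposition~\ref{prop:infinite}) an epimorphism $G^\orb(L)\twoheadrightarrow \Z/2\Z\ast\Z/2\Z\cong G^\orb(U\sqcup U)$, i.e.\ $L\succeq U\sqcup U$, and then invoke Theorem~\ref{thm:small}(1) together with the fact that the $2$-component trivial link is not small (it is split, hence neither prime nor the unknot). Your identification of $G^\orb(U\sqcup U)$ is fine (even more directly, $G(U\sqcup U)$ is free on two meridians, so killing their squares gives $\Z/2\Z\ast\Z/2\Z$), the target of Theorem~\ref{thm:small}(1) is not required to be prime, and there is no circularity: Theorem~\ref{thm:small}(1) and Proposition~\ref{prop:infinite} are both proved independently of Corollary~\ref{cor:nonzerodeterminant}, which is only used later, in the proof of Theorem~\ref{thm:small}(2). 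The paper instead argues directly with character varieties: smallness of $\OO(L)$ (Lemma~\ref{lem:small}) forces $\X(\OO(L))$ to be $0$-dimensional by orbifold Culler--Shalen theory (\cite{BoZh98}, \cite{Yok16}), while the surjection onto the infinite dihedral group would inject an infinite family of dihedral characters with distinct traces into $\X(\OO(L))$, a contradiction. Your argument is shorter and more conceptual given the results already proved (``a small link cannot $\pi$-dominate the non-small unlink''), though it ultimately rests on the same Yokoyama splitting machinery hidden inside the proof of Theorem~\ref{thm:small}(1); the paper's direct character-variety argument has the advantage of exhibiting concretely how $\det L=0$ conflicts with the finiteness of $\X(\OO(L))$, which is exactly the tool reused immediately afterwards in the proof of Theorem~\ref{thm:small}(2).
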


\begin{proof}
Let $L \subset S^3$ be a small link.
By Lemma~\ref{lem:small}, the associated $3$-orbifold $\OO(L)$ is small.
It follows from $\PSL(2, \C)$-Culler-Shalen theory in the setting of $3$-orbifolds that the $\PSL(2,\C)$-character variety $\X(\OO(L))$ is $0$-dimensional, and thus consists of finitely many points. 
See \cite{CuSh83}, \cite[Sections~ 3 and 4]{BoZh98}, and \cite[Corollary~10.2]{Yok16}.

Suppose, to the contrary, that $\det L = 0$.
By the proof of Proposition~\ref{prop:infinite}, $G^\orb(L)$ surjects onto the infinite dihedral group $\Z/2\Z \ast \Z/2\Z $. 
This induces an injection $\X(\Z/2\Z \ast \Z/2\Z)\to \X(\OO(L))$ between $\PSL(2,\C)$-character varieties.
Here, $\Z/2\Z \ast \Z/2\Z$ surjects onto all finite dihedral groups $D_{2n}$ and those can be realized as subgroups of $\PSL(2, \C)$ by \cite[C.6 and Theorem~C.9 in Chapter~V]{Mas88} (see also \cite[Lemma~4.7]{ReWa99}). 
For $n\geq 2$, the elliptic elements of order $n$ generating the normal subgroup $\Z/n\Z \subset D_{2n}$ have distinct traces.
Therefore, $\X(\Z/2\Z \ast \Z/2\Z)$ is infinite, which contradicts the fact that $\X(\OO(L))$ contains only finitely many points. 
\end{proof}

One can ask whether a link in $S^3$ could $\pi$-dominate only finitely many links with $\geq 3$ bridges, without any assumption on the determinant. 
However, it is not the case.

\begin{proposition}\label{prop:notsmall} 
There are infinitely many hyperbolic links $L^\ast$ in $S^3$ such that each of them $\pi$-dominates infinitely many links with at least three bridges, $L^\ast$ is not small, and $\det L^\ast =0$.
\end{proposition}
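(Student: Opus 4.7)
The plan is to exhibit an explicit infinite family of hyperbolic Montesinos links with vanishing determinant and to show each dominates infinitely many links with at least three bridges. By Corollary~\ref{cor:nonzerodeterminant}, the condition $\det L^\ast = 0$ automatically implies $L^\ast$ is not small, so this clause is free and the work lies in the construction and the dominations.

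For each prime $p \geq 5$, set $L_p^\ast := L(\tfrac{1}{p},\tfrac{1}{p},\tfrac{-2}{p})$, a Montesinos link with three rational tangles over the hyperbolic base $S^2(p,p,p)$ (since $\tfrac{3}{p}<1$). By the Bonahon--Siebenmann classification of Montesinos links with at least three rational tangles, $L_p^\ast$ has hyperbolic exterior (ruling out the torus-link and Seifert-exterior alternatives for these parameters). Its rational Euler number is $e_0 = \tfrac{1}{p} + \tfrac{1}{p} - \tfrac{2}{p} = 0$, so $\det L_p^\ast = |e_0|\prod_i \alpha_i = 0$. Distinct primes $p$ give pairwise non-equivalent $L_p^\ast$, providing infinitely many hyperbolic links with $\det = 0$.

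To show each $L_p^\ast$ $\pi$-dominates infinitely many links of bridge $\geq 3$, I would exploit the structure of $G^{\orb}(L_p^\ast)$. Since $e_0 = 0$, $\pi_1(\Sigma_2(L_p^\ast))$ fits into a central extension
\[
1 \to \Z \to \pi_1(\Sigma_2(L_p^\ast)) \to \Delta(p,p,p) \to 1,
\]
with central $\Z$ generated by a regular fiber $h$ and $\Delta(p,p,p)$ a cocompact hyperbolic Fuchsian triangle group. The covering involution $\tau$ inverts both $h$ and the triangle generators, giving $G^{\orb}(L_p^\ast) = \pi_1(\Sigma_2(L_p^\ast)) \rtimes \Z/2\Z$. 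Proposition~\ref{prop:infinite} already supplies dominations onto all $2$-bridge links via $D_\infty$, but more is true.

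The plan for producing infinitely many dominations of bridge $\geq 3$ is to vary the central quotient $\langle h^n\rangle$ together with compatible quotients of $\Delta(p,p,p)$; each such pair yields a new quotient group of $G^{\orb}(L_p^\ast)$, which I would identify with $G^{\orb}(L'_n)$ for a Montesinos link $L'_n$ with three rational tangles. A Montesinos knot or link with three rational tangles has bridge number equal to three (as used in the proof of Corollary~\ref{cor:bridge_number}), so each such $L'_n$ has bridge $\geq 3$, and infinitely many parameter choices yield pairwise non-equivalent $L'_n$.

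\textbf{Main obstacle.} The crux is realizing each chosen quotient group concretely as $G^{\orb}(L'_n)$ for an actual link $L'_n \subset S^3$, not merely as an abstract $3$-orbifold group. This reduces to checking that for infinitely many choices of the central and triangle-group quotients, the resulting Seifert fibered $3$-manifold admits a $\tau$-compatible orientation-reversing fiber-preserving involution with quotient orbifold $(S^3, L'_n)$ for a link $L'_n$ of bridge $\geq 3$. The flexibility of Montesinos parameters (the three rational slopes $\beta'_i/\alpha'_i$), combined with the rich quotient structure of the hyperbolic triangle group $\Delta(p,p,p)$, should provide the required infinitude of distinct targets.
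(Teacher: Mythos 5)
There is a genuine gap, and it is in the heart of the statement: producing, for each $L^\ast$, \emph{infinitely many} dominated links with at least three bridges. Your choice of $L_p^\ast=L(\tfrac1p,\tfrac1p,\tfrac{-2}p)$ does give $\det L_p^\ast=0$ (hence not small, by Corollary~\ref{cor:nonzerodeterminant}) and is plausibly hyperbolic, but the proposed mechanism for the dominations does not work. Killing $h^n$ produces a central extension of the triangle group $\Delta(p,p,p)$ by $\Z/n$; such a group can never be (the index-two subgroup of) a $\pi$-orbifold group of a link: if it were $\pi_1(\Sigma_2(L'))$, it would be an infinite $3$-manifold group with torsion, hence a non-trivial free product by \cite[Corollary~9.9]{Hem04} together with Lemma~\ref{lem:folklore}, and free products are centerless, contradicting the central $\Z/n$. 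If instead you keep $h$ of infinite order, the argument in the proof of Theorem~\ref{thm:order}(3) forces any aspherical target to be Seifert fibered and forces an epimorphism of base groups $\Delta(p,p,p)\twoheadrightarrow\Delta(a,b,c)$. Since every generator of $\Delta(p,p,p)$ has prime order $p$, its images are non-trivial of order $p$, and Knapp's classification (as used in Proposition~\ref{prop:minimal_example}) pins the target down to $\Delta(p,p,p)$ itself; by Hopficity the base map is an isomorphism, and then the rational Euler number of the target Seifert manifold must also vanish, leaving only finitely many Seifert targets over $S^2(p,p,p)$, hence finitely many Montesinos links $L'$. Connected-sum targets are equally constrained: any epimorphism onto a centerless free product factors through $\Delta(p,p,p)$, whose generators have order $p$, so the only possible free-product image is $\Z/p\ast\Z/p$, again finitely many links. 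So not only is the realization step (which you flag as the ``main obstacle'') missing; Theorem~\ref{thm:order}(3) plus this rigidity strongly suggests each $L_p^\ast$ $\pi$-dominates only \emph{finitely} many links with at least three bridges, i.e.\ the examples themselves are unlikely to satisfy the statement.

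The paper's route is different and avoids this rigidity entirely: it starts from a split link $L=L_1\sqcup L_2$ with $b(L_2)\ge 2$, for which $\det L=0$ and
\[
L_1\sqcup L_2\ \succeq\ (U\sqcup U)\sharp L_2\ \succeq\ L(\tfrac pq)\sharp L_2
\]
for \emph{every} $2$-bridge link $L(\tfrac pq)$; the targets $L(\tfrac pq)\sharp L_2$ have at least three bridges by Schubert's formula, giving the required infinitude at once. Hyperbolicity of the source is then obtained not by choosing Montesinos links but by Kawauchi's almost identical imitations $L^\ast$ of $(S^3,L)$, whose imitation maps induce epimorphisms $G^\orb(L^\ast)\twoheadrightarrow G^\orb(L)$, so $L^\ast\succeq L(\tfrac pq)\sharp L_2$ for all $\tfrac pq$, while $\det L^\ast=0$ and non-smallness follows from Theorem~\ref{thm:small}(1). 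If you want to salvage your approach, you would need sources whose orbifold groups admit genuinely infinite families of three-bridge quotients, which is exactly what the connected-sum trick (exploiting the infinite dihedral quotient coming from $\det=0$) provides and what a three-tangle Montesinos group does not.
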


\begin{proof}
A split link $L = L_1 \sqcup L_2$ such that $L_2$ has $\geq 2$ bridges $\pi$-dominates infinitely many links with $\geq 3$ bridges. This follows from the fact that
\[
L_1 \sqcup L_2 \succeq U \sqcup L_2 = (U \sqcup U) \sharp L_2 \succeq L(\tfrac{p}{q}) \sharp L_2
\]
for every $2$-bridge link $L(\frac{p}{q})$.

Such a pair $(S^3, L)$ with $L = L_1 \sqcup L_2$ is a good pair in the sense of Kawauchi. 
It follows from Kawauchi's almost identical imitation theory that the pair $(S^3, L)$ admits almost identical imitations $(S^3, L^{\ast})$ which are hyperbolic links with the same number of components as $L$ (see \cite[Theorem~1.1]{Kaw89}). 
By the construction, the imitation map $q \colon (S^3, L^{\ast}) \to (S^3, L)$ has the property that the restriction $q\vert \colon (S^3, L^{\ast} \setminus{K^{\ast}}) \to (S^3, L \setminus{K})$ is homotopic to a diffeomorphism for any connected components $K^{\ast}$ of $L^{\ast}$ with $q(K^{\ast})= K$.
Hence, the imitation map induces a proper degree-one map from the exterior $E(L^{\ast})$ onto $E(L)$ which sends the meridians of $L^{\ast}$ to those of $L$. 
This degree-one map is $\pi_1$-surjective, and thus induces the commutative diagram
\[
\xymatrix{
G^\orb(L^\ast) \ar@{->>}[r] \ar@{->>}[d] & \Z/2\Z \ar[d]^-{\id} \\
G^\orb(L) \ar@{->>}[r] & \Z/2\Z.}
\]
It follows that $L^{\ast} \succeq L =  L_1 \sqcup L_2 \succeq  L(\frac{p}{q}) \sharp L_2$ for every $2$-bridge link $L(\frac{p}{q})$. 

Since $L^{\ast}$ $\pi$-dominates non-small knots by construction, Theorem~\ref{thm:small}(1) shows that $L^\ast$ cannot be small. 
Moreover, it follows from $\det L = 0$ that $\det L^\ast = 0$.
\end{proof}

The following question remains open.

\begin{question}
Is it possible for a link $L \subset S^3$ to $\pi$-dominate infinitely many prime links with at least three bridges?
\end{question}

\subsection{Proof of Theorem~\ref{thm:small}(2)}
\label{subsec:finiteness}

We prove now that a small link $L$ $\pi$-dominates only finitely many distinct links.
The proof uses the $\PSL(2,\C)$-character variety as in the article \cite{ReWa99} for the study of non-zero degree maps between non-Haken $3$-manifolds. 
We need to work in the setting of orbifolds since the $2$-fold branched cover of a small link may be a Haken manifold.
In the following, we write $\X(\OO(L)) = \X(G^\orb(L))$ for the $\PSL(2,\C)$-character variety of the orbifold $\OO(L)$ and $\X^\irr(\OO(L))$ for the subspace of irreducible characters (see \cite[Section~ 3]{BoZh98}).

Let $L \subset S^3$ be a small link.
By Lemma~\ref{lem:small}, the associated $3$-orbifold $\OO(L)$ is small.
It follows from $\PSL(2, \C)$-Culler-Shalen theory \cite[Section~ 4]{BoZh98} and \cite[Corollary~10.2]{Yok16} that the $\PSL(2,\C)$-character variety $\X(\OO(L))$ is $0$-dimensional, and thus consists of finitely many points.
By Theorem~\ref{thm:small}(1), any link $L'$ such that $L \succeq L'$ is small and the orbifold $\OO(L')$ is small by Lemma~\ref{lem:small}.
It follows that the orbifold $\OO(L')$ is irreducible and atoroidal.
Hence, by the geometrization theorem of $3$-orbifolds (see \cite[Theorem~2]{BoPo01}), $\OO(L)$ is hyperbolic or Seifert fibered with a small base which is either hyperbolic, Euclidean, elliptic or bad.
We distinguish three mutually exclusive cases according to the geometry of $\OO(L')$ and prove the finiteness result for each case.

We recall that when the orbifold $\OO(L')$ is hyperbolic, the link $L'$ is said to be $\pi$-hyperbolic.

\begin{claim}\label{claim:hyperbolic} 
A small link $L$ $\pi$-dominates at most $\vert \mathcal{X}^{\irr}(\mathcal{O}(L)) \vert$ $\pi$-hyperbolic links. 
\end{claim}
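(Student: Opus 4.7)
The plan is to construct an injection from the set of $\pi$-hyperbolic links $L'$ satisfying $L \succeq L'$, viewed up to mirror image, into the finite set $\X^{\irr}(\OO(L))$. For each such $L'$, I would choose an epimorphism $\varphi_{L'}\colon G^\orb(L) \twoheadrightarrow G^\orb(L')$ witnessing $L \succeq L'$, together with a discrete faithful representation $\rho_{L'}\colon G^\orb(L') \hookrightarrow \PSL(2,\C)$; the latter exists because $\OO(L')$ is a closed orientable hyperbolic $3$-orbifold (its $2$-fold cover $\Sigma_2(L')$ being hyperbolic, the orbifold inherits a hyperbolic structure by Mostow rigidity applied to the covering involution). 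The composition $\rho_{L'} \circ \varphi_{L'}$ is then an irreducible representation of $G^\orb(L)$, since its image coincides with the Kleinian lattice $\rho_{L'}(G^\orb(L'))$ and in particular admits no global fixed point on $\partial \mathbb{H}^3$. Let $\chi_{L'} \in \X^{\irr}(\OO(L))$ denote its character.

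The key step will be injectivity of $L' \mapsto \chi_{L'}$. Suppose $\chi_{L'} = \chi_{L''}$ for two $\pi$-hyperbolic links $\pi$-dominated by $L$. Because both representations are irreducible, equal characters force them to be conjugate in $\PSL(2,\C)$, and using surjectivity of $\varphi_{L'}$ and $\varphi_{L''}$ one sees that the images $\rho_{L'}(G^\orb(L'))$ and $\rho_{L''}(G^\orb(L''))$ are conjugate Kleinian groups, producing an abstract isomorphism $G^\orb(L') \cong G^\orb(L'')$. Now $\pi$-hyperbolicity of $L'$ implies that $\Sigma_2(L')$ is irreducible and is neither $S^3$ nor a lens space; hence by Lemma~\ref{lem:folklore}(1) and (3), combined with the characterization of $2$-bridge links as those whose $\pi$-orbifold group is dihedral (used in the proof of Theorem~\ref{thm:order}(2)), the link $L'$ is prime with at least three bridges, and the same holds for $L''$. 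The theorem of Boileau--Paoluzzi and Boileau--Zimmermann recalled in Section~\ref{subsec:order}, asserting that prime links with at least three bridges are determined up to mirror image by their $\pi$-orbifold groups, then yields $L' = L''$ up to mirror image.

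The stated bound then follows from the finiteness of $\X^{\irr}(\OO(L)) \subset \X(\OO(L))$ recalled just before the claim, which comes from $\PSL(2,\C)$-Culler--Shalen theory applied to the small orbifold $\OO(L)$. The most delicate point is the injectivity step: one must bridge the gap between equality of two characters in $\X^{\irr}(\OO(L))$ and equality of the target links. This is where $\pi$-hyperbolicity plays a double role, supplying the discrete faithful representation that makes the character construction possible and simultaneously ensuring that the bridge number hypothesis required by the Boileau--Paoluzzi/Boileau--Zimmermann rigidity is met. No additional rigidity input (such as Mostow rigidity of the images themselves) is needed, because the isomorphism $G^\orb(L') \cong G^\orb(L'')$ is already extracted at the level of $\PSL(2,\C)$-subgroups.
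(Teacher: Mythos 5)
Your proposal is correct and follows essentially the same route as the paper: compose the holonomy of the hyperbolic structure on $\OO(L')$ with the epimorphism to get an irreducible character in $\X^{\irr}(\OO(L))$, use irreducibility to recover the representation (hence, by faithfulness, $G^\orb(L')$) from the character, invoke the fact that the $\pi$-orbifold group determines the link, and conclude by finiteness of the character variety of the small orbifold. The only cosmetic difference is that the paper quotes the rigidity statement for $\pi$-hyperbolic links directly (\cite[Theorem~1]{BoZi89}), whereas you route through the prime-with-at-least-three-bridges version after checking that a $\pi$-hyperbolic link satisfies those hypotheses; both are fine.
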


\begin{proof}[Proof of Claim~\ref{claim:hyperbolic}]
We associate to each $\pi$-hyperbolic link $L'$ a faithful discrete representation $\rho'\colon \pi_1^\orb(\OO(L')) \to \PSL(2,\C)$ corresponding to the holonomy of the hyperbolic structure on $\OO(L')$.
The epimorphism $\varphi \colon G^\orb(L) \twoheadrightarrow G^\orb(L')$ induces an injective map $\varphi^*\colon \X(\OO(L'))\to \X(\OO(L))$ defined by $\varphi^{*} (\chi_{\rho'}) = \chi_{\rho'\circ \varphi}$.
Therefore, each $\pi$-hyperbolic link $L'$ $\pi$-dominated by $L$ gives an irreducible character $\varphi^{*} (\chi_{\rho'})\in \X^\irr(\OO(L))$, which determines the irreducible representation $\rho'\circ \varphi \colon G^\orb(L) \to \PSL(2,\C)$ up to conjugation in $\PSL(2,\C)$, and thus its image $\rho'(G^\orb(L'))$ up to isomorphism.
See \cite[Section~ 3]{BoZh98}.
Since the representation $\rho'$ is  faithful, the irreducible character $\varphi^{*} (\chi_{\rho'})\in \X^\irr(\OO(L))$ determines the $\pi$-orbifold group $G^\orb(L')$ up to isomorphism.
By \cite[Theorem~1]{BoZi89}, the $\pi$-orbifold group $G^\orb(L')$ determines the pair $(S^3, L')$ up to homeomorphism, hence the irreducible character $\varphi^{*} (\chi_{\rho'})\in \X^\irr(\OO(L))$  determines a unique $\pi$-hyperbolic link $L'$ up to equivalence.
Therefore, the number of distinct $\pi$-hyperbolic links $L'$ with $L \succeq L'$ is smaller than or equal to the number of irreducible characters in $\X^\irr(\OO(L))$.
\end{proof}

We consider now the case where $\mathcal{O}(L')$ is a small Seifert fibered orbifold. 
Then $L'$ is a small Seifert link or a small generalized Montesinos link. 
A generalized Montesinos link is the union of a classical Montesinos link with $g$ unknotted parallel components which surround the central band as in \cite[Figure~7]{BoZi85}. 
For $g \geq 1$, its $2$-fold branched cover is Seifert fibered with a non-orientable base of genus $g$.
See \cite{BoSi85}, \cite{Mon73}, \cite[Sections~4.7--4.8]{Mon87} for details.

When $L'$ is a small Seifert link, the base of $\mathcal{O}(L')$ is a $2$-dimensional orientable orbifold with underlying space $S^2$ and at most three conical singular points with cyclic local isotropy group $\Z/\alpha \Z$ with $\alpha > 1$. 
Then $ L'$ has at most three components and its $2$-fold branched cover is Seifert fibered with an orientable base $S^2$ and at most three exceptional fibers. 
If $L'$ is a small generalized Montesinos link, its $2$-fold branched cover is Seifert fibered with an orientable base $S^2$ and at most three exceptional fibers or with a non-orientable base of genus $1$ and at most one exceptional fiber (see \cite{Mon73}). 
In the last case, the $2$-fold branched cover is an elliptic Seifert fibered manifold which also carries a Seifert fibration with base $S^2$ and three exceptional fibers. 
In this case, it has been shown in \cite{Mon73} that the generalized Montesinos link is equivalent to a classical Montesinos link with three rational tangles (see also \cite[Section~2.2]{BoZi85}). 
Therefore, in all cases we can assume that $L'$ is a Seifert fibered link with at most three components or a Montesinos link with at most three rational tangles, and the $2$-fold branched cover $\Sigma_2(L')$ is a Seifert fibered $3$-manifold with base $S^2$ and at most three singular points. 
To prove the finiteness result when $\mathcal{O}(L')$ is Seifert fibered, the following result will be useful.

\begin{lemma}\label{lem:seifertfiniteness} 
Let $L\subset S^3$ be a Seifert or a Montesinos link. 
If the orbifold base $\mathcal{B}$ of the Seifert fibered $3$-orbifold $\OO(L)$ can take only finitely many topological types and $\det L$ is bounded, then $L$ can take only finitely many link types.
\end{lemma}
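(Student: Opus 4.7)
The plan is to reduce the classification of $L$ to its Seifert invariants via the $2$-fold branched cover and then count finitely many possibilities using the determinant formula.

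First, I would observe that the topological type of the orbifold base $\mathcal{B}$ records the number $r$ of cone points together with their cone orders $\alpha_1,\dots,\alpha_r \geq 2$. Thus the hypothesis that $\mathcal{B}$ realizes only finitely many topological types translates into uniform upper bounds on $r$, on each $\alpha_i$, and hence on the product $\prod_{i=1}^{r}\alpha_i$. In both the Montesinos and Seifert link cases treated in the excerpt, we may assume the underlying space of $\mathcal{B}$ is $S^2$ with at most three cone points.

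Next, I would invoke the formula recalled in Section~\ref{subsec:Montesinos},
\[
\det L \;=\; |H_1(\Sigma_2(L);\Z)| \;=\; |e_0|\prod_{i=1}^{r}\alpha_i,
\]
where $e_0 = \sum_{i=1}^r \beta_i/\alpha_i \in \Q$ is the rational Euler number of the Seifert fibration of $\Sigma_2(L)$. Since $\det L$ is bounded by hypothesis and $\prod_i \alpha_i$ is bounded, only finitely many values of $|e_0|$ can occur. Fixing such a value of $e_0$ together with a bounded tuple $(\alpha_1,\dots,\alpha_r)$, each $\beta_i$ is determined modulo $\alpha_i$, so each Seifert slope $\beta_i/\alpha_i \in \Q/\Z$ lies in a set of size $\alpha_i$. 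The set of tuples $(\beta_1/\alpha_1,\dots,\beta_r/\alpha_r) \in (\Q/\Z)^r$ summing to $e_0$ in $\Q$ is therefore finite. Consequently, only finitely many Seifert data $\bigl(e_0;\,\beta_1/\alpha_1,\dots,\beta_r/\alpha_r\bigr)$ can arise.

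Finally, I would invoke the classification results to upgrade this finiteness of Seifert data to finiteness of link types. For Montesinos links, \cite[Theorem~12.26]{BZH14} asserts that the link $L(\tfrac{\beta_1}{\alpha_1},\dots,\tfrac{\beta_r}{\alpha_r})$ is determined, up to orientation-preserving homeomorphism of $S^3$, by $e_0$ and the unordered tuple of slopes up to dihedral permutation, so each Seifert datum yields only finitely many (indeed a bounded number of) Montesinos links. For Seifert links $L$, the $2$-fold branched cover $\Sigma_2(L)$ is determined up to homeomorphism by the same data by \cite{Orl72}, and the covering involution $\tau$ (which preserves fiber orientation in this case) is unique up to isotopy in the fiber-preserving mapping class group of $\Sigma_2(L)$; together these determine the pair $(S^3,L)$ up to homeomorphism. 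Therefore, in either case, only finitely many link types can occur.

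The main obstacle is the Seifert-link half of the argument, since the uniqueness of the covering involution up to fiber-preserving isotopy (and hence the passage from Seifert data of $\Sigma_2(L)$ back to the link $L$) must be handled separately from the Montesinos case treated by \cite[Theorem~12.26]{BZH14}. Once this is in place, the counting argument above goes through uniformly.
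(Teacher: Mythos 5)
Your overall strategy is the same as the paper's: bound the Seifert invariants of $\Sigma_2(L)$ (base, monodromy fractions $\beta_i/\alpha_i$, Euler number $e_0$) using the finitely many base types and the determinant formula $\det L = |e_0|\prod_i\alpha_i$, conclude that $\Sigma_2(L)$ takes finitely many topological types, and then pass from the cover back to the link. Two remarks on the first half. First, a minor imprecision: the hypothesis concerns the base $\mathcal{B}$ of the $3$-orbifold $\OO(L)$, whereas the $\alpha_i$ in the determinant formula are the exceptional-fiber data of the Seifert fibration of $\Sigma_2(L)$, whose base $\overline{\mathcal{B}}$ is either $\mathcal{B}$ or a $2$-fold orbifold cover of $\mathcal{B}$; the bounds on $r$ and the $\alpha_i$ still transfer, but you should say this (as the paper does) rather than identify the two bases. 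Your aside that one may assume $\mathcal{B}$ is $S^2$ with at most three cone points is neither needed nor accurate for the lemma as stated (nor even for all its applications, e.g.\ the Euclidean base $S^2(2,2,2,2)$); your general counting argument does not require it. Second, for the Montesinos half your use of \cite[Theorem~12.26]{BZH14} (the link is determined by $e_0$ and the slopes up to dihedral permutation) is a legitimate and slightly more direct route than the paper's, which treats both cases uniformly by counting involutions on the cover.

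The genuine gap is in the Seifert-link half. You assert that the covering involution $\tau$ is ``unique up to isotopy in the fiber-preserving mapping class group of $\Sigma_2(L)$,'' offer no argument, and yourself flag it as the main obstacle to be ``handled separately'' --- so the proof is incomplete exactly where real content is required. Moreover uniqueness is stronger than what is true in general and stronger than what is needed: a given Seifert fibered rational homology sphere can be the $2$-fold branched cover of more than one link, so one cannot expect a single conjugacy class of involutions with quotient $S^3$. What suffices, and what the paper invokes, is the finiteness statement: a closed orientable Seifert fibered $3$-manifold with orientable base admits only finitely many conjugacy classes of fiber-preserving involutions (after the orbifold theorem/\cite{MeSc86} one may assume the covering involution is fiber-preserving, and conjugate involutions yield equivalent links), so each of the finitely many manifolds $\Sigma_2(L)$ accounts for only finitely many Seifert or Montesinos links. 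Replacing your uniqueness claim by this finiteness of conjugacy classes (with a reference such as \cite{MeSc86} or \cite{BoSi85}) closes the gap and makes your argument agree with the paper's.
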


\begin{proof}
The classification of Seifert fibered closed oriented $3$-manifolds with orientable base shows that $\Sigma_2(L)$ is determined, up to orientation-preserving homeomorphism, by the topological type of its orbifold base $\mathcal{B}$, the local monodromy fractions $\frac{\beta_i}{\alpha_i} \in \Q/\Z$ associated to each exceptional fiber and the rational Euler number $e_0(\Sigma_2(L)) \in \Q$. 
The topological type of the orbifold base $\mathcal{B}$ of $\OO(L)$ determines finitely many possible orbifold bases $\overline{\mathcal{B}}$ for $\Sigma_2(L)$ since $\overline{\mathcal{B}}$ is either homeomorphic to $\mathcal{B}$ or a $2$-fold cover of $\mathcal{B}$. 
The orbifold base $\overline{\mathcal{B}}$ determines the order $\alpha_i > 1$ of each exceptional fiber of the Seifert manifold $\Sigma_2(L)$. 
Then, for each $\alpha_i$, there are at most $\alpha_i - 1$ integers $0 < \beta_i < \alpha_i$ relatively prime to $\alpha_i$, and hence there are at most $\alpha_i - 1$ distinct possible monodromy fractions $\frac{\beta_i}{\alpha_i} \in \Q/\Z$ associated to each exceptional fibers. 

Moreover, since $\det L = \vert H_1(\Sigma_2(L); \Z) \vert$ can take only finitely many values, the rational Euler number $e_0(\Sigma_2(L)) = \frac{\vert H_1(\Sigma_2(L); \Z) \vert}{\prod_i \alpha_i}$ takes only finitely many values. 
Therefore, each orbifold base $\overline{\mathcal{B}}$ determines at most finitely many Seifert fibered manifolds $\Sigma_2(L)$. 
It follows that given finitely many possible topological types for the orbifold base $\mathcal{B}$ of $\OO(L)$ and the upper bound on $\det L$, the $2$-fold branched cover $\Sigma_2(L)$ can take only finitely many topological types. 
Now, the lemma follows from the fact that there are only finitely many distinct Seifert fibered links and Montesinos links with the same $2$-fold branched cover since there are at most finitely many conjugacy classes of fiber-preserving involutions on a given closed orientable Seifert fibered $3$-manifold with orientable base.
\end{proof}

We first consider the case where the base of the Seifert fibered orbifold $\mathcal{O}(L')$ is hyperbolic.

\begin{claim}\label{claim:hyperbolicbase} 
A small link $L$ $\pi$-dominates at most finitely many links $L'$ such that $\mathcal{O}(L')$ is Seifert fibered with a hyperbolic base.
\end{claim}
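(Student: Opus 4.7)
The plan is to adapt the $\PSL(2,\C)$-character-variety argument used for the $\pi$-hyperbolic case in Claim~\ref{claim:hyperbolic}, replacing the holonomy of $\OO(L')$ by the holonomy of the hyperbolic base $\mathcal{B}'$, and then to reduce the finiteness to Lemma~\ref{lem:seifertfiniteness}.

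Given $L \succeq L'$ with $\OO(L')$ Seifert fibered over a closed hyperbolic $2$-orbifold $\mathcal{B}'$, I would first compose the Seifert projection $\pi \colon G^\orb(L') \twoheadrightarrow \pi_1^\orb(\mathcal{B}')$ (obtained by quotienting out by the class of the regular fiber) with the $\pi$-domination $\varphi \colon G^\orb(L) \twoheadrightarrow G^\orb(L')$ and a faithful discrete holonomy representation $\rho_0 \colon \pi_1^\orb(\mathcal{B}') \hookrightarrow \mathrm{Isom}(\mathbb{H}^2) \subset \mathrm{Isom}^+(\mathbb{H}^3) = \PSL(2,\C)$, obtained via the isometric extension across a totally geodesic $\mathbb{H}^2 \subset \mathbb{H}^3$. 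The resulting representation $\rho = \rho_0 \circ \pi \circ \varphi$ has image a cocompact subgroup of $\mathrm{Isom}(\mathbb{H}^2)$ with no global fixed point on $\partial \mathbb{H}^3 \cong \mathbb{CP}^1$, so $\rho$ is irreducible and its character $\chi_\rho$ lies in the set $\X^\irr(\OO(L))$. Finiteness of $\X(\OO(L))$ follows from smallness of $\OO(L)$ (Lemma~\ref{lem:small}) via $\PSL(2,\C)$-Culler--Shalen theory in the orbifold setting, exactly as in the proof of Corollary~\ref{cor:nonzerodeterminant}.

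Since $\rho_0$ is faithful, $\chi_\rho$ determines $\rho_0(\pi_1^\orb(\mathcal{B}'))$ up to conjugation in $\PSL(2,\C)$, hence $\pi_1^\orb(\mathcal{B}')$ up to isomorphism, and then $\mathcal{B}'$ up to homeomorphism, because a closed hyperbolic $2$-orbifold is determined by its orbifold fundamental group (the signature of the associated Fuchsian or NEC group is an algebraic invariant). The finitely many characters in $\X^\irr(\OO(L))$ therefore account for only finitely many topological types of the base $\mathcal{B}'$. To bound $\det L'$ I would use the induced surjection $H_1(\Sigma_2(L);\Z) \twoheadrightarrow H_1(\Sigma_2(L');\Z)$ together with Corollary~\ref{cor:nonzerodeterminant} (which gives $\det L \neq 0$) to conclude that $\det L'$ divides $\det L$, so $\det L'$ is bounded. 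Feeding these two pieces of information into Lemma~\ref{lem:seifertfiniteness} completes the proof.

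The main subtle point I expect to encounter is the irreducibility of $\rho$ in $\PSL(2,\C)$ when $\mathcal{B}'$ is non-orientable, where $\mathrm{Isom}(\mathbb{H}^2)$ is not contained in the orientation-preserving isometry group $\PSL(2,\R) \subset \PSL(2,\C)$ directly. I would handle this by restricting to the orientation subgroup of index two in $\pi_1^\orb(\mathcal{B}')$, which remains a cocompact Fuchsian group in $\PSL(2,\R)$ with no fixed point on $\mathbb{CP}^1$, and then observing that irreducibility on a finite-index subgroup implies irreducibility of $\rho$ since any global fixed point of $\rho$ would be a global fixed point of the restriction.
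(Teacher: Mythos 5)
Your proposal is correct and follows essentially the same route as the paper: compose the projection to the hyperbolic base with its holonomy to get an irreducible character pulled back into the finite set $\X^\irr(\OO(L))$, recover $\mathcal{B}'$ up to homeomorphism via faithfulness and Macbeath, bound $\det L'$ by divisibility using $\det L \neq 0$, and conclude with Lemma~\ref{lem:seifertfiniteness}. Your extra care with a possibly non-orientable base (embedding $\mathrm{Isom}(\mathbb{H}^2)$ in $\PSL(2,\C)$ via a totally geodesic plane and checking irreducibility on the index-two orientation subgroup) is a point the paper passes over silently, but it does not change the argument.
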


\begin{proof}[Proof of Claim~\ref{claim:hyperbolicbase}]
Since $L$ is small, its determinant $\det L$ does not vanish by Corollary~\ref{cor:nonzerodeterminant}. 
Let $L'$ such that $L \succeq L'$ and that the small orbifold $\mathcal{O}(L')$ is Seifert fibered with a hyperbolic base 
$\mathcal{B}'$. 
Using the quotient epimorphism $\pi_1^\orb(\OO(L'))  \twoheadrightarrow  \pi_1(\mathcal{B}')$, we associate to each such link $L'$  an irreducible representation
\[
\rho'\colon \pi_1^\orb(\OO(L'))  \twoheadrightarrow  \pi_1(\mathcal{B}') \to \PSL(2,\R) \subset \PSL(2,\C)
\]
given by the faithful discrete representation $\pi' \colon \pi_1(\mathcal{B}') \to \PSL(2,\R)$ corresponding to the holonomy of a hyperbolic structure on the $2$-orbifold base $\mathcal{B}'$.
Like in the hyperbolic case above, each link $L'$ $\pi$-dominated by $L$ gives an irreducible character $\varphi^{*} (\chi_{\rho'})\in \X^\irr(\OO(L))$ which determines the irreducible representation $\rho'\circ \varphi \colon G^\orb(L) \to \PSL(2,\C)$ up to conjugation in $\PSL(2,\C)$ and thus its image $\rho'(G^\orb(L')) = \pi'(\pi_1(\mathcal{B}'))$ up to isomorphism.
Since the representation $\pi'$ is faithful, the irreducible character $\varphi^{*} (\chi_{\rho'})\in \X^\irr(\OO(L))$  determines the orbifold group $\pi_1(\mathcal{B}')$ up to isomorphism.
A hyperbolic closed $2$-dimensional orbifold is determined up to homeomorphism by its orbifold fundamental group (see \cite{Mac65}), therefore the irreducible character $\varphi^{*} (\chi_{\rho'})\in \X^\irr(\OO(L))$ determines the base $\mathcal{B}'$ of the Seifert fibered orbifold $\OO(L')$ up to homeomorphism. 
Since there are at most finitely many irreducible characters in $\X^\irr(\OO(L))$, there are at most finitely many possible topological types for the orbifold base $\mathcal{B}'$ of the Seifert fibered orbifold $\OO(L')$. 
There is an epimorphism $\tilde{\varphi}\colon \pi_1(\Sigma_2(K)) \twoheadrightarrow \pi_1(\Sigma_2(L'))$ and thus the order $\vert H_1(\Sigma_2(L'); \Z) \vert$ of the first homology group of $\Sigma_2(L')$ divides $\vert H_1(\Sigma_2(L); \Z) \vert$.
Therefore, the value $\det L'= \vert H_1(\Sigma_2(L'); \Z) \vert$ is bounded and the finiteness of the link types for $L'$ follows from Lemma~\ref{lem:seifertfiniteness}.
\end{proof}

When $\mathcal{O}(L')$ is Seifert fibered with an elliptic, Euclidean, or bad base, we have a more general result.

\begin{claim}\label{claim:ellipticeuclidean} 
Any link $L$ with $\det L \neq 0$ $\pi$-dominates at most finitely many links $L'$ such that $\mathcal{O}(L')$ is Seifert fibered with an elliptic, Euclidean, or bad orbifold base.
\end{claim}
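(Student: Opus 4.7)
The plan is to combine the determinant bound coming from the induced homology surjection with the classification of elliptic, Euclidean, and bad closed $2$-orbifolds, and then to conclude finiteness via Lemma~\ref{lem:seifertfiniteness}.

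First, as in the proofs of Claims~\ref{claim:hyperbolic} and \ref{claim:hyperbolicbase}, the epimorphism $\varphi\colon G^\orb(L)\twoheadrightarrow G^\orb(L')$ restricts to an epimorphism $\pi_1(\Sigma_2(L))\twoheadrightarrow \pi_1(\Sigma_2(L'))$ between the index-two subgroups, which abelianizes to a surjection $H_1(\Sigma_2(L);\Z)\twoheadrightarrow H_1(\Sigma_2(L');\Z)$. Since $\det L\neq 0$ the source is a finite abelian group of order $\det L$, so $\det L'$ is finite and divides $\det L$.

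Next, I would enumerate the closed $2$-orbifolds that are elliptic, Euclidean, or bad. The Euclidean ones form a finite explicit list. The elliptic and bad ones consist of a finite list of sporadic types together with the one-parameter families $S^2(n)$, $S^2(m,n)$ with $m\neq n$, $S^2(n,n)$, $S^2(2,2,n)$, $\mathbb{RP}^2(n)$ and their orientation-reversing quotients (for instance the disk with three corner reflectors of orders $2,2,n$ that arises as the base of $\OO(L')$ for a classical Montesinos link). The $2$-fold branched cover $\Sigma_2(L')$ is then Seifert fibered with base $\overline{\mathcal{B}'}$, a $2$-fold cover of $\mathcal{B}'$; its exceptional fiber orders $\alpha_i$ satisfy $\det L'=|e_0|\prod_i\alpha_i$ as recalled in Section~\ref{subsec:Montesinos}.

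The main obstacle is bounding the parameter $n$ in the infinite families, since the relation $\det L'=|e_0|\prod_i\alpha_i$ alone does not suffice (for example, the Montesinos family $L(\tfrac{1}{2},-\tfrac{1}{2},\tfrac{1}{n})$ has $\det=4$ for every odd $n\geq 3$). To handle this one uses the specific geometric structure on $\OO(L')$: if $\mathcal{B}'$ is elliptic or bad, then $\OO(L')$ is spherical and $G^\orb(L')$ is finite, so the composition $G^\orb(L)\twoheadrightarrow G^\orb(L')\twoheadrightarrow \pi_1^\orb(\mathcal{B}')$ produces a finite quotient of the finitely generated group $G^\orb(L)$; this quotient, together with the divisibility $\det L'\mid\det L$ and the rigidity of spherical $\pi$-orbifold structures, limits the isomorphism type of $\pi_1^\orb(\mathcal{B}')$ and hence $n$. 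If $\mathcal{B}'$ is Euclidean, it already lies in the finite list. Once $\mathcal{B}'$ is restricted to finitely many topological types and $\det L'$ is bounded, Lemma~\ref{lem:seifertfiniteness} gives that $L'$ takes only finitely many link types, completing the proof.
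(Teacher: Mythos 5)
Your overall skeleton matches the paper's: the restriction of $\varphi$ to the index-two subgroups gives $\det L' \mid \det L$, the elliptic/Euclidean/bad bases are enumerated, and finiteness is to come from Lemma~\ref{lem:seifertfiniteness}. You also correctly isolate the real difficulty, namely the one-parameter families, and your example $L(\tfrac12,-\tfrac12,\tfrac1n)$ shows exactly why the determinant relation alone cannot bound the parameter. But at precisely that point your argument stops being an argument. The sentence invoking ``a finite quotient of the finitely generated group $G^\orb(L)$ \dots\ together with the rigidity of spherical $\pi$-orbifold structures'' does not limit anything: a fixed finitely generated group, even with finite abelianization, can surject onto finite groups of unbounded order (for instance $\Z/4 \ast \Z/4$ surjects onto every generalized quaternion group $Q_{4n}$), so neither finite generation of $G^\orb(L)$, nor finiteness of $G^\orb(L')$, nor the divisibility $\det L' \mid \det L$ by itself excludes infinitely many bases of type $S^2(2,2,n)$ or $P^2(n)$, and no rigidity statement is actually formulated or used. (A smaller point: elliptic or bad base forces $G^\orb(L')$ finite only because $\det L' \neq 0$ forces the Euler number of the fibration to be non-zero; this should be said.) So the proposal has a genuine gap exactly at the step you yourself flagged as the main obstacle.

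For comparison, the paper first disposes of the bad and two-cone-point spherical bases ($\Sigma_2(L')$ is then a lens space, $L'$ is a $2$-bridge link, and $\det L' \mid \det L$ gives finiteness directly), observes that the remaining elliptic and Euclidean bases form a finite list except for $S^2(2,2,n)$ and $P^2(n)$, and in those two cases argues that $\Sigma_2(L')$ is a prism manifold and invokes Orlik's homology computation \cite[Theorem~2(ii), Section~6.2]{Orl72} to bound the exceptional-fiber order by $\vert H_1(\Sigma_2(L'));\Z)\vert = \det L'$, before concluding with Lemma~\ref{lem:seifertfiniteness}. What is missing from your write-up is precisely such a concrete bound on $n$ in terms of data controlled by $L$. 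Note, moreover, that your own family $P(2,-2,n)$ (determinant $4$, double branched cover a prism manifold over $S^2(2,2,n)$ with first homology of order $4$) shows that any bound of this kind is delicate and should be checked carefully against the cited statement rather than asserted via an appeal to rigidity; as it stands, the crucial case remains unproved in your proposal.
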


\begin{proof}[Proof of Claim~\ref{claim:ellipticeuclidean}] 
If the orbifold base of $\OO(L')$ is bad or spherical with two singular points, then the base of the $2$-fold branched cover $\Sigma_2(L')$ is a $2$-sphere with at most two singular points. 
Hence, it is a Lens space and $L'$ is a $2$-bridge link of type $\frac{p}{q}$ with $1 \leq q < p$ and $\gcd(p,q) = 1$ such that $p= \det L'$ divides $\det L \neq 0$. 
Therefore, $p$ takes only finitely many values. 
Moreover, there are at most $p - 1$ possible values for $q$, and thus at most finitely many possible distinct $2$-bridge links $L'$ of type $\frac{p}{q}$ with $1 \leq q < p$ and $\gcd(p,q) = 1$.

We suppose now that the orbifold base of $\OO(L')$ is elliptic with at least three singular points or Euclidean. 
Then the orbifold base $\overline{\mathcal{B}'}$ of $\Sigma_2(L')$ is elliptic or Euclidean and takes only finitely many topological types, except for the elliptic base $S^2(2, 2, n)$ with underlying space $S^2$ and three singular points of order $\{2, 2, n\}$ or $P^2(n)$ with underlying space the projective plane $P^2$ and a single singular point of order $n$. 
Therefore, if $\overline{\mathcal{B}'} \not \in \{ S^2(2, 2, n), P^2(n)\}$, the proof of Lemma~\ref{lem:seifertfiniteness} shows that there are finitely many possible link types for $L'$.

When $\overline{\mathcal{B}'} = S^2(2, 2,n)$, $\Sigma_2(L')$ is a prism manifold and the order $n$ of the exceptional fiber is bounded above by the order of the first homology group $\vert H_1(\Sigma_2(L'); \Z) \vert = \det L'$ which divides $\det L \neq 0$, see \cite[Theorem~2(ii), Section~6.2]{Orl72}. 
Therefore, the orbifold base of $\Sigma_2(L')$ can take only finitely many topological types, and the proof of Lemma~\ref{lem:seifertfiniteness} shows that $L'$ can take only finitely many link types in this case too. 
If $\overline{\mathcal{B}'} = P^2(n)$, $\Sigma_2(L')$ admits also a Seifert fibration with base $S^2(2, 2,n)$ and thus is a prism manifold, see \cite[Theorem~2(vi), Section~6.2]{Orl72}.
The finiteness result for $L'$ follows from the previous case.
\end{proof}

\section{$\pi$-minimal links}\label{sec:piminimal}

\begin{definition}
A link $L$ is said to be \emph{$\pi$-minimal} if $L \succeq L'$ implies that $G^{\orb}(L')$ is isomorphic to either $\Z/2\Z$, a dihedral group, or $G^{\orb}(L)$.
\end{definition}

By definition, the unknot and $2$-bridge links are $\pi$-minimal.
In contrast, Proposition~\ref{prop:notsmall} gives examples of non-$\pi$-minimal links $L$ with $\det L=0$.
The next proposition will be proved by Lemma~\ref{lem:minimalnotprime} and Proposition~\ref{prop:sequence}.

\begin{proposition} \label{prop:minimal}
Any link with at least three bridges $\pi$-dominates a $\pi$-minimal link with at least three bridges which is either prime or a connected sum of two $2$-bridge links.
\end{proposition}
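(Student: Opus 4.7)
The plan is to construct the desired $\pi$-minimal link by descending along $\succeq$ and then to identify its topological type. If $L$ is already $\pi$-minimal, we stop. Otherwise, by the definition of $\pi$-minimality, there exists $L_1$ with $L\succeq L_1$ such that $G^{\orb}(L_1)$ is not isomorphic to $\Z/2\Z$, not dihedral, and not isomorphic to $G^{\orb}(L)$; the first two exclusions force $b(L_1)\geq 3$ via parts~(1) and~(2) of Theorem~\ref{thm:order}. Iterating yields a chain
\[
L=L_0\succeq L_1\succeq L_2\succeq\cdots
\]
with every $b(L_i)\geq 3$ and every induced epimorphism $G^{\orb}(L_i)\twoheadrightarrow G^{\orb}(L_{i+1})$ strict. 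If this chain terminates, its last term is the desired $\pi$-minimal link with at least three bridges $\pi$-dominated by $L$.

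The heart of the argument is showing that every such chain must terminate; I would record this separately as Proposition~\ref{prop:sequence}. No single classical invariant (bridge number, determinant, rank of $\pi_1(\Sigma_2)$) is monotone along $\succeq$ in general---for instance, Proposition~\ref{prop:infinite} gives a link with $\det L=0$ dominating $2$-bridge links of arbitrarily large determinant---so one needs a composite, well-founded complexity on isomorphism classes of $\pi$-orbifold groups of links with $b\geq 3$. A natural candidate is the lexicographic pair formed by the rank of $\pi_1(\Sigma_2(L_i))$ and the number of prime summands of $\Sigma_2(L_i)$; combined with the hopfian and residually finite nature of $\pi$-orbifold groups and with the structural case analysis in Theorem~\ref{thm:order}, the hope is to prove that this complexity strictly drops at every non-iso step. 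Justifying this monotonicity without any smallness assumption on $L$ is the delicate point of the argument.

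Granting termination, let $L^{\ast}$ be the terminal link of the chain; it is $\pi$-minimal, has $b(L^{\ast})\geq 3$, and satisfies $L\succeq L^{\ast}$. It remains to check (as Lemma~\ref{lem:minimalnotprime}) that $L^{\ast}$ is prime or a connected sum of two $2$-bridge links. Suppose $L^{\ast}$ is not prime. By Theorem~\ref{thm:uniquefactor} and Proposition~\ref{prop:firstproperties}(3)--(5), $L^{\ast}$ $\pi$-dominates each of its prime factors $F$. Lemma~\ref{lem:folklore} writes $\pi_1(\Sigma_2(L^{\ast}))$ as a non-trivial free product, so the induced epimorphism $G^{\orb}(L^{\ast})\twoheadrightarrow G^{\orb}(F)$ has non-trivial kernel and therefore is not an isomorphism; $\pi$-minimality of $L^{\ast}$ then forces each $F$ to be the unknot or a $2$-bridge link. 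Finally, if $L^{\ast}$ had three or more non-trivial $2$-bridge summands, it would $\pi$-dominate a connected sum of just two of them---whose $\pi$-orbifold group is distinct from $\Z/2\Z$, from a dihedral group, and from $G^{\orb}(L^{\ast})$---contradicting $\pi$-minimality. Hence $L^{\ast}$ is a connected sum of exactly two $2$-bridge links, which has three bridges, completing the plan.
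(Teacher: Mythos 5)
The essential step of your plan---termination of every strictly descending chain $L_0\succeq L_1\succeq L_2\succeq\cdots$ of links with at least three bridges---is exactly the point you leave open, and the complexity you propose does not close it. The rank of $\pi_1(\Sigma_2(L_i))$ and the number of prime summands are only non-increasing along the induced epimorphisms $\pi_1(\Sigma_2(L_i))\twoheadrightarrow\pi_1(\Sigma_2(L_{i+1}))$; neither quantity need drop at a step where the groups are non-isomorphic (an epimorphism between fundamental groups of irreducible $3$-manifolds of the same rank, e.g.\ one induced by a degree-one map between hyperbolic manifolds, is typically not an isomorphism), so a lexicographic combination of them cannot rule out an infinite chain of pairwise non-isomorphic groups with constant complexity. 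In fact no elementary well-founded invariant of this kind is known to work; the paper obtains the stabilization statement (Proposition~\ref{prop:sequence}) by passing to the induced sequence of epimorphisms between the groups $\pi_1(\Sigma_2(L_i))$ and invoking the theorem of Groves--Hull--Liang that any infinite sequence of epimorphisms between fundamental groups of compact $3$-manifolds eventually contains an isomorphism. Without that input (or an equivalent deep result) your argument does not go through; with it, your descent scheme is precisely the paper's proof.

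Two secondary points in your identification of the terminal link $L^{\ast}$ also need attention. First, ``not prime'' includes the split case, and Theorem~\ref{thm:uniquefactor} as stated concerns non-split links; a split candidate with at least three bridges such as $U\sqcup L(\frac{p}{q})$ is not excluded by your factor argument. The paper's Lemma~\ref{lem:minimalnotprime} handles it directly, using that $G^{\orb}(U\sqcup L(\frac{p}{q}))=\Z/2\Z\ast D_{2p}$ surjects onto $D_{2q}\ast_{\Z/2\Z}D_{2p}$ for every $q$, so such a link is never $\pi$-minimal. Second, $\pi$-minimality is phrased in terms of abstract isomorphism of $\pi$-orbifold groups, so showing that the particular epimorphism $G^{\orb}(L^{\ast})\twoheadrightarrow G^{\orb}(F)$ onto a factor has non-trivial kernel is not by itself enough; you must either invoke hopficity of $G^{\orb}(L^{\ast})$ (which you mention in passing and the paper establishes via residual finiteness) or argue, as the paper does, that a non-trivial free product $\pi_1(\Sigma_2(L_1))\ast\pi_1(\Sigma_2(L_2))$ with both factors non-trivial is not isomorphic to either factor. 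These two points are fixable; the termination gap is the genuine one.
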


First, we determine the $\pi$-minimal links which are not prime.

\begin{lemma} \label{lem:minimalnotprime}
If a $\pi$-minimal link is not prime, then it is a connected sum of two $2$-bridge links or a $2$-component trivial link.
\end{lemma}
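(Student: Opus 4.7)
The plan is to split the problem into two disjoint cases according to whether the non-prime link $L$ is split or a non-trivial connected sum of prime factors, and in both cases to exploit $\pi$-minimality together with the hopfian property of $G^\orb(L)$ (from residual finiteness) and Grushko's theorem on ranks of free products.

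For the non-trivial connected sum case, I would write $L = L_1 \sharp \cdots \sharp L_n$ with $n \geq 2$ and each $L_i$ prime. Proposition~\ref{prop:firstproperties}(3) gives $L \succeq L_i$, and since $L_i \ne U$ we have $G^\orb(L_i) \ne \Z/2\Z$. By $\pi$-minimality, $G^\orb(L_i)$ is either dihedral (so $L_i$ is a $2$-bridge link) or isomorphic to $G^\orb(L)$. I would rule out the latter by composing the $\pi$-domination with the presumed isomorphism to obtain a surjective endomorphism of $G^\orb(L)$, which is then an isomorphism by the hopfian property. The argument at the beginning of the proof of Theorem~\ref{thm:order} shows that this isomorphism restricts to an isomorphism $\pi_1(\Sigma_2(L)) \cong \pi_1(\Sigma_2(L_i))$. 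Combined with $\pi_1(\Sigma_2(L)) \cong \pi_1(\Sigma_2(L_1)) \ast \cdots \ast \pi_1(\Sigma_2(L_n))$ from Lemma~\ref{lem:folklore}(2) and Grushko's theorem on ranks, this forces $\pi_1(\Sigma_2(L_j)) = 1$ for $j \ne i$, whence $L_j = U$ by the Smith conjecture, contradicting primeness. Hence every $L_i$ is a $2$-bridge link. To prove $n = 2$, I would use the grouping $L = (L_1 \sharp L_2) \sharp (L_3 \sharp \cdots \sharp L_n) \succeq L_1 \sharp L_2$; since $\pi_1(\Sigma_2(L_1 \sharp L_2)) \cong \Z/p\Z \ast \Z/q\Z$ is a non-cyclic free product of two non-trivial finite cyclic groups, $G^\orb(L_1 \sharp L_2)$ is neither $\Z/2\Z$ nor dihedral, and the same hopfian/Grushko argument forces $L_j = U$ for $j \geq 3$, a contradiction. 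Hence $n = 2$.

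For the split case, I would write $L = L_1 \sqcup L_2$, so that $\pi_1(\Sigma_2(L)) \cong \pi_1(\Sigma_2(L_1)) \ast \Z \ast \pi_1(\Sigma_2(L_2))$ by Lemma~\ref{lem:folklore}(2). Proposition~\ref{prop:firstproperties}(4) yields $L \succeq L_i$, and the same hopfian/Grushko argument rules out $G^\orb(L_i) \cong G^\orb(L)$ because the non-trivial free $\Z$ factor forces a strict inequality of ranks. Hence each $L_i$ is the unknot or a $2$-bridge link. If both equal $U$, then $L = U \sqcup U$, as desired. Otherwise, combining the identity $L_1 \sqcup L_2 = L_1 \sharp L_2 \sharp (U \sqcup U)$, Proposition~\ref{prop:infinite} (applicable since $\det(U \sqcup U) = 0$), and the compatibility of $\pi$-dominations with connected sums, I would obtain $L \succeq L_1 \sharp L_2 \sharp K$ for every $2$-bridge link $K$. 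Choosing $K$ non-trivial, the corresponding $2$-fold branched cover has $H_1$ a finite torsion abelian group, whereas $H_1(\Sigma_2(L); \Z)$ contains an infinite cyclic summand coming from the $S^1 \times S^2$ factor, ruling out isomorphism of the $\pi$-orbifold groups. Since $G^\orb(L_1 \sharp L_2 \sharp K)$ is also neither $\Z/2\Z$ nor dihedral, this contradicts $\pi$-minimality.

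The main technical obstacle will be the careful application of the hopfian/Grushko machinery, in particular verifying that the excess free factors of $\pi_1(\Sigma_2(L))$ arising from additional prime summands (in the connected sum case) or from the $S^1 \times S^2$ summand (in the split case) cannot be absorbed into $\pi_1(\Sigma_2(L_i))$ under any isomorphism. A secondary subtlety is the compatibility of $\pi$-dominations with connected sums, which requires meridians to map to meridians; in the present application this should follow because the dihedral epimorphism constructed in Proposition~\ref{prop:infinite} sends meridians to reflections, which are precisely the meridians of the target $2$-bridge link in the dihedral group.
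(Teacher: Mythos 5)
Your proposal is correct and runs on the same engine as the paper's proof: domination of factors/sublinks (Proposition~\ref{prop:firstproperties}), the hopfian property of $G^\orb$ together with a free-product rank (Grushko) argument to exclude $G^\orb(L')\cong G^\orb(L)$, and the ``$U\sqcup U$ $\pi$-dominates every $2$-bridge link'' trick to dispose of the split case; only the packaging differs. In the connected-sum case the paper states directly that a non-trivial free product $\pi_1(\Sigma_2(L_1))\ast\pi_1(\Sigma_2(L_2))$ is not isomorphic to a factor and gets ``exactly two summands'' for free from primeness of $2$-bridge links, whereas you spell out the hopfian/Grushko mechanism (a welcome expansion of the paper's terse sentence) and add a separate $n=2$ step. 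In the split case the paper uses $L_1\sqcup L_2\succeq L_1\sharp L_2$ to reduce to $L=U\sqcup L(\frac{p}{q})$ and then writes down the explicit quotients $D_{2q'}\ast_{\Z/2\Z}D_{2p}$, while you use sublink domination, Proposition~\ref{prop:infinite} grafted across the connected sum (your observation that the dihedral epimorphism sends meridians to reflections is exactly what legitimizes this, and it is the same computation as the paper's amalgam surjection), and a first-homology comparison. Two small repairs to your split case: since the paper counts the $2$-component trivial link as a $2$-bridge link, one of $L_1,L_2$ may be $U\sqcup U$, so $H_1(\Sigma_2(L_1\sharp L_2\sharp K);\Z)$ need not be a finite group; the comparison should be of free ranks (the splitting sphere contributes one more $\Z$ summand to $H_1(\Sigma_2(L);\Z)$ than the lens space $\Sigma_2(K)$ does), which still rules out the isomorphism. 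Likewise, your assertions that $G^\orb(L_1\sharp L_2)$ and $G^\orb(L_1\sharp L_2\sharp K)$ are not dihedral are cleanest via the characterization ``dihedral $\Leftrightarrow$ $2$-bridge'' invoked in Theorem~\ref{thm:order}(2), since a connected sum of non-trivial links is never $2$-bridge; the purely group-theoretic shortcut has a corner case (when both summands are Hopf links, $\pi_1(\Sigma_2)\cong\Z/2\Z\ast\Z/2\Z$ is itself infinite dihedral), which would then require, e.g., an abelianization count to finish.
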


\begin{proof}
A link that is not prime is either split or a non-trivial connected sum. 
By Proposition~\ref{prop:firstproperties}(4), a split link $L = L_1 \sqcup L_2$ $\pi$-dominates a connected sum $L_1 \sharp L_2$, and thus it is not minimal unless $L_1 = U$ is the unknot and $L_2 $ is a $2$-bridge link. 
Therefore, $L = U \sqcup L_2 = (U \sqcup U) \sharp L_2$. 
By the proof of Proposition~\ref{prop:infinite}, the split link $U \sqcup U$ $\pi$-dominates every $2$-bridge link, so $L = U \sqcup L_2$ $\pi$-dominates the connected sum of $L_2$ with every $2$-bridge link. 
It follows that $L = U \sqcup L_2$ cannot be $\pi$-minimal unless $L_2$ is the unknot and $L = U \sqcup U$ is the $2$-component trivial link.


Let $L= L_1 \sharp L_2$ with $L_1$ and $L_2$ not being the unknot. 
If $L$ is $\pi$-minimal, $L_1$ and $L_2$ must be $2$-bridge links since $L$ $\pi$-dominates $L_1$ and $L_2$ by Proposition~\ref{prop:firstproperties}(3), and $G^{\orb}(L)$ is not isomorphic to $G^{\orb}(L_1)$ or to  $G^{\orb}(L_1)$. 
This follows from the fact that $\pi_1(\Sigma_2(L)) = \pi_1(\Sigma_2(L_1))\ast \pi_1(\Sigma_2(L_2))$ is not isomorphic to $\pi_1(\Sigma_2(L_1))$ or $\pi_1(\Sigma_2(L_2))$ when none of these groups is trivial. 
\end{proof}

Next, we show the existence of $\pi$-minimal links with at least three bridges.

\begin{proposition}\label{prop:sequence}
Let $L_1 \succeq L_2 \succeq \cdots \succeq L_i \succeq L_{i+1} \succeq \cdots ,$ be a $\pi$-ordered sequence of links with at least three bridges in $S^3$. 
Then there exists some integer $n_0$ such that $G^{\orb}(L_{n_0}) \cong G^{\orb}(L_{n})$ for $n \geq n_0$.
\end{proposition}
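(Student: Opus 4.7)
The plan is to reduce the proposition, via the Hopf property of $\pi$-orbifold groups, to showing that only finitely many isomorphism classes appear in the sequence $(G^{\orb}(L_n))_n$, and then to bound those classes using the $\PSL(2,\C)$-character variety of $\OO(L_1)$.

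First, write $G_n = G^{\orb}(L_n)$ and compose the given epimorphisms into $\varphi_n\colon G_1 \twoheadrightarrow G_n$, with kernels $K_1 \subseteq K_2 \subseteq \cdots$ forming an ascending chain of normal subgroups of $G_1$; the proposition is equivalent to the statement that this chain is eventually constant. Each $G_n$ is finitely generated, and residually finite since $\pi_1(\Sigma_2(L_n)) \subseteq G_n$ has index two and closed $3$-manifold groups are residually finite by \cite{Hem87}; consequently every $G_n$ is Hopfian. If $G_n \cong G_m$ for some $n \leq m$, then composing the surjection $G_n \twoheadrightarrow G_m$ with any isomorphism $G_m \to G_n$ gives a surjective endomorphism of $G_n$, which by the Hopf property must be an automorphism; therefore the original surjection is an isomorphism and $K_k = K_n$ for all $n \leq k \leq m$. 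Consequently, if only finitely many isomorphism types occur in $(G_n)_n$, pigeonholing over any infinite subsequence of a single type forces the chain $(K_n)$ to stabilize.

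To bound the number of isomorphism types, I would first stabilize the natural monotone invariants. By Proposition~\ref{prop:firstproperties}(1), $\#L_n$ is non-increasing, and the induced surjections $\pi_1(\Sigma_2(L_n)) \twoheadrightarrow \pi_1(\Sigma_2(L_m))$ imply that $\rank \pi_1(\Sigma_2(L_n))$ and $\dim_{\F_p} H_1(\Sigma_2(L_n); \F_p)$ are non-increasing at each prime $p$; after passing to a tail I would assume all such invariants are constant. Second, each $\varphi_n$ induces a closed embedding $\varphi_n^{\ast}\colon \X(\OO(L_n)) \hookrightarrow \X(\OO(L_1))$, and the images form a descending chain of Zariski-closed subsets of the affine algebraic set $\X(\OO(L_1))$; by Noetherianity this chain stabilizes to some $V \subseteq \X(\OO(L_1))$.

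The \textbf{main obstacle} is converting stabilization of $\X(\OO(L_n))$, together with the stabilized combinatorial invariants, into a bound on the isomorphism types of the groups $G_n$ themselves. I would attack this via a case analysis along the (orbifold) geometric decomposition of $\OO(L_n)$: when $\OO(L_n)$ is small, Theorem~\ref{thm:small}(2) directly yields finiteness; when $\OO(L_n)$ admits essential closed $2$-suborbifolds, one would analyze each geometric JSJ piece by its holonomy character inside $V$ in the hyperbolic case, and by finite Seifert data constrained by the stabilized rank and by the divisors of $\det L_1$ in the Seifert case, in the spirit of Claims~\ref{claim:hyperbolic}--\ref{claim:ellipticeuclidean} from the proof of Theorem~\ref{thm:small}(2). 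Combined with the Hopf reduction above, this would complete the proof.
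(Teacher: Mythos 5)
Your opening reduction is fine: $G^{\orb}(L_n)$ is residually finite, hence hopfian, so if only finitely many isomorphism types occurred among the $G^{\orb}(L_n)$ the chain of kernels in $G^{\orb}(L_1)$ would stabilize. But the entire content of the proposition is the step you yourself label the ``main obstacle,'' and the sketch you offer for it does not go through. First, you cannot hope to bound the isomorphism types by bounding the set of links $\pi$-dominated by $L_1$: Proposition~\ref{prop:notsmall} produces links that $\pi$-dominate \emph{infinitely many} links with at least three bridges, and finiteness is open even with $\det L_1\neq 0$; so any proof must use the chain structure itself. Second, the Noetherian stabilization of the images $\varphi_n^{*}(\X(\OO(L_n)))$ only says that eventually every $\PSL(2,\C)$-character of $G^{\orb}(L_n)$ factors through $G^{\orb}(L_{n+1})$; to conclude that the kernel of $G^{\orb}(L_n)\twoheadrightarrow G^{\orb}(L_{n+1})$ is trivial you would need these groups to be residually detected by $\PSL(2,\C)$-representations, which $\pi$-orbifold groups are not in general (graph-manifold cases, for instance). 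The character-variety finiteness arguments you want to imitate (Claims~\ref{claim:hyperbolic}--\ref{claim:ellipticeuclidean}) work only because smallness forces $\X(\OO(L))$ to be zero-dimensional; for a general $L_1$ the variety can be positive-dimensional, its stabilized value $V$ does not control the JSJ pieces of $\OO(L_n)$ (holonomies of individual pieces need not extend to characters of the whole group) nor the gluing data, and your proposed Seifert bound ``by divisors of $\det L_1$'' collapses when $\det L_1=0$, a case the proposition does not exclude. Your plan does work when $L_1$ is small, by Theorem~\ref{thm:small}, but there is no smallness hypothesis here.

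The paper's proof is different and much shorter: each $\varphi_n$ carries the index-two subgroup $\pi_1(\Sigma_2(L_n))$ onto $\pi_1(\Sigma_2(L_{n+1}))$, so the chain induces an infinite sequence of epimorphisms of closed $3$-manifold groups, and by the theorem of Groves, Hull, and Liang \cite[Theorem~B]{GHL24} such a sequence eventually consists of isomorphisms; since an epimorphism of $\pi$-orbifold groups restricting to an isomorphism on these index-two subgroups is itself an isomorphism, the sequence $G^{\orb}(L_n)$ stabilizes. This external input is exactly what replaces your missing step, and I do not see how to recover it from the tools you list.
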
 

\begin{proof}
By Lemma~\ref{lem:epicover}, the sequence of epimorphisms
\[
G^{\orb}(L_1) \xrightarrow{\varphi_1}  G^{\orb}(L_2) \xrightarrow{\varphi_2 }\cdots \xrightarrow{\varphi_{i-1}}  G^{\orb}(L_i) \xrightarrow{\varphi_i} \cdots
\]
induces a sequence of epimorphisms 
\[
\pi_1(\Sigma_2(L_1)) \xrightarrow{\tilde{\varphi}_1}\pi_1(\Sigma_2(L_2)) \xrightarrow{\tilde{\varphi}_2} \cdots \xrightarrow{\tilde{\varphi}_{i-1}} \pi_1(\Sigma_2(L_i))  \xrightarrow{\tilde{\varphi}_i} \cdots.
\]
By a deep result of Groves, Hull, and Liang~\cite[Theorem~B]{GHL25}, this infinite sequence of epimorphisms between the fundamental groups of compact $3$-manifolds eventually contains an isomorphism. 
Therefore, the sequence of epimorphisms between orbifold groups stabilizes.
\end{proof}

\begin{proof}[Proof of Proposition~\ref{prop:minimal}]
By Proposition~\ref{prop:sequence}, any link $L$ with at least three bridges $\pi$-dominates a $\pi$-minimal link $L'$ with at least three bridges. 
Otherwise starting with $L = L_1$ one could build a $\pi$-ordered  infinite sequence $L_1 \succeq  \cdots \succeq L_n \succeq \cdots$
of links with at least three bridges such that $G^{\orb}(L_{n}) \not \cong G^{\orb}(L_{m})$ for $n \neq m$. 
If the $\pi$-minimal link $L'$ is not prime, then it is a connected sum of two $2$-bridge links or a $2$-component trivial link by Lemma~\ref{lem:minimalnotprime}. 
The link $L'$ cannot be a $2$-component trivial link since it has at least three bridges, and hence it must be a connected sum of two $2$-bridge links.
\end{proof}

The following criterion of $\pi$-minimality can be useful.

\begin{lemma}\label{lem:minimal2fold} 
Let $L \subset S^3$ be a link with at least three bridges.
If $\pi_1(\Sigma_2(L))$ does not surject onto the fundamental group of a closed orientable $3$-manifold which is neither trivial, cyclic, nor isomorphic to $\pi_1(\Sigma_2(L))$, then $L$ is $\pi$-minimal.
\end{lemma}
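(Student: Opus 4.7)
The plan is to run a short diagram chase starting from any $\pi$-domination $L \succeq L'$. First I would invoke, as in the opening paragraph of the proof of Theorem~\ref{thm:order}, the fact that an epimorphism $\varphi\colon G^\orb(L) \twoheadrightarrow G^\orb(L')$ must send some meridian outside the index-$2$ subgroup $\pi_1(\Sigma_2(L'))$, so $\varphi$ fits into a commutative diagram of $\Z/2\Z$-extensions in which the left-hand arrow is a surjection $\tilde\varphi\colon \pi_1(\Sigma_2(L)) \twoheadrightarrow \pi_1(\Sigma_2(L'))$ and the right-hand arrow $\bar\varphi$ is the identity on $\Z/2\Z$. Since $\pi_1(\Sigma_2(L'))$ is the fundamental group of a closed orientable $3$-manifold and $\tilde\varphi$ is surjective, the hypothesis of the lemma leaves only two possibilities: (a) $\pi_1(\Sigma_2(L'))$ is cyclic, or (b) $\pi_1(\Sigma_2(L')) \cong \pi_1(\Sigma_2(L))$.

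In case (a), I would apply geometrization to conclude that $\Sigma_2(L')$ is $S^3$, a lens space, or $S^1\times S^2$. Fox's formula $\tau_\sharp = -\id$ on $H_1(\Sigma_2(L');\Z) = \pi_1(\Sigma_2(L'))$ (used in the proof of Proposition~\ref{prop:infinite}) then shows that the covering involution acts on this cyclic group by inversion, so $G^\orb(L')$ is the semidirect product $\pi_1(\Sigma_2(L')) \rtimes_{-1} \Z/2\Z$. This is $\Z/2\Z$ when $\pi_1(\Sigma_2(L'))$ is trivial, a finite dihedral group in the lens space case, and the infinite dihedral group when $\Sigma_2(L') = S^1\times S^2$ -- in every subcase one of the two ``trivial'' outcomes allowed by the definition of $\pi$-minimality.

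In case (b) I would use Hempel's theorem to deduce that $\pi_1(\Sigma_2(L))$ is residually finite, hence hopfian. Composing $\tilde\varphi$ with any isomorphism $\pi_1(\Sigma_2(L')) \xrightarrow{\sim} \pi_1(\Sigma_2(L))$ yields a surjective endomorphism of $\pi_1(\Sigma_2(L))$, which must therefore be an isomorphism; thus $\tilde\varphi$ is one too. Applying the snake lemma to the two $\Z/2\Z$-extensions gives the exact sequence $\ker(\tilde\varphi)\to\ker(\varphi)\to\ker(\bar\varphi)$, whose outer terms both vanish, forcing $\ker(\varphi)=1$ and hence $G^\orb(L')\cong G^\orb(L)$. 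Combining the two cases, every link $L'$ with $L\succeq L'$ has $G^\orb(L')$ isomorphic to $\Z/2\Z$, a dihedral group, or $G^\orb(L)$, which is exactly $\pi$-minimality. The only non-formal input is geometrization in case (a) and the observation that the $S^1\times S^2$ subcase still lands in a dihedral group; the remaining crux is the snake lemma step, which couples the Hopfian rigidity on the $\pi_1(\Sigma_2(L))$-layer with the identity on the $\Z/2\Z$-quotient to promote an isomorphism of normal subgroups to an isomorphism of the whole orbifold groups.
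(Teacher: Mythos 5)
Your argument is correct and follows the same core strategy as the paper's proof: pass to the induced surjection $\tilde\varphi\colon \pi_1(\Sigma_2(L))\twoheadrightarrow\pi_1(\Sigma_2(L'))$, split into the two cases permitted by the hypothesis, and in the isomorphic case use hopficity (Hempel) plus the five-lemma/snake-type diagram chase on the two $\Z/2\Z$-extensions to upgrade $\tilde\varphi$ to an isomorphism of $\pi$-orbifold groups. The only real divergence is the cyclic case: the paper argues by contraposition, assuming $L'$ has at least three bridges and excluding cyclic $\pi_1(\Sigma_2(L'))$ via the orbifold theorem (cyclic would force $L'$ to be the unknot or a $2$-bridge link), whereas you compute $G^\orb(L')$ directly as $\pi_1(\Sigma_2(L'))\rtimes_{-\id}\Z/2\Z$ using Fox's theorem and the order-two meridian splitting, concluding it is $\Z/2\Z$ or a (possibly infinite) dihedral group without identifying the link type of $L'$; this is slightly more elementary there (your appeal to geometrization to list the manifolds with cyclic fundamental group is actually superfluous), and it implicitly uses, as the paper does elsewhere, that ``dihedral'' in the definition of $\pi$-minimality includes the infinite dihedral group. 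Both routes rest on the same initial fact, from the opening of the proof of Theorem~\ref{thm:order}, that $\varphi$ carries $\pi_1(\Sigma_2(L))$ onto $\pi_1(\Sigma_2(L'))$, so no new gap is introduced.
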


\begin{proof}
Let $L \subset S^3$ be a link with at least three bridges that is not $\pi$-minimal. 
Then $L \succeq L'$, where $L'$ has at least three bridges and there is an epimorphism $\varphi\colon G^\orb(L) \twoheadrightarrow G^\orb(L')$ which is not an isomorphism. 
By Lemma~\ref{lem:epicover}, this epimorphism induces an epimorphism $\tilde{\varphi}\colon \pi_1(\Sigma_2(L)) \twoheadrightarrow \pi_1(\Sigma_2(L'))$. 
Since $L'$ has at least three bridges, $\Sigma_2(L')$ cannot be cyclic or trivial, otherwise $L'$ would be a $2$-bridge link or the unknot by the orbifold theorem~\cite[Theorem~1]{BoPo01}. 
Suppose, to the contrary, that $\pi_1(\Sigma_2(L'))$ is isomorphic to $\pi_1(\Sigma_2(L))$.
Then, $\tilde{\varphi}$ is an isomorphism because $3$-manifold groups are hopfian by \cite{Hem87}. 
Hence, the commutative diagram below (see also the proof of Lemma~\ref{lem:epicover}) shows that $\varphi$ itself is an isomorphism, which is not possible:
\[
\xymatrix{
1\ar[r] & \pi_1(\Sigma_2(L)) \ar[r] \ar@{->>}[d]^-{\tilde{\varphi}}& G^\orb(L) \ar[r] \ar@{->>}[d]^-{\varphi} & \Z/2\Z \ar[d]^-{\id} \ar[r] & 1 \\
1\ar[r] & \pi_1(\Sigma_2(L')) \ar[r] & G^\orb(L') \ar[r] & \Z/2\Z \ar[r] & 1.}
\]
Therefore, the group $\pi_1(\Sigma_2(L))$ surjects onto the fundamental group of a closed $3$-manifold which is neither trivial, cyclic, nor isomorphic to $\pi_1(\Sigma_2(L))$.
\end{proof}

As an application, we give examples of infinite families of $\pi$-minimal knots.

\begin{proposition}\label{prop:minimal_example}
The following hold.
\begin{enumerate}[label=\textup{(\arabic*)}]
\item A $(p, q)$-torus knot with $p$ and $q$ prime is $\pi$-minimal.
\item Let $K = K(\frac{\beta_1}{\alpha_1}, \frac{\beta_2}{\alpha_2}, \frac{\beta_3}{\alpha_3})$ be a non-elliptic Montesinos knot. If $2 \leq \alpha_1 < \alpha_2 < \alpha_3$ are prime numbers, then the Montesinos knot $K$ is $\pi$-minimal.
\end{enumerate}
\end{proposition}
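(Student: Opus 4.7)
The plan is to apply Lemma~\ref{lem:minimal2fold} in each case. For (1), if $p = 2$ then $K$ is a $(2, q)$-torus knot, which is $2$-bridge and hence $\pi$-minimal by definition, so we may assume $p, q$ are both odd. In both (1) and (2), $\Sigma_2(K)$ is Seifert fibered over $S^2$ with three exceptional fibers whose orders $(a_1, a_2, a_3)$ form a triple of distinct primes --- namely $(2, p, q)$ for (1) and $(\alpha_1, \alpha_2, \alpha_3)$ for (2). Write $\Gamma := \pi_1(\Sigma_2(K))$; it fits in a central extension $1 \to Z \to \Gamma \to T \to 1$ with $Z = \ang{h}$ the infinite cyclic center generated by the regular fiber and $T = T(a_1, a_2, a_3)$ the corresponding triangle group. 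The key input is that, since the $a_i$ are pairwise coprime, the Chinese remainder theorem gives $T^\ab = 0$, so $T$ is perfect.

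Let $\tilde\varphi \colon \Gamma \twoheadrightarrow \pi_1(M)$ be an epimorphism onto a closed orientable $3$-manifold group, and split according to whether $\tilde\varphi(h) = 1$. If $\tilde\varphi(h) = 1$, the map factors through $T$, so $\pi_1(M)$ is a perfect quotient of $T$ generated by three torsion elements of distinct prime orders whose product is $1$. The Kurosh subgroup theorem implies that such a configuration cannot generate a non-trivial free product, so $M$ is prime. If $\pi_1(M)$ is infinite, then either $M$ is aspherical or $M \cong S^2 \times S^1$; in both situations $\pi_1(M)$ is torsion-free, the images of the generators vanish, and surjectivity forces $\pi_1(M) = 1$, contradicting infinitude. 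In the finite case, Milnor's condition (every involution is central) together with the classification of finite orientable $3$-manifold groups shows that the only perfect non-trivial possibility is $\pi_1(M) = 2I$. Since each generator's image has order dividing the corresponding prime $a_i$, and $|2I| = 120 = 2^3 \cdot 3 \cdot 5$, the only primes that can occur nontrivially are $\{2, 3, 5\}$. In (1) this forces $(p, q) = (3, 5)$, so $\Gamma \cong 2I \cong \pi_1(M)$ (acceptable); in (2) it is excluded by the non-elliptic hypothesis.

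If $\tilde\varphi(h) \neq 1$, then $\pi_1(M)$ has a non-trivial central element; free products of non-trivial groups being centerless, $M$ is prime, and by Casson--Jungreis/Gabai it is Seifert fibered (the finite-center case reduces to the previous finite analysis). Quotienting by centers yields a surjection $T \twoheadrightarrow \pi_1^\orb(\mathcal B_M)$, where $\mathcal B_M$ is the base orbifold of $M = \Sigma_2(K')$. By Remark~\ref{rem:mod2homology}, $\mathcal B_M$ is orientable with planar underlying space, so $\mathcal B_M = S^2(\gamma_1, \dots, \gamma_n)$, and each $\gamma_j$ lies in $\{a_1, a_2, a_3\}$ by a torsion-order constraint. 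Triviality of $T^\ab$ passes to triviality of $\pi_1^\orb(\mathcal B_M)^\ab = (\bigoplus \Z/\gamma_j)/(1, \dots, 1)$; a direct computation using the pairwise coprimality of the $a_i$ shows this triviality holds only when $n \leq 2$ (in which case $\pi_1^\orb(\mathcal B_M)$ is trivial and $\pi_1(M)$ is cyclic, acceptable) or when $n = 3$ with $\{\gamma_1, \gamma_2, \gamma_3\} = \{a_1, a_2, a_3\}$ as multisets. In the latter case the Hopfian property of Fuchsian triangle groups makes the induced map $T \twoheadrightarrow T$ an isomorphism; comparing Seifert invariants via the determinant bound $\det K' \mid \det K$ (from $H_1(\Sigma_2(K'))$ being a quotient of $H_1(\Sigma_2(K))$) together with the rigidity of the Brieskorn sphere $\Sigma(2, p, q)$ in (1), respectively the Boileau--Zimmermann classification of Montesinos presentations in (2), yields $\pi_1(M) \cong \Gamma$.

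The main obstacle is the concluding step of Case II: lifting the isomorphism on the triangle-group quotients to an isomorphism on the central extensions requires matching local monodromies and the rational Euler number. For (1) this is essentially free, since a Seifert homology sphere with base $S^2(2, p, q)$ is $\Sigma(2, p, q)$ up to orientation. For (2) the determinant constraint together with the fact that the $\alpha_i$ are distinct primes must be exploited more carefully to force the Seifert invariants of $M$ to coincide with those of $\Sigma_2(K)$.
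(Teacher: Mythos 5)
Your overall strategy (reduce to Lemma~\ref{lem:minimal2fold}, analyze an epimorphism $\varphi\colon\pi_1(\Sigma_2(K))\twoheadrightarrow\pi_1(M)$ by whether the fiber class survives, and pass to triangle-group quotients) is the same as the paper's, and several of your ingredients are fine: perfectness of $T(a_1,a_2,a_3)$ for pairwise coprime cone orders, the elliptic-action argument excluding nontrivial free products, and the identification of $2I$ as the only perfect finite possibility. But there is a genuine gap at the decisive step of your Case~II. You assert that the base orbifold of $M$ is $S^2(\gamma_1,\dots,\gamma_n)$ with ``each $\gamma_j$ in $\{a_1,a_2,a_3\}$ by a torsion-order constraint.'' What torsion and abelianization considerations actually give is the reverse divisibility (each $a_i$ divides some $\gamma_j$, because each generator of $T$ must survive) together with pairwise coprimality of the $\gamma_j$ from perfectness; they do \emph{not} force the cone orders of the target into $\{a_1,a_2,a_3\}$, since a quotient of a triangle group can contain torsion of new orders and the target orbifold can a priori merge or enlarge cone points. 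Concretely, for $T(5,7,11)$ your constraints (perfect abelianization, presence of torsion of orders $5,7,11$, and even the Euler-characteristic inequality of \cite[Lemma~2.5]{Ron92}) do not exclude a target such as $\pi_1^{\orb}(S^2(2,11,35))$; ruling out exactly this kind of possibility is the heart of the paper's proof of Claim~\ref{claim:isomorphism}, which pins down the two-generator Fuchsian group generated by the images of two elliptic generators via Knapp's classification \cite[Theorem~2.3]{Kna68} (Cases (I)--(VII)), combined with \cite[Lemma~2.5]{Ron92} and the prime-divisibility argument. Without a substitute for that analysis your ``direct computation'' has nothing to compute from, so the case $n=3$ with $\{\gamma_j\}=\{a_i\}$ is not established.

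Two further points. First, your concluding step of Case~II is both incomplete (as you acknowledge) and unnecessary: once the induced map $\bar\varphi\colon T\to T'$ is an isomorphism and $\varphi$ is injective on the infinite cyclic center (it is, since $\varphi(h)\neq 1$ and the target is torsion-free there), a short diagram chase with the two central extensions gives injectivity of $\varphi$ directly, hence $\pi_1(M)\cong\pi_1(\Sigma_2(K))$; no matching of local monodromies or Euler numbers is needed, and your proposed determinant-plus-classification route would in any case not suffice for (2), since distinct Seifert manifolds over the same base can have first homology dividing $\det K$. Second, two smaller slips: in Lemma~\ref{lem:minimal2fold} the target $M$ is an \emph{arbitrary} closed orientable $3$-manifold, so writing $M=\Sigma_2(K')$ and invoking Remark~\ref{rem:mod2homology} is a category error --- the facts you need ($M$ a $\Z/2\Z$-homology sphere, orientable base with underlying space $S^2$) must instead be deduced from $H_1(M;\Z)$ being a quotient of the odd-order group $H_1(\Sigma_2(K);\Z)$ and from centrality of the fiber; and your blanket claim that $Z=\ang{h}$ is infinite cyclic fails for $K=T(3,5)$ in case (1), where $\pi_1(\Sigma_2(K))\cong 2I$ is finite, so that subcase needs the separate treatment the paper gives it.
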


\begin{proof}
Let $K$ be a $(p, q)$-torus knot with $2 \leq p < q$ and $p$, $q$ coprime. 
Let us assume that $p$ and $q$ are prime. 
If $p$ or $q$ is even, then $p = 2$ and the torus knot $T(2, q)$ is $\pi$-minimal. 
So, in case (1), we can assume that $p$ and $q$ are odd primes. 
Then it follows that in both cases (1) and (2) the $2$-fold branched cover $\Sigma_2(K)$ is Seifert fibered with orbifold base $S^2(\alpha_1, \alpha_2, \alpha_3)$, where $2 \leq \alpha_1 < \alpha_2 < \alpha_3$ are prime numbers. 
One has $(\alpha_1, \alpha_2, \alpha_3) = (2, p, q)$ in case (1). 
In case (2), since $K$ is assumed not to be an elliptic Montesinos knot, the hypotheses imply that $\frac{1}{\alpha_1} + \frac{1}{\alpha_2} + \frac{1}{\alpha_3} < 1$ unless $(\alpha_1, \alpha_2, \alpha_3) = (2, 3, 5)$ and $K$ is the $(3, 5)$-torus knot.
Then the proof of Proposition~\ref{prop:minimal_example} follows from Lemma~\ref{lem:minimal2fold} and the next claim.
\end{proof}

\begin{claim} \label{claim:isomorphism}
Let $K$ be a $(p,q)$-torus knot or a Montesinos knot as in Proposition~\ref{prop:minimal_example}.
If there is an epimorphism $\varphi \colon \pi_1(\Sigma_2(K)) \twoheadrightarrow \pi_1(M)$ where $M$ is a closed orientable $3$-manifold and $\pi_1(M)$ is not cyclic, then $\varphi$ is an isomorphism.
\end{claim}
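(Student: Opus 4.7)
Write $G = \pi_1(\Sigma_2(K))$. The Seifert structure on $\Sigma_2(K)$ gives a central extension
\[
1 \longrightarrow \langle h \rangle \longrightarrow G \longrightarrow \Delta \longrightarrow 1,
\]
where $h$ generates the center of $G$ (infinite cyclic in the non-elliptic cases, of order $2$ in the elliptic Poincar\'e case $K = T(3,5)$) and $\Delta = \Delta(\alpha_1,\alpha_2,\alpha_3) = \langle x_1,x_2,x_3 \mid x_i^{\alpha_i} = x_1x_2x_3 = 1\rangle$ is the von Dyck triangle group of the base orbifold. The distinct prime hypothesis makes $\alpha_1,\alpha_2,\alpha_3$ pairwise coprime, so a direct Smith normal form computation on the relation matrix gives $\Delta^{\ab} = 0$, and $\det K \neq 0$ makes $G^{\ab} = H_1(\Sigma_2(K);\Z)$ finite. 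The strategy is to first show that $M$ is prime, and then analyze separately the cases $\varphi(h) = 1$ and $\varphi(h) \neq 1$, arriving in each at $\pi_1(M) \cong G$, from which $\varphi$ is an isomorphism by the Hopf property \cite{Hem87}.

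For the primeness step, since $H_1(M;\Z)$ is finite, the prime decomposition has no $S^2 \times S^1$ summand. Suppose for contradiction $M = M_1 \sharp \cdots \sharp M_n$ with $n \geq 2$. Then $\pi_1(M)$ is a nontrivial free product, hence centerless, forcing $\varphi(h) = 1$ and $\varphi$ to factor through an epimorphism $\bar{\varphi}\colon\Delta \twoheadrightarrow \pi_1(M)$. Each $\bar{\varphi}(x_i)$ has order $1$ or the prime $\alpha_i$; the relation $\bar{\varphi}(x_1)\bar{\varphi}(x_2)\bar{\varphi}(x_3) = 1$ combined with the distinctness of the primes rules out the possibility that fewer than three $\bar{\varphi}(x_i)$ are nontrivial (else $\pi_1(M)$ would be cyclic). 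Thus each $\bar{\varphi}(x_i)$ is nontrivial torsion, hence conjugate into a factor $\pi_1(M_j)$ of the free product; a standard retraction argument shows any torsion-free factor would be generated by torsion and thus trivial, so every $M_j$ is a spherical space form. Then $\pi_1(M)^{\ab} = \bigoplus \pi_1(M_j)^{\ab}$ is a quotient of $\Delta^{\ab} = 0$, so each $\pi_1(M_j)$ is perfect; the only perfect finite spherical space form group is the binary icosahedral group $2I$, whose prime-order elements have orders in $\{2,3,5\}$. This forces $\{\alpha_1,\alpha_2,\alpha_3\} = \{2,3,5\}$, the elliptic case, where $G = 2I$ is itself finite and cannot surject onto the infinite group $2I \ast \cdots \ast 2I$. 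So $M$ is prime, and non-cyclicity of $\pi_1(M)$ excludes $M = S^3$ and $M = S^2 \times S^1$, making $M$ irreducible.

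If $\varphi(h) = 1$, then $\varphi$ factors through $\Delta$, and the same analysis forces all $\bar{\varphi}(x_i)$ to be nontrivial, so $\pi_1(M)$ has torsion; since $M$ is irreducible, $\pi_1(M)$ must be finite and, by perfectness, isomorphic to $2I$, landing in the elliptic case where $G = 2I = \pi_1(M)$ and $\varphi$ is an isomorphism. If $\varphi(h) \neq 1$ and $G$ is finite (elliptic case), the only non-cyclic quotients of $2I$ are $2I$ itself and $A_5 = 2I/\{\pm 1\}$; the latter requires $\varphi(h) = 1$ and is excluded, so $\pi_1(M) \cong 2I$. Otherwise $G$ is infinite, and $\pi_1(M)$ is an infinite 3-manifold group with nontrivial center. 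Since $M$ is irreducible aspherical, $\pi_1(M)$ is torsion-free, so $\varphi(h)$ has infinite order and generates a $\Z$-central subgroup; by the Seifert Fibered Space Theorem of Casson--Jungreis \cite{CaJu94} and Gabai \cite{Gab92}, $M$ is Seifert fibered. Letting $\langle h'\rangle$ denote the infinite cyclic center of $\pi_1(M)$, $\varphi$ descends to an epimorphism $\bar{\varphi}\colon \Delta \twoheadrightarrow \pi_1^{\orb}(\mathcal{B}')$ onto the base orbifold group, whose underlying surface is $S^2$ by the $\Z/2\Z$-homology obstruction of Remark~\ref{rem:mod2homology}.

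The principal obstacle is this last case: one must argue that a surjection $\Delta(\alpha_1,\alpha_2,\alpha_3) \twoheadrightarrow \pi_1^{\orb}(\mathcal{B}')$ with distinct prime orders $\alpha_i$, onto the orbifold group of an $S^2$-based Seifert base of a rational homology sphere, is forced to be an isomorphism. The leverage is that each $\bar{\varphi}(x_i)$ has order $1$ or $\alpha_i$ and that $\pi_1^{\orb}(\mathcal{B}')^{\ab}$ is a quotient of $\Delta^{\ab} = 0$; these constraints force $\mathcal{B}'$ to be a triangle orbifold with the same cone orders $\alpha_i$ and $\bar{\varphi}$ to be an isomorphism of triangle groups (since killing any $x_i$ would either give a cyclic image or, via the product relation together with the distinct prime orders, collapse a second generator as well). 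Once the bases are identified, comparing rational Euler numbers through the divisibility $|H_1(M;\Z)|$ divides $|H_1(\Sigma_2(K);\Z)|$ pins down the Seifert invariants of $M$ up to the finite ambiguity captured by $G^{\ab}$, yielding $\pi_1(M) \cong G$; hence $\varphi$ is an isomorphism by Hopficity.
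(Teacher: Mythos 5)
Your primeness step is fine and is in fact a genuinely different (and rather clean) route from the paper's: you rule out a free--product target via torsion in the factors, the vanishing abelianization of $\Delta(\alpha_1,\alpha_2,\alpha_3)$ for pairwise coprime orders, and the fact that the only perfect spherical space form group is the binary icosahedral group, whereas the paper shows $\pi_1(\Sigma_2(K))$ admits no nontrivial action on the Bass--Serre tree (Culler--Shalen plus Waldhausen's classification of incompressible surfaces in Seifert manifolds). After that, however, there are genuine gaps. First, in the branch ``$\varphi(h)\neq 1$ and $G$ infinite'' you simply assert that $\pi_1(M)$ is infinite. Nothing you have proved excludes a finite non-cyclic quotient (a spherical space form group) with $\varphi(h)\neq 1$; your perfectness trick is unavailable there because $\varphi$ does not factor through $\Delta$, and $G^{\ab}=H_1(\Sigma_2(K);\Z)$ is in general a nontrivial odd group in the Montesinos case. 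The paper devotes a separate case to exactly this: geometrization makes $M$ an elliptic Seifert manifold whose base must be $S^2(2,3,3)$ or $S^2(2,3,5)$ (the $\Z/2\Z$-homology sphere condition killing the other elliptic bases), the centerless base group lets one pass to an epimorphism of triangle groups, and the distinct-prime-orders argument then forces $\{\alpha_1,\alpha_2,\alpha_3\}=\{2,3,5\}$, contradicting $G$ infinite. (Relatedly, in your $\varphi(h)=1$ subcase the correct conclusion is that the subcase is vacuous; asserting there that ``$\varphi$ is an isomorphism'' is incompatible with $\varphi(h)=1$, $h\neq 1$.)

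The more serious gap is the step you yourself call the principal obstacle: it is asserted, not proved. The constraints you list (each $\bar{\varphi}(x_i)$ has order $1$ or $\alpha_i$, and the target is perfect) do not force $\mathcal{B}'$ to be a triangle orbifold with the same cone orders, nor $\bar{\varphi}$ to be injective. For instance $\pi_1^{\orb}(S^2(2,3,5,7))$ is perfect (pairwise coprime cone orders give trivial abelianization), so perfectness alone does not bound the number of cone points; the paper needs the rank theorem \cite[Theorem~1.1(ii)]{BoZi84} (two-generation forces three exceptional fibers) to get a triangle base. And even once both sides are hyperbolic triangle groups, a surjection $T(\alpha_1,\alpha_2,\alpha_3)\twoheadrightarrow T(\alpha'_1,\alpha'_2,\alpha'_3)$ is not an isomorphism ``by hopficity'' unless one already knows the triples coincide; establishing that is exactly where the paper invests its work, via Knapp's classification of two-generator Fuchsian groups, Rong's inequality $-\chi(S^2(\alpha_1,\alpha_2,\alpha_3))\geq-\chi(S^2(\alpha'_1,\alpha'_2,\alpha'_3))$ for epimorphisms, and the fact that torsion in a hyperbolic triangle group is conjugate into the cone cyclic subgroups. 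Your parenthetical only re-proves that all three generators survive, which is not the issue. Finally, your closing step (``comparing rational Euler numbers through the divisibility of $|H_1|$ pins down the Seifert invariants, so $\pi_1(M)\cong G$, then apply Hopficity'') does not work: divisibility of $|H_1(M;\Z)|$ in $|H_1(\Sigma_2(K);\Z)|$ is compatible with many Euler numbers and hence many non-isomorphic targets. The correct finish, and the paper's, is direct: once $\varphi$ is injective on the center $\langle h\rangle$ and $\bar{\varphi}$ is injective on the base groups, any element of $\ker\varphi$ lies in $\langle h\rangle$ and is therefore trivial, so the surjection $\varphi$ is an isomorphism.
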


\begin{proof} 
If the manifold $M$ is not prime, its fundamental group admits a non-trivial decomposition as a free product. 
Then the group $\pi_1(M)$ acts non-trivially, without edge inversions, on the Bass-Serre tree $\mathcal{T}$ associated to this algebraic decomposition.
The epimorphism $\varphi \colon \pi_1(\Sigma_2(K)) \twoheadrightarrow \pi_1(M)$ induces a non-trivial action, without edge inversions, of the group $\pi_1(\Sigma_2(K))$ on the Bass-Serre tree $\mathcal{T}$.
It follows from \cite{CuSh83} that the manifold $\Sigma_2(K)$ splits along some closed orientable incompressible surface. 
By Waldhausen~\cite{Wal67}, a closed incompressible surface in $\Sigma_2(K)$ is either a vertical torus which is a union of fibers or a horizontal surface transverse to the fibers of the Seifert fibration of $\Sigma_2(K)$.
In the first case, the projection of the incompressible vertical torus on the base $S^2(\alpha_1, \alpha_2, \alpha_3)$ would be an essential simple closed curve, which does not exist on such an orbifold. 
In the second case, since the base of the Seifert fibration is orientable, the horizontal surface would be non-separating in the rational homology sphere $\Sigma_2(K)$, which is impossible.
Therefore, $\Sigma_2(K)$ cannot split along some closed incompressible surface, and $M$ must be prime. 
The only prime manifold which is not irreducible is $S^1 \times S^2$. 
Since $M$ is a $\Z/2\Z$-homology sphere because $\varphi$ induces an epimorphism from the finite group of odd order $H_1(\Sigma_2(K); \Z)$ onto $H_1(M; \Z)$, $M$ must be irreducible.

If $\pi_1(M)$ is finite, by the proof of the geometrization conjecture (see \cite{KlLo08}, \cite{MoTi07}), $M$ carries a Riemannian metric with constant sectional curvature and hence  is either a lens space or an elliptic Seifert fibered $3$-manifold with three exceptional fibers and base $S^2(\alpha'_1, \alpha'_2, \alpha'_3)$ with $\frac{1}{\alpha'_1} + \frac{1}{\alpha'_2} + \frac{1}{\alpha'_3} > 1$. 
Then the only possibilities for the triple $\{\alpha'_1, \alpha'_2, \alpha'_3\}$ are $\{2, 3, 3\}, \{2, 3, 5\}$ since at most one of the $\alpha'_i$ can be even because $M$ is a $\Z/2\Z$-homology sphere.

If $\pi_1(M)$ is infinite, then $M$ is aspherical and $\pi_1(M)$ is torsion-free. 
Moreover, $\pi_1(\Sigma_2(K))$ is infinite and contains a center $Z$ which is infinite cyclic generated by a regular fiber of the Seifert fibration of $\Sigma_2(K)$. 
Since the quotient $\pi_1(\Sigma_2(K))/Z = \pi_{1}^\orb(S^2(\alpha_1, \alpha_2, \alpha_3))$ is generated by torsion elements, the image $\varphi(Z)$ cannot be trivial, and thus $\pi_1(M)$ has a non-trivial center. 
By \cite{Gab92} and \cite{CaJu94}, $M$ is a Seifert $3$-manifold and its base is orientable because the subgroup generated by the fiber is central in $\pi_1(M)$. 
Since $\pi_1(M)$ is not cyclic, it is generated by exactly two elements because $\pi_1(\Sigma_2(K))$ is generated by two elements. 
Hence the Seifert fibered $\Z/2\Z$-homology sphere $M$ admits three exceptional fibers by \cite[Theorem~1.1(ii)]{BoZi84} and a base with underlying space $S^2$ and three singular points with branching indices $\{\alpha'_1, \alpha'_2, \alpha'_3\}$ with $\frac{1}{\alpha'_1} + \frac{1}{\alpha'_2} + \frac{1}{\alpha'_3} \leq 1$.

In any case, $M$ is a Seifert fibered $3$-manifold with three exceptional fibers and base $S^2(\alpha'_1, \alpha'_2, \alpha'_3)$ with at most one of the $\alpha'_i$ even. 
Since the triangle group $T(\alpha'_1, \alpha'_2, \alpha'_3) = \pi_1^\orb(S^2(\alpha'_1, \alpha'_2, \alpha'_3))$ has no center, the epimorphism $\varphi \colon \pi_1(\Sigma_2(K)) \twoheadrightarrow \pi_1(M)$ induces an epimorphism 
\[
\bar{\varphi}\colon \pi_1^\orb(S^2(\alpha_1, \alpha_2, \alpha_3)) = T(\alpha_1, \alpha_2, \alpha_3) \twoheadrightarrow  \pi_1^\orb(S^2(\alpha'_1, \alpha'_2, \alpha'_3)) = T(\alpha'_1, \alpha'_2, \alpha'_3)
\]
between the fundamental groups of the bases.
The presentation of the triangle group 
\[
T(\alpha_1, \alpha_2, \alpha_3)=  \langle x, y, z \mid x^{\alpha_1} = y^{\alpha_2} = z^{\alpha_3} =xyz = 1 \rangle,
\]
with $2 \leq \alpha_1 < \alpha_2 < \alpha_3$ prime numbers shows that each image $\bar{\varphi}(x)$, $\bar{\varphi}(y)$ and $\bar{\varphi}(z)$ is not trivial, otherwise the image $\bar{\varphi}( T(\alpha_1, \alpha_2, \alpha_3))$ would be trivial. 
Hence, $\bar{\varphi}( T(\alpha_1, \alpha_2, \alpha_3)) = T(\alpha'_1, \alpha'_2, \alpha'_3)$ contains elements of distinct prime orders $\alpha_1, \alpha_2, \alpha_3$. 
We distinguish two cases according to whether $\frac{1}{\alpha'_1} + \frac{1}{\alpha'_2} + \frac{1}{\alpha'_3} \geq 1$ or not.

If $\frac{1}{\alpha'_1} + \frac{1}{\alpha'_2} + \frac{1}{\alpha'_3} \geq 1$, then $(\alpha'_1, \alpha'_2, \alpha'_3) \in \{(2, 3, 3), (2, 3, 5), (3, 3, 3)\}$ since at most one of the $\alpha'_i$ is even.
The triple $\{2, 3, 3\}$ is not possible since the order $12$ of $T(2, 3, 3)$ is not divisible by three distinct prime numbers $\alpha_1, \alpha_2, \alpha_3$. 
The triple $\{3, 3, 3\}$ is also not possible since the torsion elements in the Euclidean triangle group $T(3, 3, 3)$ have all the same order $3$. 
If $\{\alpha'_1, \alpha'_2, \alpha'_3\} = \{2, 3, 5\}$, then the only prime divisors of the order $60$ of $T(2, 3, 5)$ are $2, 3$ and $5$. 
Therefore, $\{\alpha_1, \alpha_2, \alpha_3\} = \{2, 3, 5\}$ and $K$ is an elliptic Montesinos knot, which is not the case in (2). 
In the case (1), $K$ is the $(3, 5)$-torus knot  and $\Sigma_2(K)$ is the Poincar\'e homology sphere whose fundamental group is the binary icosahedral group $I^{\ast}$ of order $120$. 
Then $M$ is a Seifert fibered integral homology sphere with finite fundamental group, and hence it is the Poincar\'e homology sphere and $\pi_1(M)$ is isomorphic to $\pi_1(\Sigma_2(K))$. 
In particular, the epimorphism $\varphi$ is injective because the groups have the same order.

If $\frac{1}{\alpha'_1} + \frac{1}{\alpha'_2} + \frac{1}{\alpha'_3} < 1$, then $T(\alpha'_1, \alpha'_2, \alpha'_3) \subset \PSL(2,\mathbb{R})$ is a hyperbolic triangle group.
The two elliptic elements $a = \bar{\varphi}(y)$ and $b = \bar{\varphi}(z)$ of prime orders $\alpha_2$ and $\alpha_3$ generate the discrete non-abelian group $T(\alpha'_1, \alpha'_2, \alpha'_3) \subset \PSL(2, \R)$. 
Then up to taking suitable powers $u=a^{k}$ and $v = b^{\ell}$ to normalize the matrix representatives of $u$ and $v$ in $SL(2,\R)$, at least one of Cases~(I)--(VII) in \cite[Theorem~2.3]{Kna68} holds. 
Since the triangle group $T(\alpha'_1, \alpha'_2, \alpha'_3)$ is co-compact, Case~(II) is impossible by \cite[Figures~2 and 3]{Kna68}.
Cases~(III) and (VI) do not hold because the generators $u$ and $v$ do not have the same order since $\alpha_2 < \alpha_3$.
Case~(IV) is not possible because neither of the generators $u$ nor $v$ has order $2$, since $2 \leq \alpha_1 < \alpha_2 < \alpha_3$.
If Cases~(V) or (VII) happen, then the order of $u$ is $3$ and the order of $v$ is $n \geq 7$ and not divisible by $3$. 
Hence $\alpha_2= 3$ and $\alpha_3= n$. 
It follows that $\alpha_1 = 2$ and so $\{\alpha_1, \alpha_2, \alpha_3\} = \{2, 3, n\}$.
Moreover, by \cite[Section 3, Cases~V and VII]{Kna68}, we have 
\[
\bar{\varphi}( T(\alpha_1, \alpha_2, \alpha_3)) = T(\alpha'_1, \alpha'_2, \alpha'_3) = T(2, 3, n) = T(\alpha_1, \alpha_2, \alpha_3).
\]
By the hopfian property of triangle groups, the epimorphism $\bar{\varphi}$ is an isomorphism. 
Therefore, the epimorphism $\varphi \colon \pi_1(\Sigma_2(K)) \twoheadrightarrow \pi_1(M)$ induces injective homomorphisms both on the center of the Seifert fibered manifolds and on the fundamental groups of their bases. 
Hence it induces an injective homomorphism on $\pi_1(\Sigma_2(K))$. 
Therefore, the epimorphism $\varphi$ is an isomorphism.

We are left with Case~(I) where $uv$ has an extreme negative trace. 
By \cite[Proposition~2.2]{Kna68}, $u$ and $v$ generate a hyperbolic triangle group $T(\alpha_2, \alpha_3, n)$. 
Therefore, $T(\alpha'_1, \alpha'_2, \alpha'_3) = T(\alpha_2, \alpha_3, n)$. 
The epimorphism $\bar{\varphi} \colon  T(\alpha_1, \alpha_2, \alpha_3)\twoheadrightarrow T(\alpha_2, \alpha_3, n)$ implies that $-\chi(S^2(\alpha_1, \alpha_2, \alpha_3)) \geq -\chi(S^2(\alpha_2, \alpha_3, n))$ by \cite[Lemma~2.5]{Ron92}. 
So, $1 - (\frac{1}{\alpha_1} + \frac{1}{\alpha_2} + \frac{1}{\alpha_3}) \geq 1 - (\frac{1}{\alpha_2} + \frac{1}{\alpha_3} + \frac{1}{n})$ and hence $n \leq \alpha_1$.
On the other hand, $\bar{\varphi}(x)$ is an element of prime order $\alpha_1$ in $T(\alpha_2, \alpha_3, n)$. 
The only torsion elements in a hyperbolic triangle group are conjugate in a maximal finite cyclic subgroup generated by one of the rotation generators. 
Therefore, $\alpha_1$ must divide one of the numbers $\{\alpha_2, \alpha_3, n\}$. 
Since $\alpha_2$ and $\alpha_3$ are prime numbers distinct from $\alpha_1$, the only possibility is that $\alpha_1$ divides $n$. 
Then $\alpha_1 = n$ because $n \leq \alpha_1$.
Therefore, up to permutation, $\{\alpha'_1, \alpha'_2, \alpha'_3\} = \{\alpha_1, \alpha_2,\alpha_3\}$ and $\bar{\varphi} \colon  T(\alpha_1, \alpha_2, \alpha_3) \twoheadrightarrow  T(\alpha'_1, \alpha'_2, \alpha'_3)$ is an isomorphism by the hopfian property of triangle groups.
The same argument as in Cases~(V) and (VII) shows that $\Sigma_2(K)$ and $M$ have isomorphic fundamental groups.
\end{proof}

\section{Applications to symmetric unions of knots}
\label{sec:symmetric_uinon}

In this section, we use $\pi$-orbifold groups and the previous results to study a classical construction in knot theory introduced by Kinoshita and Terasaka in \cite{KiTe57} and generalized by Lamm~\cite{Lam00}.
This construction is a generalization of the connected sum of a knot with its mirror image, which gives plenty of examples of ribbon knots.

\subsection{Symmetric unions of knots}

\begin{definition}
\label{def:symmetric_union}
Let $D$ be an unoriented planar diagram of a knot $K_D$ and let $D^*$ be the diagram obtained from $D$ by reflecting $D$ across an axis $\Delta$ in the plane.
Let $B_0, B_1,\dots, B_k$ be balls along the axis $\Delta$, each of which is invariant by the reflection and intersects $D$ in a trivial arc.
One replaces the trivial tangle $(B_0, B_0 \cap (D \cup D^*))$ by an $\infty$-tangle to get the connected sum of the diagrams $D$ and $D^*$.
For $1\leq i \leq k$, one replaces each trivial tangle $(B_i, B_i \cap (D\sharp D^*))$ by a $n_i$-tangle, where $n_i \in \Z$.
The knot diagram $(D \cup D^*)(\infty, n_1, \dots, n_k)$ obtained from $D\cup D^*$ in this way is called a \emph{symmetric union} of the diagram $D$ and $D^*$.
A knot which admits a diagram $(D \cup D^*)(\infty, n_1, \dots, n_k)$ is said to admit a \emph{symmetric union presentation} with \emph{partial knot} $K_D$, where $K_D$ corresponds to the closure of the diagram $D$ such that $(D \cup D^*)(0, 0, \dots, 0) = K_D \cup K_{D}^*$.
See Figure~\ref{fig:symmetric_union}.
\end{definition}

\begin{figure}[h]
 \centering \includegraphics[width=\textwidth]{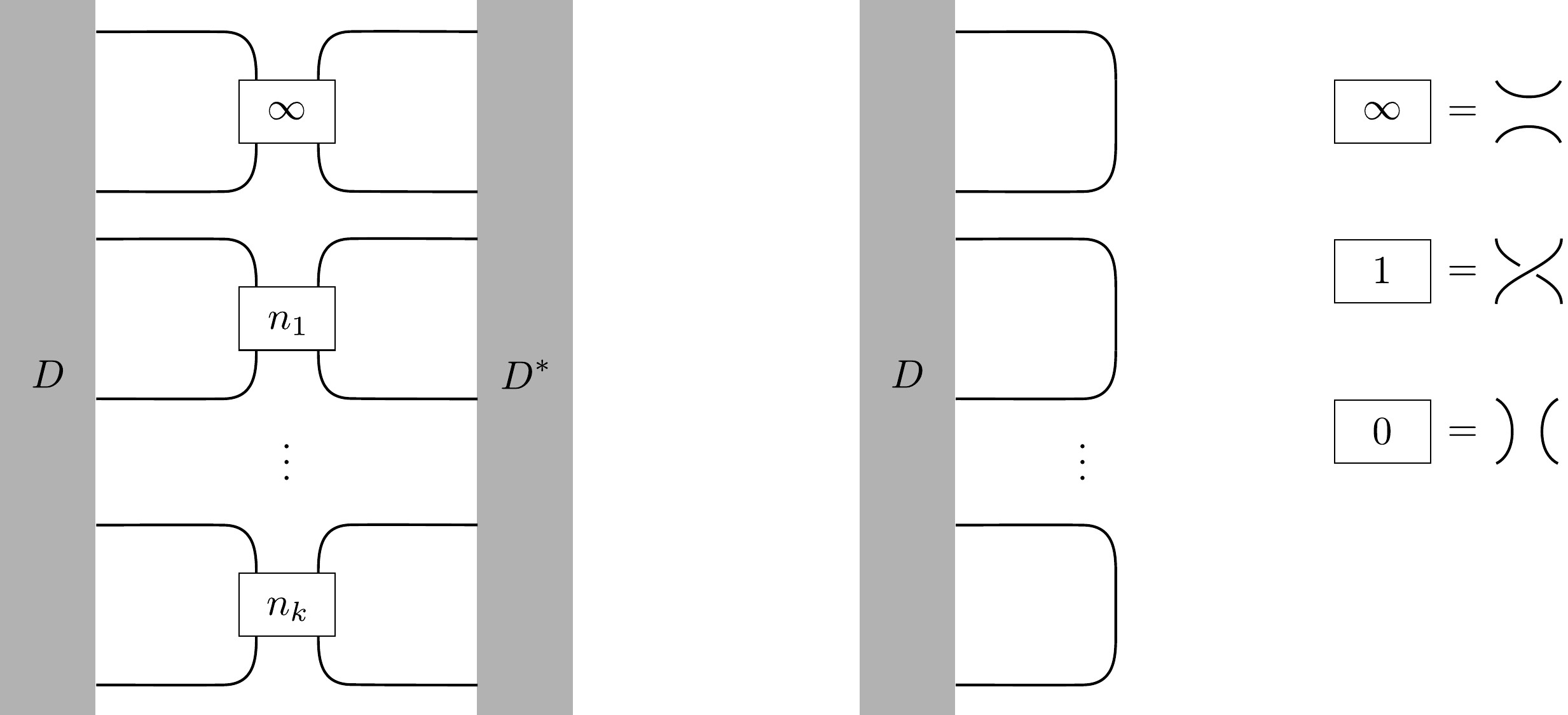}
 \caption{Symmetric union $(D \cup D^*)(\infty, n_1, \dots, n_k)$ and its partial knot $K_D$.}
 \label{fig:symmetric_union}
\end{figure}

The symmetric union construction is not unique and the isotopy type of $K = (D \cup D^*)(\infty, n_1, \dots, n_k)$ depends on both the diagram $D$ and the location of the tangle replacements.
When there is a single tangle replacement, the construction is due to Kinoshita and Terasaka~\cite{KiTe57}.
The extension to multiple symmetric tangle replacements is due to Lamm~\cite{Lam00}.
When the partial knot $K_D$  is oriented and all the twist numbers $n_i$ ($1\leq i \leq k$) are even, the symmetric union $D \cup (-D^\ast)(\infty, n_1, \dots, n_k)$ inherits an orientation from the connected sum $D \sharp (-D^*)$, but when some twists $n_i$ are odd, the orientation of $D \cup D^* (\infty, n_1, \dots, n_k)$ is not well-defined.
A symmetric union $D \cup D^* (\infty, n_1, \dots, n_k)$ is said to be \emph{even} if all the $n_i$ are even.
Otherwise, the symmetric union is said to be \emph{skew}.

The symmetric union construction always produces ribbon knots, and it is still an open question whether there is a ribbon knot that does not admit a symmetric union presentation.
One difficulty in finding such an example is that there is no obstruction known for a ribbon knot to be a symmetric union. 
However, there are candidates like the Montesinos knot $11a_{201} = K(\frac{1}{3}, \frac{2}{3}, \frac{4}{5})$. 
In \cite[Proposition~1.10]{BKN25}, it has been shown that if the knot $11a_{201}$ admits a symmetric union presentation, it has to be skew and the only possible partial knots are $6_1$ and $9_1$. 
At the moment, it has not been possible to rule out these two possibilities of partial knots.
The goal of this section is to pursue the study of the symmetric union construction by investigating more closely the relationship between the knot types of a symmetric union and of its partial knots. 

The following is the key result which relates the $\pi$-orbifold group of a knot and the symmetric union construction. 
It is due to M.~Eisermann (see \cite[Theorem~3.3]{Lam00}).

\begin{proposition}\label{prop:orbifold}
Let $K$ be a symmetric union with partial knot $K_D$.
There is an epimorphism $\varphi_D^\orb\colon G^\orb(K) \twoheadrightarrow G^\orb(K_D)$ which sends the image of a meridian of $K$ to the image of a meridian of $K_D$ and which kills the image of the preferred longitude of $K$.

If $K$ is an even symmetric union, then the epimorphism $\varphi_D$ lifts to an epimorphism $\phi_D\colon G(K) \twoheadrightarrow G(K_D)$ which sends a meridian of $K$ to a meridian of $K_D$ and which kills the preferred longitude of $K$.
\end{proposition}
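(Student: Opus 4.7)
The plan is to define $\varphi_D^\orb$ on Wirtinger generators. Let $x_1, \ldots, x_p$ be the Wirtinger generators of $G(K_D)$ coming from the arcs of $D$, so that $G^\orb(K_D)$ has the same presentation augmented with the order-two relations $x_i^2 = 1$. The mirror diagram $D^*$ supplies paired generators $x_1^*, \ldots, x_p^*$, and a preliminary observation is that the Wirtinger relations of $D^*$ are the inverses of those of $D$; in the presence of the order-two relations they become equivalent. This suggests setting $\varphi_D^\orb(x_i) = x_i$ and $\varphi_D^\orb(x_i^*) = x_i$, and extending to the generators coming from arcs inside the tangles $B_0, B_1, \ldots, B_k$ through the internal Wirtinger relations of $K$.

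Well-definedness then reduces to checking the Wirtinger relations of $K$ tangle by tangle. Relations from arcs of $D$ and of $D^*$ are handled by the observation above. A crossing inside a twist tangle at $B_i$ (for $i \geq 1$) involves one strand from $D$ and one from $D^*$, both mapping to the same element $x \in G^\orb(K_D)$, so the Wirtinger conjugation $b \mapsto a b a^{-1}$ collapses to $x \mapsto x \cdot x \cdot x = x$. Every arc inside the tangle therefore carries the same image, and this is consistent with the external arcs into which the outgoing strands flow, regardless of the parity of $n_i$. The $\infty$-tangle at $B_0$ introduces no crossings; it only identifies an arc of $D$ with an arc of $D^*$, an identification already absorbed by the assignment. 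Surjectivity and the meridian condition are then immediate.

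The preferred-longitude condition is more delicate. I would write the longitude as a Wirtinger word and observe that the $D$-portion and the $D^*$-portion of the traversal contribute words that cancel in $G^\orb(K_D)$: the natural orientation on $K$ traverses $D^*$ in the direction opposite to that of $D$, so under $x_i^* \mapsto x_i$ the $D^*$ contribution becomes the inverse of the $D$ contribution modulo the order-two relations, while the portions inside tangles vanish by the collapse above. I expect the main technical obstacle to be the sign and writhe bookkeeping here, verifying that the writhe correction converting the blackboard longitude into the preferred longitude is absorbed by the cancellation together with $x_i^2 = 1$. For the lift $\phi_D \colon G(K) \twoheadrightarrow G(K_D)$ in the even case I would instead set $\phi_D(x_i^*) = x_i^{-1}$; this matches the Wirtinger relations of $D^*$ to those of $D$ without invoking $x_i^2 = 1$, and a direct Wirtinger computation inside each $B_i$ shows that when $n_i$ is even the strand permutation is trivial and each outgoing generator is correctly identified with the external arc. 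The odd-twist case would force $x = x^{-1}$, which is exactly the obstruction resolved only after passing to $G^\orb$; this is what makes the evenness hypothesis necessary for the lift.
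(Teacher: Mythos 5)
The paper does not actually prove this proposition: it is quoted from Eisermann via \cite[Theorem~3.3]{Lam00}, so your direct Wirtinger-theoretic route is necessarily independent of the text. The first half of your construction is essentially correct: folding arcs of $D^*$ onto arcs of $D$, the mirrored Wirtinger relations become equivalent to the original ones once $\bar{y}^2=1$, every arc meeting a twist region $B_i$ ($i\geq 1$) is forced to the single involution $\bar{a}_i$ (so those relations are vacuous, for any $n_i$), the $\infty$-tangle at $B_0$ is harmless because $x$ and $x^*$ have the same image, and surjectivity and the meridian condition follow. This gives the epimorphism $G^\orb(K)\twoheadrightarrow G^\orb(K_D)$ sending meridian to meridian.

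The genuine gap is the longitude, which is precisely the part of the statement the paper later uses (Proposition~\ref{prop:notequivalent} via Lemma~\ref{lem:longitude}). Your claim that ``the $D$-portion and the $D^*$-portion of the traversal contribute words that cancel'' does not work as stated: $G^\orb(K_D)$ is nonabelian and the traversal of $K$ switches sides at both strands of $B_0$ and at every odd twist region, so the image of the longitude is an interleaved word $u_1v_1u_2v_2\cdots$ rather than a block word $UV$ with $V=U^{-1}$; a global cancellation of ``the $D$-contribution against the $D^*$-contribution'' makes no sense without a telescoping or inductive argument organized around the passages through the axis. Moreover, ``the portions inside tangles vanish by the collapse above'' is false: the collapse trivializes the Wirtinger \emph{relations} in $B_i$, but each of the $|n_i|$ under-passages there contributes the nontrivial involution $\bar{a}_i$ to the longitude word, and the writhe correction $\bar{x}_0^{\,w}$ (with $w\equiv\sum_i n_i \bmod 2$) must also be absorbed. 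You flag this yourself as the ``main technical obstacle'' and leave it unverified, so the statement is not proved; to close it you would need either genuine local bookkeeping along the axis or a conceptual argument (e.g.\ factoring the folding map through the fundamental group of the exterior of the symmetric ribbon disk in $B^4$, where the $0$-framed longitude bounds a push-off of the disk). Finally, in the even case the recipe $\phi_D(x^*)=x^{-1}$ is never checked at $B_0$: there a single Wirtinger arc of the diagram of $K$ has one portion in $D$ and one in $D^*$, so your two rules assign $a_0$ and $a_0^{-1}$ to the same generator; this is consistent only after fixing orientation conventions so that the identification at $B_0$ reads $a_0=(a_0^*)^{-1}$ (the orientation inherited from $K$ is reversed on the $D^*$ half), a point absent from your argument, and the longitude claim for the lift is likewise untouched.
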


Using Lemma~\ref{lem:diagram}, Proposition~\ref{prop:orbifold} can be restated as: if $K_D$ is a partial knot for a symmetric union presentation of $K$, then $K \succeq K_D$. 
Then the following result is a corollary of Theorem~\ref{thm:order}. 
It puts some strong restriction on the partial knot types of the symmetric union presentations for some particular families of knots.

\begin{corollary}
\label{cor:partial_knot}
Let $K$ be a knot having a symmetric union presentation with partial knot $K_D$.
Then the following hold.
\begin{enumerate}[label=\textup{(\arabic*)}]
\item If $K$ is trivial, $K_D$ is trivial.
\item If $K$ is a $2$-bridge knot, $K_D$ is a $2$-bridge knot and the symmetric union is skew.
\item If $K$ is a Montesinos knots with $r$ rational tangles \textup{(}$r \geq 3$\textup{)}, $K_D$ is the unknot, or a $2$-bridge knot, or a Montesinos knot with $r'$ rational tangles \textup{(}$r'\leq r$\textup{)}, or a connected sum of $n_1$ $2$-bridge knots and $n_2$ elliptic Montesinos knots with $n_1 + 2n_2 \leq r-1$.
\end{enumerate}
\end{corollary}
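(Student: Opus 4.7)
The plan is to combine Proposition~\ref{prop:orbifold} with Theorem~\ref{thm:order}. By Proposition~\ref{prop:orbifold}, any symmetric union presentation of $K$ with partial knot $K_D$ yields a $\pi$-domination $K\succeq K_D$, so the three cases of the corollary amount to substituting $L=K$ and $L'=K_D$ into the corresponding parts of Theorem~\ref{thm:order} and then exploiting the fact that both $K$ and $K_D$ are knots rather than arbitrary links.

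For (1), when $K$ is the unknot, Theorem~\ref{thm:order}(1) immediately forces $K_D$ to be the unknot. For (3), when $K$ is a Montesinos knot with $r\geq 3$ rational tangles, Theorem~\ref{thm:order}(3) produces four a priori possibilities for $K_D$: the unknot, a $2$-bridge link, a Montesinos link with $r'\leq r+1$ rational tangles, or a connected sum of $n_1$ $2$-bridge links and $n_2$ elliptic Montesinos links with $n_1+2n_2\leq r-1$. Since $K_D$ is a knot, the link cases collapse to their knot counterparts, and the upper bound $r'\leq r+1$ sharpens to $r'\leq r$ via Remark~\ref{rem:rcomp}: the extremal value $r+1$ would force $K_D$, and hence $K$, to have $r\geq 3$ components, contradicting that $K$ is a knot. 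In the connected-sum case, each prime factor is itself a knot because a connected sum of knots is a knot, so the factors are $2$-bridge knots or elliptic Montesinos knots and the inequality $n_1+2n_2\leq r-1$ is inherited.

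For (2), the first half---that $K_D$ is a $2$-bridge knot (including the unknot as a degenerate $2$-bridge)---follows immediately from Theorem~\ref{thm:order}(2) together with $K_D$ being a knot. The subtler content is the skewness of the presentation, and this is the main obstacle of the argument. I would argue by contradiction: assume the symmetric union is even. Then the second part of Proposition~\ref{prop:orbifold} upgrades the $\pi$-orbifold epimorphism to an epimorphism $\phi_D\colon G(K)\twoheadrightarrow G(K_D)$ between the full knot groups that sends a meridian of $K$ to a meridian of $K_D$ and kills the preferred longitude $\lambda_K$. Consequently, $\phi_D$ factors through $\pi_1(S^3_0(K))$, the fundamental group of the $0$-surgery on $K$. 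For a $2$-bridge knot $K$ this is a very strong constraint; the goal is to conclude that $K_D$ must be trivial, so that the even symmetric union degenerates to $K_D\sharp K_D^{*}=U$ and $K$ itself is the unknot, contradicting the hypothesis that $K$ is a (non-trivial) $2$-bridge knot. The core rigidity step can be approached either through the classification of meridian-preserving epimorphisms between $2$-bridge knot groups in the spirit of Ohtsuki--Riley--Sakuma, supplemented by the extra longitude-killing condition, or more concretely via the numerical identity $\det K=(\det K_D)^{2}$ available for even symmetric unions, combined with the lens-space structure of $\Sigma_2(K)$ when $K$ is $2$-bridge and the hopfian property of dihedral quotients.
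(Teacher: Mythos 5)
Your treatment of (1) and (3) is fine and coincides with the paper's: Proposition~\ref{prop:orbifold} gives $K\succeq K_D$, Theorem~\ref{thm:order} applies, and Remark~\ref{rem:rcomp} rules out $r'=r+1$ because that would force $K$ to have $r\geq 3$ components. The genuine gap is in (2), and it is exactly the point you flag as ``the main obstacle'': the skewness is never actually proved. The paper's argument is short but relies on two specific inputs you do not supply. First, $\det K=(\det K_D)^2$ holds for \emph{every} symmetric union (\cite[Theorem~2.6]{Lam00}), and since a $2$-bridge knot has $\det K\neq 1$ this shows $K_D$ is knotted, hence a genuine $2$-bridge knot (your ``include the unknot as a degenerate $2$-bridge'' does not match the statement being proved, and the exclusion of the unknot is needed below). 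Second, if the union were even, Proposition~\ref{prop:orbifold} gives a meridian-preserving epimorphism $G(K)\twoheadrightarrow G(K_D)$ killing the preferred longitude, and this contradicts \cite[Propositions~1.8 and~1.10]{BBRW10}: an epimorphism from a $2$-bridge knot group onto the group of a \emph{non-trivial} knot never kills the longitude. That is the entire content of the skewness claim, and it is a theorem, not a routine verification.

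Your two proposed substitutes do not close this gap. The ``more concrete'' route via $\det K=(\det K_D)^2$, the lens-space structure of $\Sigma_2(K)$, and hopficity of dihedral groups cannot work in principle: all of those invariants and the $\pi$-orbifold epimorphism exist for skew unions as well, so they are blind to evenness; only the lifted epimorphism on the honest knot groups (with the longitude-killing condition) distinguishes the even case, and your argument never exploits it. The first route (Ohtsuki--Riley--Sakuma-type rigidity of epimorphisms from $2$-bridge knot groups plus the longitude condition) is in the right spirit---it is essentially what \cite{BBRW10} prove---but you leave it as a goal rather than an argument, and the intermediate reduction you aim for is also logically off: even if one showed that evenness forces $K_D$ to be trivial, it is false that the symmetric union then ``degenerates to $K_D\sharp K_D^*=U$''; a symmetric union with trivial partial knot need not be the unknot, and the correct way to exclude a trivial partial knot here is again the determinant identity, not a degeneration of the diagram. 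So part (2) as written rests on an unproven (and partly misdirected) rigidity step; citing or reproving the result of \cite{BBRW10} is what is needed.
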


When $K$ is a $2$-bridge knot, $\det K = (\det K_D)^2 \neq 1$, then $K_D$ is knotted and hence a $2$-bridge knot.
The symmetric union is skew because an epimorphism of a $2$-bridge knot group onto the group of a non-trivial knot never kills the longitude by 
\cite[Propositions~1.8 and 1.10]{BBRW10}.

Theorem~\ref{thm:arb} also puts some restriction on the partial knots of the symmetric union presentations for arborescent knots.

\begin{corollary}\label{cor:arbpartial}
Let $K$ be an arborescent knot having a symmetric union presentation with partial knot $K_D$. 
Then no factor of a connected sum decomposition of $K_D$ can be a $\pi$-hyperbolic knot.
\end{corollary}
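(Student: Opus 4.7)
The plan is to deduce Corollary~\ref{cor:arbpartial} as an essentially immediate combination of Proposition~\ref{prop:orbifold} and Theorem~\ref{thm:arb}. First, Proposition~\ref{prop:orbifold} (Eisermann's theorem) supplies an epimorphism $\varphi_D^\orb\colon G^\orb(K) \twoheadrightarrow G^\orb(K_D)$ whenever $K$ admits a symmetric union presentation with partial knot $K_D$. By the very definition of the preorder $\succeq$, this says exactly that $K \succeq K_D$. Thus the problem is translated into one about $\pi$-domination, and Theorem~\ref{thm:arb} is the obvious tool.

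Second, I would check that the hypotheses of Theorem~\ref{thm:arb} apply. We are given that $K$ is an arborescent knot, and it only remains to verify that $\det K \neq 0$. This holds for any knot (not just an arborescent one): since $\Sigma_2(K)$ is a rational homology $3$-sphere for any knot $K \subset S^3$, its first homology has finite order $|\det K|$, and a classical parity argument using a Seifert matrix $V$ shows that $\det(V+V^T)$ is odd, hence nonzero. So $\det K \neq 0$ is automatic for the arborescent knot $K$.

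Third, if $K_D$ is the unknot, the conclusion of Corollary~\ref{cor:arbpartial} is vacuous because the unknot has no nontrivial connected sum decomposition and is not $\pi$-hyperbolic. If $K_D$ is not the unknot, then Theorem~\ref{thm:arb}, applied with $L = K$ and $L' = K_D$, yields directly that no factor of a connected sum decomposition of $K_D$ can be a $\pi$-hyperbolic link, which is precisely the desired statement.

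There is really no serious obstacle here: the content of the corollary is entirely absorbed into Theorem~\ref{thm:arb}, and the only thing to check beyond quoting the two prior results is the determinant condition, which for a knot follows from the standard fact that $\det K$ is odd. The corollary should therefore be stated and proved as a short application of the two earlier results.
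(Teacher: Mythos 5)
Your proposal is correct and follows the same route as the paper: Proposition~\ref{prop:orbifold} gives $K \succeq K_D$, and Theorem~\ref{thm:arb} applies since $\det K$ is odd (hence nonzero) for any knot. Your explicit remarks on the determinant and on the trivial unknot case are fine and merely spell out what the paper leaves implicit.
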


In the same way, the next result is a direct consequence of Theorem~\ref{thm:small}. 
It shows, in particular, that only finitely many distinct partial knots can occur in a symmetric union presentation of a small knot $K$.

\begin{corollary}
\label{cor:finiteness}
Let $K$ be a knot having a symmetric union presentation with partial knot $K_D$. 
If $K$ is a small knot, then the following hold.
\begin{enumerate}[label=\textup{(\arabic*)}]
\item $K_D$ is a small knot.
\item $K_D$ belongs to only finitely many distinct knot types.
\end{enumerate}
\end{corollary}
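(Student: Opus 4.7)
The plan is to reduce Corollary~\ref{cor:finiteness} directly to Theorem~\ref{thm:small} via the $\pi$-domination relation coming from the symmetric union construction.

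First, I would invoke Proposition~\ref{prop:orbifold} (Eisermann's result): for any symmetric union presentation of $K$ with partial knot $K_D$, there is an epimorphism $\varphi_D^{\orb}\colon G^{\orb}(K) \twoheadrightarrow G^{\orb}(K_D)$. By the definition of the preorder $\succeq$, this means precisely that $K \succeq K_D$. So every partial knot $K_D$ that can appear in any symmetric union presentation of $K$ satisfies $K \succeq K_D$.

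For part (1), I would then simply apply Theorem~\ref{thm:small}(1): since $K$ is assumed to be a small knot and $K \succeq K_D$, the link $K_D$ is also small. (Note that $K_D$ is a knot because a symmetric union is always a knot and the partial knot is a single component obtained from the diagram $D$, so the one-component condition is automatic; alternatively it follows from Proposition~\ref{prop:firstproperties}(1).)

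For part (2), I would apply Theorem~\ref{thm:small}(2): since $K$ is small, the set
\[
\{ L' \subset S^3 \mid K \succeq L' \}
\]
consists of only finitely many links up to equivalence. All partial knots of symmetric union presentations of $K$ lie in this set, so there are only finitely many distinct knot types for $K_D$.

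There is essentially no obstacle here; the work has already been done in Proposition~\ref{prop:orbifold} and Theorem~\ref{thm:small}. The only subtlety worth a sentence in the write-up is to emphasize that the finiteness in (2) is a statement about partial knots as one ranges over \emph{all} symmetric union presentations of the fixed small knot $K$, not just one presentation, so that Theorem~\ref{thm:small}(2) is being applied to bound a set that a priori could depend on many choices of diagram and tangle replacements.
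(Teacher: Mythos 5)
Your proposal is correct and is exactly the paper's argument: Proposition~\ref{prop:orbifold} gives $K \succeq K_D$ for any symmetric union presentation, and then Theorem~\ref{thm:small}(1) and (2) yield smallness of $K_D$ and the finiteness of possible partial knot types, respectively. The paper likewise presents the corollary as a direct consequence of Theorem~\ref{thm:small}, so there is nothing to add.
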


For a $2$-bridge symmetric union, the finiteness of the possible partial knots has been recently proved by Lamm and Tanaka~\cite{LaTa25} by using the fact that the partial knots are alternating because they have at most two bridges.
See also \cite{Tan24}.

\subsection{Proof of Theorem~\ref{thm:notsymmetricunion}}
We now prove Theorem~\ref{thm:notsymmetricunion} which provides obstructions for a $\pi$-minimal knot $K$ to be a symmetric union, in terms of the rank of the homology of its $2$-fold branched cover $\Sigma_2(K)$.
If $K$ is $\pi$-minimal and admits a symmetric union presentation with partial knot $K_D$, then the $\pi$-orbifold group $G^\orb(K_D)$ is isomorphic to either $G^\orb(K)$, a dihedral group, or $\Z/2\Z$.

The next proposition shows that $G^\orb(K_D)$ cannot be isomorphic to $G^\orb(K)$. 
It gives a positive answer to \cite[Question~(2)]{LaTa25}.

\begin{proposition}\label{prop:notequivalent}
Let $K$ be a symmetric union with partial knot $K_D$. 
If $K$ is non-trivial, then $G^\orb(K)$ is not isomorphic to $G^\orb(K_D)$. 
In particular, $K_D$ cannot be equivalent to $K$.
\end{proposition}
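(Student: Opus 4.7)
The plan is to argue by contradiction. Suppose that $K$ is non-trivial and nevertheless $G^\orb(K) \cong G^\orb(K_D)$. First, by Proposition~\ref{prop:orbifold}, the symmetric union presentation of $K$ provides an epimorphism $\varphi_D^\orb \colon G^\orb(K) \twoheadrightarrow G^\orb(K_D)$ that kills the class of the preferred longitude $\lambda$ of $K$. Since $\pi$-orbifold groups are residually finite, hence hopfian (as recalled in Section~\ref{subsec:order}), any epimorphism between two isomorphic such groups is automatically an isomorphism. Therefore $\varphi_D^\orb$ is an isomorphism and $\lambda = 1$ in $G^\orb(K)$.

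The crux is then to deduce that $\lambda = 1$ already in the ordinary knot group $G(K)$, which by Dehn's lemma will contradict the non-triviality of $K$. To this end, I would pass to the $2$-fold branched cover $p\colon \Sigma_2(K) \to S^3$ with covering involution $\tau$ and branch locus $\tilde K = \Fix(\tau)$. Since $\lk(\lambda, K) = 0$, the longitude lifts to a disjoint pair $\tilde\lambda_1 \sqcup \tilde\lambda_2$ of simple closed curves on $\partial N(\tilde K)$, interchanged by $\tau$; both are trivial in $\pi_1(\Sigma_2(K))$ because $\pi_1(\Sigma_2(K))$ embeds in $G^\orb(K)$ as an index-$2$ normal subgroup containing the class of $\lambda$. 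Applying the equivariant Dehn lemma of Meeks-Yau to the $\tau$-invariant curve system $\tilde\lambda_1 \sqcup \tilde\lambda_2$ on $\partial N(\tilde K)$, one obtains embedded disks $\Delta_1, \Delta_2 \subset \Sigma_2(K)$ with $\partial\Delta_i = \tilde\lambda_i$ for which $\tau(\Delta_i)$ is either $\Delta_i$ or disjoint from $\Delta_i$. Since $\tilde\lambda_1 \neq \tilde\lambda_2$, the case $\tau(\Delta_1) = \Delta_1$ is excluded, so $\tau(\Delta_1) = \Delta_2$ with $\Delta_1 \cap \Delta_2 = \emptyset$. Any point of $\Delta_1 \cap \tilde K$ would be $\tau$-fixed and hence lie in $\Delta_1 \cap \tau(\Delta_1) = \Delta_1 \cap \Delta_2 = \emptyset$, forcing $\Delta_1 \cap \tilde K = \emptyset$. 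Consequently $\Delta_1$ descends under $p$ to an immersed disk in $E(K)$ bounded by $\lambda$; Dehn's lemma (Papakyriakopoulos) then upgrades this to an embedded disk in $E(K)$ with boundary $\lambda$, which together with the meridional annulus between $\lambda$ and $K$ in $N(K)$ exhibits $K$ as the boundary of an embedded disk in $S^3$. Thus $K$ is the unknot, a contradiction.

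The main obstacle is the equivariant Dehn lemma step: it must be applied to a $\tau$-invariant pair of curves that $\tau$ \emph{interchanges} rather than to a single $\tau$-fixed curve, and one must verify that the resulting disks are $\tau$-equivariant in the strong sense that their images are disjoint. An alternative route would be to invoke the orbifold loop theorem directly in the closed $3$-orbifold $\OO(K)$ (see for example \cite{BMP03}), but this would still require excluding intersections of the resulting orbifold disk with the singular locus, which is exactly what the equivariance upstairs guarantees.
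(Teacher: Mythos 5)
Your opening is the same as the paper's: hopficity of $G^\orb(K)$ forces $\varphi_D^\orb$ of Proposition~\ref{prop:orbifold} to be an isomorphism, hence the image $\bar{\lambda}_K$ of the preferred longitude is trivial in $G^\orb(K)$. The gap is in the step that converts this into unknottedness of $K$. The Meeks--Yau equivariant Dehn lemma produces equivariant embedded disks only for a curve system lying on the \emph{boundary} of the compact $3$-manifold in which the disks are sought, and null-homotopic \emph{in that manifold}. Here the only manifold with boundary available is the exterior $M' = \Sigma_2(K) \setminus \Int N(\widetilde{K})$, on which $\tau$ acts freely and whose boundary contains $\tilde\lambda_1 \sqcup \tilde\lambda_2$; but the hypothesis you would need --- that $\tilde\lambda_i$ is null-homotopic in $M'$ --- is not what you have. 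Via the free $2$-fold covering $M' \to E(K)$ it is equivalent to $\lambda = 1$ in $G(K)$, i.e.\ (by Dehn's lemma) to $K$ being unknotted, which is exactly the conclusion you are after; so the application is circular. What the triviality of $\bar{\lambda}_K$ in $G^\orb(K)$ actually gives is only a singular disk in the \emph{closed} manifold $\Sigma_2(K)$, which may cross the branch locus $\widetilde{K}$, and no form of the equivariant Dehn lemma or orbifold loop theorem upgrades such a disk to an embedded disk disjoint from $\Fix(\tau)$; your closing remark acknowledges this obstacle but does not remove it. (A null-homotopic curve on an interior torus need not bound any disk, embedded or not, in the complementary piece.)

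Note also that you never use the determinant relation, whereas the paper does so crucially: from the isomorphism one gets $\det K = \det K_D$, and Lamm's identity $\det K = (\det K_D)^2$ then forces $\det K = 1$. This reduction is what makes the paper's Lemma~\ref{lem:longitude} work: triviality of $\bar{\lambda}_K$ in $G^\orb(K)$ is equivalent to $\widetilde{K}$ being null-homotopic in $\Sigma_2(K)$, which is excluded by Smith theory on the contractible universal cover when $\pi_1(\Sigma_2(K))$ is infinite, by the classification of determinant-one knots with finite $\pi_1(\Sigma_2(K))$ (only the $(3,5)$-torus knot, with $\widetilde{K}$ a singular Seifert fiber of the Poincar\'e sphere) in the finite case, and by a free-product argument for connected sums (Claim~\ref{claim:finiteorder}). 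Your route attempts to prove the stronger statement that $\bar{\lambda}_K = 1$ in $G^\orb(K)$ already forces $K$ trivial without any such case analysis, but the equivariant Dehn lemma cannot carry that weight as invoked; to repair the argument you would either need to supply the missing null-homotopy in the exterior of $\widetilde{K}$ (unavailable) or fall back on an argument of the paper's type.
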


\begin{proof}
We assume that $G^\orb(K)$ is isomorphic to $G^\orb(K_D)$ and look for a contradiction. 
Since the group $G^\orb(K)$ is hopfian, the epimorphism $\varphi_D^\orb\colon G^\orb(K) \twoheadrightarrow G^\orb(K_D)$ given in Proposition~\ref{prop:orbifold} must be an isomorphism. 
The isomorphism $\varphi_D^\orb$ induces an isomorphism $\tilde{\varphi}_D\colon \pi_1(\Sigma_2(K)) \twoheadrightarrow \pi_1(\Sigma_2(K_D))$. 
Therefore,
\[
\det K = \vert H_1(\Sigma_2(K); \Z) \vert = \vert H_1(\Sigma_2(K_D); \Z) \vert = \det K_D.
\]
By \cite[Theorem~2.6]{Lam00}, $\det K= (\det K_D)^2$ and thus $\det K= \det K_D =1$. 
Since $\varphi_D^\orb$ kills the image $\bar{\lambda}_{K}$ of the preferred longitude of $K$, $\bar{\lambda}_{K}$ must be trivial in $G^\orb(K)$. 
Then, the contradiction comes from the following lemma.
\end{proof}

\begin{lemma}\label{lem:longitude}
Let $K$ be a non-trivial knot with $\det K = 1$.
Then the image $\bar{\lambda}_{K}$ of the preferred longitude of $K$ is non-trivial in $G^\orb(K)$.
\end{lemma}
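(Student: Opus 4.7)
I will prove the contrapositive: assuming $\bar\lambda_K = 1$ in $G^{\orb}(K) = \pi_1^{\orb}(\OO(K))$, I show $K$ must be the unknot. The hypothesis says that $\lambda_K$, viewed as a loop in the smooth part $E(K)$ of $\OO(K)$, is orbifold-null-homotopic, so by the loop theorem / Dehn's lemma for $3$-orbifolds, $\lambda_K$ bounds an embedded compressing orbifold disk $D \subset \OO(K)$.

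Since the singular locus of $\OO(K)$ is $K$ with branching index $2$, such a $D$ is either (a) a genuine disk in $E(K)$ disjoint from $K$, or (b) a teardrop $D^2(2)$, i.e.\ a disk in $S^3$ meeting $K$ transversally in one interior point. Alternative (b) is ruled out by the defining property of the preferred longitude: such a single transverse intersection would give $|\lk(K, \lambda_K)| = 1$, contradicting $\lk(K, \lambda_K) = 0$. So $D \subset E(K)$, and attaching to $D$ along $\lambda_K$ the annulus $A \subset \nu(K)$ cobounded by $\lambda_K$ and $K$ yields an embedded disk in $S^3$ bounded by $K$. Hence $K$ is the unknot, a contradiction.

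The main obstacle is the invocation of Dehn's lemma in the orbifold setting. As a technically elementary alternative, one can argue via the $2$-fold branched cover: under the inclusion $\pi_1(\Sigma_2(K)) \hookrightarrow G^{\orb}(K)$, the element $\bar\lambda_K$ corresponds to the image of the lift $\tilde\lambda$, which is a longitude of the branched knot $\tilde K$ and hence freely homotopic to $\tilde K$ via the solid torus $\nu(\tilde K)$. The hypothesis then says that $\tilde K$ is null-homotopic in $\Sigma_2(K)$; Dehn's lemma produces an embedded disk bounded by $\tilde K$, the equivariant Dehn's lemma of Meeks--Yau makes it $\tau$-equivariant (either $\tau(D) = D$ or $\tau(D) \cap D = \partial D$), and Smith theory rules out the first option because $\Fix(\tau) = \tilde K$ has dimension $1$ while $D$ has dimension $2$. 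In the remaining case, $D \cup \tau(D)$ is an embedded $\tau$-invariant $2$-sphere whose quotient in $\Sigma_2(K)/\tau = S^3$ is an embedded disk with boundary $K$, again forcing $K$ to be the unknot. Note that the hypothesis $\det K = 1$ is not strictly used in either argument; it arises naturally only from the application in the proof of Proposition~\ref{prop:notequivalent}.
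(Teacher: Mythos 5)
There is a genuine gap, and it is fatal to both versions of your argument: you invoke Dehn's lemma for a simple closed curve lying in the \emph{interior} of a closed $3$-manifold (resp.\ closed $3$-orbifold). Dehn's lemma and the loop theorem convert a null-homotopy into an embedded disk only when the curve lies in the boundary (more precisely, when the singular disk is embedded near its boundary); a null-homotopic simple closed curve in the interior of a closed $3$-manifold need \emph{not} bound any embedded disk --- any knotted, null-homotopic circle already shows this --- and likewise a curve that dies in $\pi_1^{\orb}(\OO(K))$ need not bound a discal $2$-suborbifold. So the step ``Dehn's lemma produces an embedded disk bounded by $\widetilde K$'' (and its orbifold analogue in your first paragraph) is unjustified, and the subsequent Meeks--Yau equivariant step and the quotient argument have nothing to stand on. Your closing remark that $\det K=1$ ``is not strictly used'' is in fact the tell-tale sign that the argument cannot be repaired as written: the statement is false without that hypothesis. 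Indeed, for any non-trivial $2$-bridge knot $K$ one has $\pi_1(\Sigma_2(K))\cong \Z/\det K$ abelian, and $\widetilde K$ is null-homologous in $\Sigma_2(K)$ (it bounds a lift of a Seifert surface), hence null-homotopic; since $\bar\lambda_{K}$ is freely homotopic to $\widetilde K$, it is trivial in $G^\orb(K)$ although $K$ is knotted. For the trefoil you can check this by hand: sending the Wirtinger meridians $a,b,c$ to the three transpositions of $S_3\cong D_6$, the preferred longitude $cab\,a^{-3}$ maps to the identity.

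For comparison, the paper's proof also reduces the lemma to showing that $\widetilde K$ is homotopically non-trivial in $\Sigma_2(K)$, but it never tries to produce an embedded disk. When $\pi_1(\Sigma_2(K))$ is infinite, $\Sigma_2(K)$ is aspherical and Smith theory applied to a lift of the covering involution to the contractible universal cover shows that $\Fix$ of the lifted involution is connected and non-compact, so $\widetilde K$ has infinite order; when $\pi_1(\Sigma_2(K))$ is finite, the orbifold theorem together with $\det K=1$ forces $\Sigma_2(K)$ to be $S^3$ or the Poincar\'e sphere, where $\widetilde K$ is a singular fiber and hence non-trivial; connected sums are then handled through the free product decomposition, again using $\det K = 1$ to control each factor. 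If you want to salvage your approach, you would have to replace the Dehn-lemma step by an argument of this homotopical type and use the determinant hypothesis exactly where the finite-fundamental-group cases (which include all $2$-bridge knots) must be excluded.
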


\begin{proof}
Let $\lambda_K \in G(K)$ be a peripheral element representing the preferred longitude of $K$.
Since $\lambda_K$ belongs to the commutator subgroup of $G(K)$, its image $\bar{\lambda}_{K}$ in $G^\orb(K)$ belongs to the subgroup $\pi_1(\Sigma_2(K))$ of index $2$.
Let $p\colon \Sigma_2(K) \to S^3$ denote the covering projection.
Then $\widetilde{K} = p^{-1}(K)$ is the fixed point set of the covering involution $\tau$ on $\Sigma_2(K)$.
In $\pi_1(\Sigma_2(K))$, the element $\bar{\lambda}_{K}$ represents a preferred longitude of the null-homologous knot $\widetilde{K}$.
Since the preferred longitude is homotopic to the knot $\widetilde{K}$, $\bar{\lambda}_{K}$ is trivial in $G^\orb(K)$ if and only if the knot $\widetilde{K}$ is homotopically trivial in $\Sigma_2(K)$.
We use the following claim when the knot $K$ is prime.

\begin{claim} \label{claim:finiteorder}
Let $K$ be a prime knot.
Then, $\widetilde{K}$ has finite order in $\pi_1(\Sigma_2(K))$ if and only if $\pi_1(\Sigma_2(K))$ is finite, that is to say $K$ is a $2$-bridge knot or an elliptic Montesinos knot.
If in addition $\det K = 1$, then $K$ is the $(3, 5)$-torus knot and $\widetilde{K}$ is homotopically non-trivial in $\Sigma_2(K)$.
\end{claim}

If $K$ is a prime knot, it follows from Claim~\ref{claim:finiteorder} that the image $\bar{\lambda}_{K}$ of the preferred longitude of $K$ is non-trivial in $\pi_1(\Sigma_2(K)) \subset G^\orb(K)$.
It remains to consider the case where $K$ is a connected sum of prime knots $K_1,\dots,K_n$.
Then $K = K_1 \sharp K'$ with $K' = K_2 \sharp \cdots \sharp K_{n}$.
Moreover, $\det K_1 = \det K' = 1$ since $\det K = 1$.
The $2$-fold branched cover of $K$ is $\Sigma_2(K) = \Sigma_2(K_1) \sharp \Sigma_2(K')$ and
\[
\bar{\lambda}_{K} = \bar{\lambda}_{K_1}\cdot\bar{\lambda}_{K'} \in \pi_1(\Sigma_2(K)) = \pi_1(\Sigma_2(K_1)) \ast \pi_1(\Sigma_2(K')).
\]
Now, $\bar{\lambda}_{K_1} \neq 1$ in the factor $\pi_1(\Sigma_2(K_1))$ by Claim~\ref{claim:finiteorder}, and $\bar{\lambda}_{K'}$ belongs to the second factor $\pi_1(\Sigma_2(K'))$.
It follows that $\bar{\lambda}_{K} \neq 1$ in $\pi_1(\Sigma_2(K))$.
This completes the proof of Lemma~\ref{lem:longitude}.
\end{proof}

\begin{proof}[Proof of Claim~\ref{claim:finiteorder}]
Let $p\colon \Sigma_2(K) \to S^3$ be the covering projection.
One needs only to prove that if $\pi_1(\Sigma_2(K))$ is infinite, then $\widetilde{K}= p^{-1}(K)$ has infinite order in $\pi_1(\Sigma_2(K))$.
Since $K$ is a prime knot, $\Sigma_2(K)$ is irreducible by the equivariant sphere theorem.
Hence, if $\pi_1(\Sigma_2(K))$ is infinite, then $\Sigma_2(K)$ is aspherical and its universal cover $X $ is contractible (in fact, it is $\R^3$ by the orbifold theorem, see \cite{BoPo01}).
One can lift the covering involution $\tau$ on $\Sigma_2(K)$ to an involution $\tau'$ on the universal cover $X$. Since $X$ is contractible, Smith theory implies that $\Fix(\tau')$ is connected and not compact: it is a component of the preimage of $\widetilde{K}$ in $X$.
Thus $\widetilde{K}$ is of infinite order in $\pi_1(\Sigma_2(K))$. 

By the orbifold theorem, the case where $\pi_1(\Sigma_2(K))$ is finite occurs when $K$ is the unknot, a $2$-bridge knot or an elliptic Montesinos knot with three bridges.
If moreover $\det K = 1$, the $2$-fold branched cover $\Sigma_2(K)$ is $S^3$ or the Poincar\'{e} sphere and the knot $K$ is the unknot or the $(3, 5)$-torus knot which is the unique knot with the Poincar\'{e} sphere as $2$-fold branched cover (see \cite[Affirmation~2.5]{BoOt91}).
In particular, the fixed point set $\widetilde{K}$ of the covering involution is isotopic to a singular fiber of the Seifert fibration of the Poincar\'e sphere $\Sigma_2(K)$.
Thus, $\widetilde{K}$ is homotopically non-trivial in $\Sigma_2(K)$.
\end{proof}

A knot whose $\pi$-orbifold group is dihedral or $\Z/2\Z$ is a $2$-bridge knot or the unknot.
Hence, the next corollary follows from Proposition~\ref{prop:notequivalent} and the definition of $\pi$-minimality.

\begin{corollary}\label{cor:minimalsymmetric} 
If a symmetric union $K$ with partial knot $K_D$ is $\pi$-minimal, then $K_D$ has at most two bridges.
\end{corollary}

Therefore, to prove Theorem~\ref{thm:notsymmetricunion}, it remains to show that the condition on the rank of $H_1(\Sigma_2(K); \Z)$ rules out the possibility of a partial knot $K_D$ with $\leq 2$ bridges. 
This follows from the next proposition since $H_1(\Sigma_2(K); \Z)$ is cyclic or trivial for a knot with $\leq 2$ bridges.

Recall that a finite abelian group $A$ is isomorphic to a direct sum of finite cyclic groups: $G \cong \Z/d_{1}\Z \oplus \cdots \oplus \Z/d_{r}\Z$, where $r =\rank(A)$ and $d_i$ $(\geq 2)$ divides $d_{i+1}$ for $1 \leq i \leq r-1$. 
The invariant factors $\{d_1, \ldots, d_r\}$ are unique.
Let $p$ be a prime number dividing $d_1$ (hence, $p\mid d_i$ for $2 \leq i \leq r$) and let $\F_p$ denote the finite field of order $p$. 
Then, $\dim_{\F_p} (A\otimes \F_p) = r$.

If $K$ admits a symmetric union presentation with partial knot $K_D$, then it is proved in \cite[Theorem~1.3]{Zup26} that 
\[
\col_p(K_D) \leq \col_p(K) \leq \frac{\col_p(K_D)^2}{p},
\]
where $p$ is an odd prime and $\col_p(K)$ denotes the number of $p$-colorings for any diagram of $K$. 
Since $\col_p(K) = p^{1+\dim_{p} H_1(\Sigma_2(K); \F_p)}$, this result implies some inequalities between the mod $p$ first Betti numbers of a symmetric union and of its partial knot. 
This is the content of the next proposition for which we give a direct proof (which is also valid for $p=2$).

\begin{proposition}\label{prop:rank}
If $K$ admits a symmetric union presentation with partial knot $K_D$, then
\[
\rank H_1(\Sigma_2(K_D); \Z) \leq \rank H_1(\Sigma_2(K); \Z) \leq 2\rank H_1(\Sigma_2(K_D); \Z).
\]
\end{proposition} 

\begin{proof} 
By Lemma~\ref{lem:epicover}, the epimorphism $\varphi_D^\orb \colon G^\orb(K) \twoheadrightarrow G^\orb(K_D)$ induces an epimorphism $\tilde{\varphi}_D\colon \pi_1(\Sigma_2(K)) \twoheadrightarrow \pi_1(\Sigma_2(K_D))$. 
Therefore, one gets the first inequality in the statement.

The proof of the second inequality uses the fact that the Goeritz matrix of a diagram of a knot $K$ is a presentation matrix for $H_1(\Sigma_2(K); \Z)$. 
In \cite[Section~3.2]{Lam00}, it is proved that the Goeritz matrix $G_K$ for a symmetric union diagram of $K$ with partial knot $K_D$ is equivalent to
$
\begin{pmatrix}
 G_D & \ast \\ O & -G_D
\end{pmatrix}
$
by a combination of elementary row and column operations, where $G_D$ and $-G_D$ are the Goeritz matrices for the knot diagram $D$ and $D^{*}$, respectively. 
Then, for any field $\F$, one gets $\rank_\F G_K \geq 2\rank_\F G_D$, and thus 
\[
\dim_{\F} H_1(\Sigma_2(K); \F) \leq 2\dim_{\F} H_1(\Sigma_2(K_D); \F).
\]
Letting $\F=\F_p$ for a suitable prime $p$, we obtain the second inequality $\rank H_1(\Sigma_2(K); \Z) \leq 2\rank H_1(\Sigma_2(K_D); \Z)$.
\end{proof}

\begin{corollary}\label{cor:cyclichomology}
Let $K$ be a knot such that $\rank H_1(\Sigma_2(K); \Z) \geq 3$.
If $K$ admits a symmetric union presentation with partial knot $K_D$, then $H_1(\Sigma_2(K_D); \Z)$ cannot be cyclic.
\end{corollary}

We call a knot $K \subset S^3$ \emph{almost $\pi$-minimal} if $K \succeq K'$ implies that $G^\orb(K')$ is either finite or isomorphic to $G^\orb(K)$. 
If $G^\orb(K')$ is finite, then $H_1(\Sigma_2(K'); \Z)$ is cyclic, see \cite[Theorem~2(iii) and (v) in Section~6.2]{Orl72}. 
Therefore, Proposition~\ref{prop:notequivalent} and Corollary~\ref{cor:cyclichomology} imply the following improvement of Theorem~\ref{thm:notsymmetricunion}.

\begin{corollary} \label{cor:notsymmetric} 
Let $K$ be an almost $\pi$-minimal knot. 
If $\rank H_1(\Sigma_2(K); \Z) \geq 3$, then $K$ cannot admit a symmetric union presentation.
\end{corollary}

For example, the proof of \cite[Proposition~5.1]{BKN25} shows that the ribbon knot $11a_{201}$ is almost $\pi$-minimal, but $H_1(\Sigma_2(11a_{201}); \Z)\cong \Z/81\Z$ is cyclic.

It is still open whether the ribbon Montesinos knot $11a_{103} = K(\frac{2}{3}, \frac{1}{3}, \frac{2}{7})$ admits a symmetric union presentation or not. 
The next proposition shows that this knot is $\pi$-minimal. 
Hence, by Corollary~\ref{cor:minimalsymmetric}, the only possible partial knots for a symmetric union presentation of the knot $11a_{103}$ are two $2$-bridge knots $6_1$ or $9_1$ because $\det(11a_{103}) = 81$.

\begin{proposition} \label{prop:11a_{103}} 
The ribbon Montesinos knot $11a_{103} = K(\frac{2}{3}, \frac{1}{3}, \frac{2}{7})$ is $\pi$-minimal.
\end{proposition}

\begin{proof} 
Let us assume that the knot $11a_{103}$ $\pi$-dominates a knot $K$. 
Since $11a_{103}$ is a Montesinos knot with three rational tangles, by Corollary~\ref{cor:knot_order}, $K$ is either a knot with $\leq 2$ bridges or a Montesinos knot with three rational tangles.

Suppose that $K$ is a Montesinos knot $K(\frac{\beta_1}{\alpha_1},  \frac{\beta_2}{\alpha_2}, \frac{\beta_3}{\alpha_3})$. 
Then, by Lemma~\ref{lem:epicover}, the epimorphism $\varphi \colon G^\orb(11a_{103}) \twoheadrightarrow G^\orb(K)$ induces an epimorphism
$\tilde{\varphi}\colon \pi_1(\Sigma_2(11a_{103})) \twoheadrightarrow \pi_1(\Sigma_2(K))$. 
Here, the $2$-fold cover $\Sigma_2(K) = V(0; e_0; \frac{\beta_1}{\alpha_1},  \frac{\beta_2}{\alpha_2}, 
\frac{\beta_3}{\alpha_3})$ is a Seifert fibered manifold with base $S^2(\alpha_1, \alpha_2, \alpha_3)$ and rational Euler number 
$e_0 =  \sum_{i=1}^{3} \frac{\beta_i}{\alpha_i}  = \frac{|H_1(\Sigma_2(K); \Z)|}{\alpha_1 \alpha_2 \alpha_3}$. 
In the same way, $\Sigma_2(11a_{103}) = V(0; \frac{9}{7}; \frac{2}{3}, \frac{1}{3}, \frac{2}{7})$ is Seifert fibered with base $S^2(3, 3, 7)$ and rational Euler number $\frac{9}{7} = \frac{|H_1(\Sigma_2(K); \Z)|}{63}$, and hence $|H_1(\Sigma_2(K); \Z)| = 81$.

The epimorphism $\tilde{\varphi}$ induces an epimorphism $\bar{\varphi}\colon T(3, 3, 7) \twoheadrightarrow  T(\alpha_1, \alpha_2, \alpha_3)$ through taking the quotients by the centers. 
The presentation of the triangle group $T(3, 3, 7) =  \langle x, y, z \mid x^{3} = y^{3} = z^{7} =xyz = 1 \rangle$ shows that each image $\bar{\varphi}(x)$, $\bar{\varphi}(y)$, and $\bar{\varphi}(z)$ is non-trivial, otherwise the image $\bar{\varphi}(T(3, 3, 7))$ would be either trivial or $\Z/3\Z$. 
Hence, $\bar{\varphi}(T(3, 3, 7)) = T(\alpha_1, \alpha_2, \alpha_3)$ must contain elements of orders $3$ and $7$. 
We distinguish two cases according to whether $\frac{1}{\alpha_1} + \frac{1}{\alpha_2} + \frac{1}{\alpha_3} \geq 1$ or not.

If $\frac{1}{\alpha_1} + \frac{1}{\alpha_2} + \frac{1}{\alpha_3} \geq 1$, then the only possibilities for the triple $\{\alpha_1, \alpha_2, \alpha_3\}$ are $\{2, 3, 3\}$, $\{2, 3, 5\}$, and $\{3, 3, 3\}$ since at most one of $\alpha_i$ is even because $\Sigma_2(K)$ is a $\Z/2\Z$-homology sphere. 
This case is impossible since neither $T(2, 3, 3)$, $T(2, 3, 5)$, nor $T(3, 3, 3)$ contains an element of order $7$.

When $\frac{1}{\alpha_1} + \frac{1}{\alpha_2} + \frac{1}{\alpha_3} < 1$, $\bar{\varphi}(T(3, 3, 7)) = T(\alpha_1, \alpha_2, \alpha_3) \subset \PSL(2,\mathbb{R})$ is a hyperbolic triangle group generated by two elliptic elements  $a = \bar{\varphi}(y)$ and $b = \bar{\varphi}(z)$ of orders $3$ and $7$, respectively.
Then, up to taking suitable powers $u=a^{k}$ and $v = b^{\ell}$ to normalize the matrix representatives of $u$ and $v$ in $\SL(2,\R)$, at least one of Cases~(I)--(VII) in \cite[Theorem~2.3]{Kna68} holds. 
Since the triangle group $ T(\alpha_1, \alpha_2, \alpha_3)$ is co-compact, Case~(II) is impossible by \cite[Figures~2 and 3]{Kna68}.
Cases~(III) and (VI) do not hold because the generators $u$ and $v$ do not have the same order. 
Case~(IV) is not possible because neither of the generators $u$ nor $v$ has order $2$. 
Cases (V) and (VII) correspond to the case where $u$ and $v$ generate a hyperbolic triangle group of type $T(2, 3, 7)$ by \cite[Section~3]{Kna68}. 
Hence, among Cases~(II)--(VII), the only possibility left is that $\bar{\varphi}(T(3, 3, 7)) = T(2, 3, 7)$, which is ruled out by Lemma~\ref{lem:noepimorphism} below.

In Case~(I), $uv$ has extreme negative trace and by \cite[Proposition~2.2]{Kna68}, $u$ and $v$ generate a hyperbolic triangle group 
$T(3, 7, n)$. 
The epimorphism $\bar{\varphi} \colon  T(3, 3, 7)$ $\twoheadrightarrow  T(3, 7, n),$
implies that $-\chi(S^2(3, 3, 7)) \geq -\chi(S^2(3, 7, n))$ by \cite[Lemma~2.5]{Ron92}. 
So, $1 - (\frac{1}{3} + \frac{1}{3} + \frac{1}{7}) \geq 1 - (\frac{1}{3} + \frac{1}{7} + \frac{1}{n})$, and hence $n \leq 3$.
Assume $n= 3$. 
Since triangle groups are hopfian, the epimorphism $\bar{\varphi}$ is an isomorphism.
Furthermore, the restriction of $\tilde{\varphi} \colon \pi_1(\Sigma_2(11a_{103})) \twoheadrightarrow \pi_1(\Sigma_2(K))$ to the center $Z(\pi_1(\Sigma_2(11a_{103})))\cong \Z$ is injective since $\pi_1(\Sigma_2(K))$ is torsion-free.
Therefore, the epimorphism $\tilde{\varphi}$ is an isomorphism and, by the five lemma, so is $\varphi \colon G^\orb(11a_{103}) \twoheadrightarrow G^\orb(K)$.
Therefore, like in Cases~(II)--(VII) above, the only possibility is that $\bar{\varphi}(T(3, 3, 7)) = T(2, 3, 7)$, which is ruled out by Lemma~\ref{lem:noepimorphism} below.
This completes the proof.
\end{proof}

\begin{lemma} \label{lem:noepimorphism}
There is no epimorphism $\tilde{\varphi}\colon \pi_1(V(0; \frac{9}{7}; \frac{2}{3}, \frac{1}{3}, \frac{2}{7})) \twoheadrightarrow
\pi_1(V(0; e_0; \frac{\beta_1}{2},  \frac{\beta_2}{3}, \frac{\beta_3}{7}))$.
\end{lemma}

\begin{proof}
Suppose, to the contrary, that there is an epimorphism $\tilde{\varphi}\colon \pi_1(M) \twoheadrightarrow \pi_1(N)$. 
Since $\beta_2$ is coprime to $3$, $\vert H_1(N; \Z) \vert = 21 \beta_1 + 14 \beta_2 + 6 \beta_3$ is coprime to $3$. 
Moreover, $\vert H_1(N; \Z) \vert$ must divide $\vert H_1(M; \Z) \vert = 81$, and hence  $\vert H_1(N; \Z) \vert = 1$. 
So $N$ is the Brieskorn homology sphere $ V(0; \frac{1}{42}; \frac{1}{2}, \frac{-1}{3}, \frac{-1}{7})$, see \cite[Theorem~7.12]{JaNe83}. 
Since $M$ and $N$ are aspherical, the epimorphism $\tilde{\varphi}$ can be realized by a map $f \colon M \to N$. 
Since both $\pi_1(M)$ and $\pi_1(N)$ have rank $2$, the $\pi_1$-surjective map $f$ is of non-zero degree by \cite[Theorem~2.1]{RWZ02}. 
Then, by \cite[Theorem~3.2 and Remark~2]{Ron93}, $f$ is homotopic to $g\circ \pi$, where $\pi$ is a composition of finitely many vertical pinches and $g$ is a fiber-preserving branched covering. 
Since $M$ has only three exceptional fibers, a vertical pinch on  $M$ would produce a Seifert fibered manifold with $\leq 2$ exceptional fibers, and hence with cyclic fundamental group. 
Therefore, up to homotopy, $f$ cannot factorize through a vertical pinch since $\pi_1(N)$ is not cyclic.
It follows that $f$ is homotopic to a fiber-preserving branched covering $g$. 
Let $d_1$ be the degree of $g$ along the fiber and $d_2$ the degree of the induced map $\bar{g} \colon S^2(3, 3, 7) \to S^2(2, 3, 7)$.
Then, $\frac{4}{21} = \vert \chi( S^2(3, 3, 7))\vert \geq d_2 \vert \chi( S^2(2, 3, 7)) \vert = \frac{d_2}{42}$. 
This implies that $8\geq d_2$. 
Moreover, $\frac{9}{7} = e_0(M) = \frac{d_2}{d_1} e_0(N) = \frac{d_2}{42 d_1}$ by \cite[Theorem~1.2]{NeRa78}. 
Then $d_1 = \frac{d_2}{54} \leq \frac{8}{54}$, which is impossible.
\end{proof}

Proposition~\ref{prop:rank} allows us to give some lower bounds for the bridge number, genus, and unknotting number of the partial knot of a symmetric union presentation.
For a knot $K$, let $b(K)$ denote the \emph{bridge number}, $g(K)$ the \emph{Seifert genus}, and $u(K)$ the \emph{unknotting number} of $K$.

\begin{corollary}\label{cor:bridgenumber}
If $K$ admits a symmetric union presentation with partial knot $K_D$, then
\begin{enumerate}[label=\textup{(\arabic*)}]
\item $b(K_D) \geq \frac{1}{2} \rank H_1(\Sigma_2(K); \Z) + 1$.
\item $g(K_D) \geq \frac{1}{4} \rank H_1(\Sigma_2(K); \Z)$.
\item $u(K_D) \geq \frac{1}{2} \rank H_1(\Sigma_2(K); \Z)$.
\end{enumerate}
\end{corollary}

\begin{proof} 
The proofs follow from Proposition~\ref{prop:rank} and the following well-known facts.
First, $b(K_D)$ satisfies the inequality $b(K_D) \geq \rank \pi_1(\Sigma_2(K_D)) + 1$, see \cite[Lemma~1.6]{BoZi85}.
Next, for a Seifert matrix $S$ of $K_D$, the sum $S+S^T$ gives a presentation matrix for the $\Z$-module $H_1(\Sigma_2(K_D); \Z)$, and therefore $2g(K_D) \geq \rank H_1(\Sigma_2(K_D); \Z)$.
Finally, Wendt's inequality~\cite{Wen37} shows that $u(K_D) \geq \rank H_1(\Sigma_2(K_D); \Z)$ (see also \cite{Kin57}).
\end{proof}

\begin{remark}
In some cases, Wendt's inequality is powerful.
For $K=13a_{1786}$, we have $u(K)\leq 3$ since a single crossing change produces $10_{75}$ whose unknotting number is $2$.
According to KnotInfo, the smooth $4$-genus of $K$ equals $1$, and hence one cannot use it to determine $u(K)$.
On the other hand, the invariant factors of $H_1(\Sigma_2(K); \Z)$ are $\{3, 3, 33\}$, and thus Wendt's inequality implies $u(K)=3$.
\end{remark}

\section{The bridge number and preorder $\succeq$}\label{sec:bridge}

Corollary~\ref{cor:bridge_number} raises a question of a relationship between the $\pi$-domination relation $\succeq$ and the bridge number $b(L)$ of a link $L$.

\begin{question}\label{ques:bridge_number} 
Let $L, L' \subset S^3$ be prime links, does $L \succeq L'$ imply $b(L) \geq b(L')$?
\end{question}

The answer to Question~\ref{ques:bridge_number} is positive when $L$ is a Montesinos knot by Corollary~\ref{cor:bridge_number}.
It is also true when $L'$ has at most three bridges, and hence when $G^\orb(L')$ is finite (by the orbifold theorem). 

\begin{lemma} \label{lem:bridge_finite} 
Let $L, L' \subset S^3$ such that $L \succeq L'$. 
If $b(L') \leq 3$, then $b(L) \geq b(L')$.
\end{lemma}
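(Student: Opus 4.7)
The plan is to proceed by contradiction and reduce to the two already-settled cases of Theorem~\ref{thm:order}. Suppose that $b(L) < b(L')$. Since $b(L') \leq 3$ by hypothesis, this forces $b(L) \in \{1, 2\}$, which places $L$ in one of the two classes of links whose $\pi$-dominated links are completely classified. So I would split the argument into these two small-bridge cases.

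First, if $b(L) = 1$, then $L$ is the unknot. Theorem~\ref{thm:order}(1) immediately yields that $L'$ is also the unknot, giving $b(L') = 1 = b(L)$, which contradicts $b(L) < b(L')$. Next, if $b(L) = 2$, then $L$ is a $2$-bridge link; Theorem~\ref{thm:order}(2) forces $L'$ to be either the unknot or a $2$-bridge link, so that $b(L') \leq 2 = b(L)$, again contradicting the assumption $b(L) < b(L')$.

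There is essentially no obstacle: the lemma is a direct consequence of the fact that the classes of links with $b(L) \leq 2$ are closed under $\succeq$ (up to collapsing to the unknot), as already recorded in the first two items of Theorem~\ref{thm:order}. Indeed, the lemma can be read as saying precisely that these two items of Theorem~\ref{thm:order} answer Question~\ref{ques:bridge_number} positively whenever the dominated link $L'$ has at most three bridges. The parenthetical remark linking $b(L')\leq 3$ to finiteness of $G^{\orb}(L')$ is then simply the observation that, by the orbifold theorem, a finite $\pi$-orbifold group can only arise for links that are the unknot, $2$-bridge links, or elliptic Montesinos links with three rational tangles.
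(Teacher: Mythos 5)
Your proof is correct and is essentially the paper's own argument read contrapositively: where you assume $b(L)\in\{1,2\}$ and apply Theorem~\ref{thm:order}(1) and (2) to bound $b(L')$, the paper cases on $b(L')\in\{1,2,3\}$ and uses the same two items to rule out $L$ being the unknot or a $2$-bridge link. There is no substantive difference in method or in the ingredients used.
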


\begin{proof}
The inequality is clear when $b(L') = 1$.
If $b(L') = 2$, then Theorem~\ref{thm:order}(1) implies that $L$ cannot be the unknot, and thus $b(L) \geq 2$. 
If $b(L') = 3$, then Theorem~\ref{thm:order}(1) and (2) imply that $b(L) \geq 3$.
\end{proof}

Question~\ref{ques:bridge_number} is also positive for torus knots.

\begin{proposition}\label{prop:bridge_torusknot} 
If $K_1$ is a torus knot and $K_1 \succeq K_2$, then $b(K_1) \geq b(K_2)$.
\end{proposition}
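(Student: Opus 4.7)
The plan is to invoke Theorem~\ref{thm:order}(4) applied to $K_1 = T(p,q)$ with $\gcd(p,q)=1$ and $2 \leq p \leq q$, so that $b(K_1) = p$. The torus knot $K_1$ is a Seifert knot with $\det K_1 \neq 0$, since $\Sigma_2(K_1)$ is a Seifert-fibered rational homology sphere (a Brieskorn manifold), and $\rank \pi_1(\Sigma_2(K_1)) \leq 2$; also $K_2$ is a knot by Proposition~\ref{prop:firstproperties}(1). If $p = 2$ then $K_1$ is a $2$-bridge knot and Theorem~\ref{thm:order}(2) gives $b(K_2) \leq 2 = p$ immediately. If $p \geq 3$, Theorem~\ref{thm:order}(4) narrows $K_2$ down to the unknot, a $2$-bridge knot, an elliptic Montesinos knot, a Seifert knot (hence a torus knot $T(p', q')$ by \cite{BuMu70}), or a connected sum of $n_1$ $2$-bridge knots and $n_2$ elliptic Montesinos knots with $n_1 + 2n_2 \leq 2$. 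In every one of these cases except the torus-knot one, Schubert's additivity of bridge number gives $b(K_2) \leq 3 \leq p$.

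It therefore remains to handle $K_2 = T(p', q')$ with $p' \geq 3$ and show $p' \leq p$. The finite/elliptic exception $(p,q) = (3,5)$ is disposed of by direct inspection of the quotients of the binary icosahedral group $\pi_1(\Sigma_2(T(3,5)))$. In the generic remaining case both source and target fundamental groups are infinite with infinite cyclic centers, and I would argue as in Claim~\ref{claim:isomorphism}: the induced epimorphism $\tilde\varphi\colon \pi_1(\Sigma_2(K_1)) \twoheadrightarrow \pi_1(\Sigma_2(K_2))$ must send the center non-trivially into the center of the target, since otherwise $\tilde\varphi$ would factor through the quotient triangle group $T(2,p,q)$, which is generated by torsion, while $\pi_1(\Sigma_2(K_2))$ is torsion-free by asphericity. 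Consequently $\tilde\varphi$ descends to an epimorphism of triangle groups $\bar\varphi\colon T(2,p,q) \twoheadrightarrow T(2,p',q')$.

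To conclude I would apply Knapp's classification~\cite{Kna68} of two-generator subgroups of $\PSL(2,\mathbb{R})$ generated by elliptic elements to $\bar\varphi(x_2)$ and $\bar\varphi(x_3)$, whose orders divide $p$ and $q$ respectively and which together generate $T(2,p',q')$. Combined with the Euler characteristic inequality of Ron~\cite{Ron92}, this constrains the triple $\{2, p', q'\}$ tightly enough to force $p' \leq p$.

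The main obstacle is this last Knapp analysis. In Claim~\ref{claim:isomorphism} the orders of the rotation generators were assumed to be distinct primes, a hypothesis that did most of the work by forcing each $\alpha_i$ to coincide with one of the $\alpha_j'$. Here $p$ and $q$ are merely coprime, so several of Knapp's cases remain open and must be excluded by combining the order-divisibility conditions on the images of the torsion generators with Ron's inequality, then verifying that no resulting quotient triangle group $T(2, p', q')$ has $p' > p$.
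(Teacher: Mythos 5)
Your reduction to the case of two torus knots is sound and is essentially the paper's: the paper first invokes Theorem~\ref{thm:small}(1) and Lemma~\ref{lem:bridge_finite}, whereas you go through Theorem~\ref{thm:order}(4), the rank bound $\rank\pi_1(\Sigma_2(K_1))\le 2$ and Schubert additivity, but both routes dispose of all non-torus possibilities by $b(K_2)\le 3\le p$ (with the $p=2$ case handled by Theorem~\ref{thm:order}(2)). Likewise your passage from $K_1\succeq K_2$ to an epimorphism of hyperbolic triangle groups $\bar\varphi\colon T(2,p,q)\twoheadrightarrow T(2,p',q')$ (center maps into center because the quotient is generated by torsion while the target is torsion-free) is exactly the paper's step. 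A minor slip: the elliptic exceptions for the source are $(p,q)=(3,4)$ as well as $(3,5)$; both are handled the same way, since a finite orbifold group only surjects onto finite ones, forcing $b(K_2)\le 3$.

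The genuine gap is the final step, which you yourself flag as ``the main obstacle'': you never derive $p'\le p$ from the triangle-group epimorphism, and the tools in the form you propose them are not obviously enough. You would apply Knapp's classification to $\bar\varphi(x_2)$ and $\bar\varphi(x_3)$, whose orders are divisors $d_2\mid p$ and $d_3\mid q$ (not necessarily prime, possibly equal to $2$), supplemented by Rong's Euler-characteristic inequality \cite{Ron92}; but order-divisibility plus Rong's inequality alone do not close the argument (for example, they are numerically consistent with a surjection $T(2,3,38)\twoheadrightarrow T(2,4,9)$, which would contradict the conclusion), so everything rests on a Knapp case analysis that you have not carried out and that is genuinely messier with your choice of generators, since more of Cases (I)--(VII) of \cite{Kna68} survive when neither generator has order $2$ or prime order. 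The paper's proof supplies the missing idea: apply Knapp to the pair consisting of the image of the \emph{order-$2$} generator (which has order exactly $2$, so Cases (V)--(VII) are excluded outright and Case (IV) yields $T(2,3,n)$) and the image of the order-$p$ generator (of order $p_1'$ dividing $p_1$); then every surviving case identifies the target as $T(2,3,n)$ or $T(2,p_1',n)$, giving $p_2\le p_1$ at once, with no appeal to Rong's inequality. As written, your proposal stops exactly where the real work of the proof begins.
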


\begin{proof} 
Since a torus knot is a small knot, $K_2$ is a small knot by Theorem~\ref{thm:small}(1). 
Then $K_2$ is the unknot, a $2$-bridge knot, an elliptic Montesinos knot, or a torus knot with infinite orbifold group by Theorem~\ref{thm:order}(4). 
If $K_2$ is the unknot, a $2$-bridge knot, or an elliptic Montesinos knot, then 
$b(K_2) \leq 3$ and $b(K_1) \geq b(K_2)$ by Lemma~\ref{lem:bridge_finite}.
Hence we are left to consider the case where $K_1$ and $K_2$ are both torus knots with infinite orbifold groups. 

Let us assume that $K_1$ is a $(p_1, q_1)$-torus knot and $K_2$ a $(p_2 ,q_2)$-torus knot with $3 \leq p_1 < q_1$, $3 \leq p_2 < q_2$, $p_1, q_1$ coprime, and $p_2, q_2$ coprime. 
Furthermore, if $p_i = 3$, then $q_i \geq 7$. 
Since $K_1 \succeq K_2$ there is an epimorphism $\varphi\colon G^\orb(K_1) \twoheadrightarrow G^\orb(K_2)$ which, by the proof of Theorem~\ref{thm:order}(4), sends the  infinite cyclic center $Z_1$ of $G^\orb(K_1)$ into the infinite cyclic center $Z_2$ of $G^\orb(K_2)$. 
The quotients $G(K_1) /Z_1 \cong \pi_{1}^\orb (S^2(2, p_1, q_1)) =T(2, p_1, q_1)$ and $G(K_2) /Z_2 \cong \pi_{1}^\orb (S^2(2, p_2, q_2)) =T(2, p_2, q_2)$ are hyperbolic triangle groups since $\frac{1}{2} + \frac{1}{p_i} + \frac{1}{q_i} < 1$ for $i= 1, 2$. 
In particular, they are centerless, and thus the epimorphism $\varphi$ induces an epimorphism $\bar{\varphi} \colon T(2, p_1, q_1) \twoheadrightarrow T(2, p_2, q_2)$.
Like in the proof of Proposition~\ref{prop:minimal_example}, it follows from the presentation of the triangle group $T(2, p_1, q_1)$ that each image $\bar{\varphi}(x)$, $\bar{\varphi}(y)$ and $\bar{\varphi}(z)$ is not trivial, otherwise the image $\bar{\varphi}(T(2, p_1, q_1))$ would be trivial.
Therefore, the non-trivial elements $a = \bar{\varphi}(x)$ and $b = \bar{\varphi}(y)$ satisfies $a^{2} = 1$ and $b^{p'_1} = 1$ with $p'_1$ dividing $p_1$. 
The elements $a$ and $b$ generate the image $\bar{\phi}( T(2, p_1, q_1)) = T(2, p_2, q_2)$ which is a discrete subgroup of $\PSL(2, \R)$ 
since $T(2, p_2, q_2)$ is a hyperbolic triangle group. 
As in the proof of Proposition~\ref{prop:minimal_example}, up to taking suitable powers $u=a^{\pm 1}$ and $v = b^{k}$, $k$ coprime to $p'_1$ to normalize the matrix representatives of $u$ and $v$ in $SL(2,\R)$, it follows that at least one of Cases (I)--(VII) in \cite[Theorem~2.3]{Kna68} holds. 
Since the triangle group $T(2, p_2, q_2)$ is co-compact, Case~(II) is impossible by \cite[Figures~2 and 3]{Kna68}.
Case~(III) cannot happens, otherwise $0 = \tr(u) = \tr(v)$ and $\bar{\varphi}(T(2, p_1, q_1)) = T(2, \ell, n)$ with $\tr(u) = \tr(v) = \cos(\pi - \frac{\pi}{\ell})$ by \cite[Section~3]{Kna68}. 
Then $\ell = 2$, which contradicts $\frac{1}{2} + \frac{1}{\ell} + \frac{1}{n} < 1$ because $T(2, \ell, n) = T(2, p_2, q_2)$ is a hyperbolic triangle group. 
If Case~(IV) happens, then $u$ and $v$ generate a hyperbolic triangle group $T(2, 3, n)$ by \cite[Section~3]{Kna68}. 
Since $T(2, 3, n) = T(2, p_2, q_2)$ with $3 \leq p_2 < q_2$ and $p_2$ coprime to $q_2$, it follows that $p_2= 3$ and $q_2 = n$. 
Therefore,
\[
b(K_1) = \min\{p_1, q_1\} = p_1 \geq 3 = p_2 = \min\{p_2, q_2\} = b(K_2).
\]
Cases~(V)--(VII) do not hold because $u$ is of order $2$.
If Case~(I) occurs, then $u$ and $v$ generate a triangle group $T(2, p'_1, n)$ where $n$ is the order of $uv$. Since $T(2, p'_1, n) = T(2, p_2, q_2)$, then either $p'_1 = p_2$ or $p'_1 = q_2$. 
In any case, $p_2 \leq p'_1 \leq p_1$ since $p_2 < q_2$. 
Therefore, as above, $b(K_1) = \min\{p_1, q_1\} = p_1 \geq p_2 = \min\{p_2, q_2\} = b(K_2)$.
\end{proof}

For a prime link with infinite orbifold group, a positive answer to Question~\ref{ques:bridge_number} would follow from a positive answer to the following question which is a stronger version of the well-known meridional rank conjecture by Cappell and Shaneson~\cite[Problem~1.11]{Kir97} about whether there is an equality between the bridge number $b(L)$ and the meridional rank $m(L)$ of a link $L$.

\begin{question}\label{ques:m2rc}
Let $L$ be a prime link with infinite orbifold group and let $m_2(L)$ denote the minimal number of torsion elements needed to generate $G^\orb(L)$. 
Is it true that $b(L) = m_{2}(L)$?
\end{question}

\begin{remark}\label{rem:order_2}
For a link $L$ as in Question~\ref{ques:m2rc}, $\Sigma_2(L)$ is aspherical by Lemma~\ref{lem:folklore}(3). 
Therefore, a torsion element in $G^\orb (L)$ has order $2$ and is the image of a meridian of $L$.
\end{remark}

\begin{remark}\label{rem:m2} 
If $L, L' \subset S^3$ are two prime links with infinite orbifold groups and $L \succeq L'$, then $m_2(L) \geq m_2(L')$ by definition.
\end{remark}

\begin{lemma}\label{lem:bridge_torsion}
Let $L, L' \subset S^3$ be two prime links with infinite orbifold groups. 
If $L \succeq L'$ and $b(L') = m_2(L')$ then $b(L) \geq b(L')$.
\end{lemma}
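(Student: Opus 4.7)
The plan is to chain together three inequalities: $b(L)\geq m_2(L)$ for any prime link, $m_2(L)\geq m_2(L')$ from the existence of the epimorphism, and $m_2(L')=b(L')$ by hypothesis.

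First I would verify the upper bound $m_2(L)\leq b(L)$. Starting from a bridge presentation of $L$ with $b(L)$ bridges, the Wirtinger-type reading gives a generating set of the link group $G(L)$ consisting of $b(L)$ meridians. Their images in the quotient $G^{\orb}(L)$ are meridians of the orbifold group, and by Remark~\ref{rem:order_2} (since $\Sigma_2(L)$ is aspherical because $L$ is prime with infinite orbifold group, using Lemma~\ref{lem:folklore}(3)) every torsion element of $G^{\orb}(L)$ is a conjugate of a meridian image and has order $2$. Thus we exhibit a generating set of $G^{\orb}(L)$ of size $b(L)$ consisting of order-$2$ torsion elements, which gives $m_2(L)\leq b(L)$.

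Next, the epimorphism $\varphi\colon G^{\orb}(L)\twoheadrightarrow G^{\orb}(L')$ sends torsion elements to torsion elements (or the identity). Applying $\varphi$ to a minimal generating set of $G^{\orb}(L)$ by $m_2(L)$ torsion elements produces a generating set of $G^{\orb}(L')$ of the same size, consisting of torsion elements (discarding any that become trivial). Hence $m_2(L')\leq m_2(L)$, which is exactly the content of Remark~\ref{rem:m2}. Combining with the hypothesis $b(L')=m_2(L')$, we conclude
\[
b(L)\;\geq\;m_2(L)\;\geq\;m_2(L')\;=\;b(L'),
\]
as desired.

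There is no real obstacle here; the main subtlety is just to make sure one is allowed to say ``the image of a meridian is torsion of order $2$'' and that a bridge presentation produces meridional generators, both of which are built into the definitions. The whole point of introducing $m_2$ in Question~\ref{ques:m2rc} is precisely to have an invariant sandwiched between $b(L)$ (from above) and $m_2(L')$ (from $\pi$-domination), so the lemma follows formally once the hypothesis $b(L')=m_2(L')$ is imposed on the target link.
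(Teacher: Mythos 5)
Your proof is correct and follows essentially the same route as the paper: both establish the chain $b(L)\geq m_2(L)\geq m_2(L')=b(L')$, with the first inequality coming from the fact that a minimal bridge presentation yields meridional generators whose images in $G^{\orb}(L)$ are order-$2$ torsion elements (the paper phrases this as $b(L)\geq m(L)\geq m_2(L)$ via Remark~\ref{rem:order_2}), and the second from the epimorphism as in Remark~\ref{rem:m2}.
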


\begin{proof} 
It follows from Remark~\ref{rem:order_2} that $b(L) \geq m(L) \geq m_2(L)$ for such a link. 
In particular, if $L \succeq L'$ and $b(L') = m_2(L')$, then $b(L) \geq m_2(L) \geq m_2(L') =b(L')$.
\end{proof}

It follows from results in \cite{BBK21} and \cite{BBKM23} that the answer to Question~\ref{ques:m2rc} is positive for some special classes of links including Montesinos links, twisted links (see \cite{BBK21} for the definition), and arborescent links associated to bipartite trees with even weights. 
In these articles, it is shown that for such a link, a lower bound on the meridional rank obtained via Coxeter quotients of the link group matches the bridge number of the link. 
A \emph{Coxeter quotient} of a link group $G(L)$ is a Coxeter group $C$ with an epimorphism from $G(L)$ onto $C$ which sends the meridians of $G(L)$ to reflections in $C$. 
The \emph{reflection rank} of a Coxeter group $C$ is the minimal number of reflections needed to generate $C$. 
The \emph{Coxeter rank} $\cox(L)$ of a link $L$ is the maximum of the reflection ranks of the Coxeter quotients of $G(L)$. 
For a prime link with infinite orbifold group, the inequalities 
\[
b(L) \geq m(L) \geq m_2(L) \geq \cox(L)
\]
follow from the proof of Lemma~\ref{lem:bridge_torsion} and \cite[Proposition~1]{BBK21}.
In particular, $b(L) = \cox(L)$ implies that $b(L) = m(L) = m_2(L)$.


\end{document}